\documentclass[11pt]{article}
\usepackage{color}
\usepackage[utf8]{inputenc}
\usepackage{amsmath,amsthm,amssymb,amscd}
\usepackage[all,cmtip]{xy}
\usepackage{booktabs} 
\usepackage[hyperfootnotes=false, colorlinks, linkcolor={blue}, citecolor={magenta}, filecolor={blue}, urlcolor={blue}, plainpages=false, pdfpagelabels]{hyperref}
\usepackage[overload]{textcase}
\usepackage{tikz}
\usetikzlibrary{decorations.pathmorphing}
\usepackage[export]{adjustbox}
\usepackage{tikz-cd}
\usepackage{mathtools}
\usepackage[numbers,sort&compress]{natbib} 
\usepackage{geometry}
\usepackage{multirow}
\allowdisplaybreaks
\nonfrenchspacing
\newcommand{\A}{{\mathbb A}}
\newcommand{\Q}{{\mathbb Q}}
\newcommand{\Z}{{\mathbb Z}}

\newcommand{\C}{{\mathbb C}}

\newcommand{\p}{\mathfrak p}
\newcommand{\OF}{{\mathfrak o}}
\newcommand{\GL}{{\rm GL}}

\newcommand{\SO}{{\rm SO}}
\newcommand{\Sp}{{\rm Sp}}
\newcommand{\GSp}{{\rm GSp}}
\newcommand{\PGSp}{{\rm PGSp}}

\newcommand{\St}{{\rm St}}

\newcommand{\K}[1]{{\rm K}(\p^{#1})}

\newcommand{\ind}{{\rm ind}}

\newcommand{\sym}{{\rm sym}}
\newcommand{\forget}[1]{}

\def\qdots{\mathinner{\mkern1mu\raise0pt\vbox{\kern7pt\hbox{.}}\mkern2mu
\raise3.4pt\hbox{.}\mkern2mu\raise7pt\hbox{.}\mkern1mu}}

\newcommand\blfootnote[1]{%
	\begingroup
	\renewcommand\thefootnote{}\footnote{#1}%
	\addtocounter{footnote}{-1}%
	\endgroup
}

\newtheorem{lemma}{Lemma}[subsection]
\newtheorem{theorem}[lemma]{Theorem}
\newtheorem{corollary}[lemma]{Corollary}

\newtheorem{remark}[lemma]{Remark}

\newtheorem{theorem1}{Theorem}

\title{Paramodular forms coming from elliptic curves}
\author{Manami Roy}
\date{}

\begin{document}
	
\maketitle
\blfootnote{Mathematics Subject Classification: 11F46, 11G07. \\ \hspace*{0.2in } Key words and phrases. Elliptic curves; Local representations; Paramodular forms; Symmetric cube.}

\begin{abstract}
\noindent There is a lifting from a non-CM elliptic curve $E/\Q$ to a paramodular form $f$ of degree $2$ and weight $3$ given by the symmetric cube map. We find the level of $f$ in terms of the coefficients of the Weierstrass equation of $E$. In order to compute the paramodular level, we use the available description of the local representations of $\GL(2,\Q_p)$ attached to $E$ for $p \geq 5$ and determine the local representation of $\GL(2,\Q_3)$ attached to $E$.
\end{abstract}

\tableofcontents

\section*{Introduction}
In this paper, we study certain Siegel cusp forms of degree $2$ with arithmetic significance. In 2007, D.\ Ramakrishnan and F.\ Shahidi \cite{RamakrishnanShahidi2007} proved a transfer from a non-CM elliptic curve $E$ over $\Q$ to a Siegel cusp form $f$ of degree $2$ and weight $3$ such that the spin $L$-function of $f$ is equal to the symmetric cube $L$-function of $E$. We call this transfer the symmetric cube transfer (or the $\sym^3$ lifting); see Section~\ref{Section 1} for details. The goal of this paper is to better understand the Siegel cusp forms coming from this transfer using some recent results which were not available at that time. In order to study Siegel modular forms, it is important to specify a congruence subgroup. Some natural questions are: Which congruence subgroup should we consider to study the Siegel cusp forms coming from elliptic curves? What is the level of a Siegel cusp form obtained by this transfer with respect to a specific congruence subgroup? In this article, we consider Siegel cusp forms with respect to the paramodular group of level $M$,
\begin{equation}
\label{paramodular group}
K(M)= \Sp(4,\Q) \cap \begin{bsmallmatrix} \Z&M\Z&\Z&\Z\\ \Z&\Z&\Z&M^{-1}\Z\\ \Z&M\Z&\Z&\Z\\ M\Z&M\Z&M\Z&\Z\end{bsmallmatrix}.
\end{equation}
A Siegel modular form with respect to the paramodular group of some level is called a \emph{paramodular form}. The following result, which is a consequence of Corollary~\ref{global corollary2}, describes the level of the paramodular form coming from a non-CM elliptic curve over $\Q$ via the $\sym^3$ lifting. We write $\Gamma_{\C}(s)=2(2\pi)^{-s}\Gamma(s)$ with $\Gamma$ being the usual gamma function.
\begin{theorem1}
	\label{theorem in intro}
	Let $E$ be a non-CM elliptic curve over $\Q$ given by the global minimal Weierstrass equation \eqref{W.E of EC} with coefficients in $\Z$ and conductor $N$. Let $\Delta$ be the discriminant attached to the given Weierstrass equation and $v_p$ be the $p$-adic valuation.  Let $\Delta'=3^{-v_3(\Delta)}\Delta$. Suppose that $E$ has good or multiplicative reduction at $p=2$. Then there is a cuspidal paramodular newform $f$ of degree $2$, weight $3$ and level $M$ with the following properties:
	\begin{enumerate}
		\item[(i)]  The level $M$ of $f$ is given by ${\displaystyle M= N\prod_{\substack{p \mid N \\ v_p(\Delta) \not\equiv 0\ \mathrm{mod}\ 4}}p^2}$.
		\item[(ii)]  
		The completed spin $L$-function $L(s,f)$ attached to $f$ is given by 
		\begin{equation*}
		L(s,f)=\Gamma_{\C}\left(s+\frac 32\right)\Gamma_{\C}\left(s+\frac 12\right)\prod_{p<\infty} L_p(s,f),
		\end{equation*}
		where $ L_p(s,f)=L_p(s,E,\sym^3)$ for all places $p$. If $p|N$, then 
		\begin{equation*}
		L_p(s,f)=
		\begin{cases}
		\frac{1}{1-p^{- 3/2-s}}&\text{ if }E \text{ has split multiplicative reduction at } p,\\
		\frac{1}{1+p^{-3/2-s}}&\text{ if }E \text{ has non-split multiplicative reduction at } p,\\
	\frac{1}{(1-\alpha p^{-s})(1-\alpha^{-1} p^{-s})} &\text{ if }   j(E) \in \Z_p\text{ and }v_p(\Delta) \equiv 0\ \mathrm{mod}\ 4,\\
		1 &\text{ otherwise}.
		\end{cases}
		\end{equation*}
			Here, $\alpha$ is an element of $\C^{\times}$ such that $|\alpha|=1$. If the following condition is satisfied
		\begin{equation}
		\label{condition for Euler factor in theorem 1}
	 j(E) \in \Z_p,\ v_p(\Delta) \equiv 0\ \mathrm{mod}\ 4,	\text{ and }
		\left\{\begin{array}{ll}
	    (p-1)v_p(\Delta) \not\equiv 0\ \mathrm{mod}\ 12 &\text{if } p\ge5,\\[1ex]
		\left(\frac {\Delta'}{3}\right)=-1&\text{if } p=3,
		\end{array}\right\}
		\end{equation}
		then $\alpha=i$ (the fourth root of unity).
		\item[(iii)] The Atkin-Lehner eigenvalues of $f$ at the finite places are given by 
		\begin{equation*}
		\eta_p=
		\begin{cases}
		-1 &\text{ if }  p\mid N, \text{ and } E \text{ has split mult.\ red.\ at } p \text{ or satisfies } \eqref{condition for Atkin-Lehener in theorem 1},\\
	w(E/\Q_3)&\text{ if } 3\mid N,\ p=3 \text{ and } E \text{ satisfies } {\rm S}_6^{'} \text{ or } {\rm S}_6^{''},\\
	1 & \text{ otherwise},
		\end{cases}
		\end{equation*}
			where ${\rm S}_6^{'}$, ${\rm S}_6^{''}$ are defined in Table~\ref{IndexNeron} and  the condition \eqref{condition for Atkin-Lehener in theorem 1} is given by \begin{equation}
			\label{condition for Atkin-Lehener in theorem 1}
			j(E) \in \Z_p, \text{ and }
			\left\{\begin{array}{ll}
		v_p(\Delta) \equiv 0\ \mathrm{mod}\ 4, (p-1)v_p(\Delta) \not\equiv 0\ \mathrm{mod}\ 12 &\text{if } p\ge5,\\[1ex]
		v_p(\Delta) \equiv 2\ \mathrm{mod}\ 4, \left(\frac {\Delta'}{3}\right)=-1&\text{if } p=3,
			\end{array}\right\}.
			\end{equation}
	\end{enumerate}
\end{theorem1}
The quantity $w(E/\Q_3)$  appearing in part (iii) of the theorem is the local root number of $E/\Q_3$, one can compute this quantity using Theorem~1.1 of \cite{Kobayashi2002}. The $L$-function $L(s,f)$ in the theorem satisfies the functional equation $L(s,f)=\varepsilon(s,f)L(1-s,f)$, where $\varepsilon(s,f)=-\left(\prod_{p<\infty} \eta_p\right)N^{1/2-s}$.  For example, the elliptic curve with the global minimal Weierstrass equation $y^2+xy+y=x^3-x-2$ has discriminant $\Delta=-2\cdot5^4$. So, by Theorem~\ref{theorem in intro}, there exists a paramodular form $f$ of degree $2$, weight $3$ and level $200$. Moreover, 
$L_5(s,f)=(1+5^{-2s})^{-1}$, $L_2(s,f)=(1+2^{-3/2-s})^{-1}$, $\eta_5=-1$, and $\eta_2=1$. D.\ Yuen and C.\ Poor have some heuristic data on $S_3(K(200))$ and $f$ in the above example appears as one of the non-Gritsenko lift eigenforms in their data.

In order to prove this result, we need to understand the underlying representation theoretic mechanism. Note that attached to every non-CM elliptic curve, there is a cuspidal automorphic representation of $\GL(2,\A_{\Q})$. Similarly, Siegel cusp forms (especially cuspidal paramodular forms) of degree $2$ are related to cuspidal automorphic representations of $\GSp(4,\A_{\Q})$. Theorem~\ref{RS} describes the symmetric cube transfer of cuspidal automorphic representations from $\GL(2,\A_{\Q})$ to $\GSp(4,\A_{\Q})$. We give a new proof of Theorem~\ref{RS} using some recent techniques. Then, we discuss the level of the Siegel modular forms coming from non-CM newforms via the $\sym^3$ transfer. A general description for the level of such paramodular forms is given in Section~\ref{Section 1} using Corollary~\ref{RS-classical} and Table~\ref{gerenal conductor of sym3}. To find a complete description of the paramodular form $f$ as in Theorem~\ref{theorem in intro}, we consider local representations attached to $f$; in this sense our work is similar to the work of B.\ Roberts and J.\ Johnson-Leung \cite{JohnsonRoberts2012} who lift Hilbert modular forms to paramodular forms. 

We concentrate on elliptic curves over $\Q$ to get a simple formula for the level of some paramodular forms coming from the $\sym^3$ lifting. To calculate the paramodular level attached to an elliptic curve $E/\Q$, it is necessary to know the local representations of $\GL(2,\Q_p)$ attached to $E$. Finding a definite characterization of the local representations of $\GL(2,K)$ associated to elliptic curves over a non-archimedean local field $K$ of characteristic zero is of independent interest; we discuss this in Section~\ref{Section 2}. Particularly, this article treats the case of residual characteristic $3$, i.e., we find the local representations of $\GL(2,K)$ attached to elliptic curves with additive but potentially good reduction over a non-archimedean local field $K$ of characteristic zero and residual characteristic $3$. 

Finally, using the results from Section~\ref{Section 1} and \ref{Section 2}, we find the level of the paramodular forms coming from non-CM elliptic curves over $\Q$ via the $\sym^3$ lifting in Section~\ref{Section 3}. Dummigan, Martin, and Watkins \cite{DummiganMartinWatkins2009} computed the Euler factors and local Atkin-Lehner signs for the symmetric $n$th-power $L$-function of $E$ by analyzing the Galois representations at each prime $p$. We have reproduced these local quantities for the $\sym^3$ case in Section~\ref{sym3 of local reprepesentations}.
\section{Siegel modular forms obtained by the \texorpdfstring{${\rm sym^3}$}{} lifting}
\label{Section 1}
There is an action of $\GL(2,\C)$ on the space of homogeneous polynomials of degree $3$ in $\C[S,T]$ given by
\begin{equation*}
\begin{bsmallmatrix}a &b\\c&d\end{bsmallmatrix}\cdot f(S,T)= f(aS+cT,bS+dT).
\end{equation*}
This action defines a four-dimensional irreducible representation of $\GL(2,\C)$ and induces a map from $\GL(2,\C)$ to $\GL(4,\C)$. After conjugating by a suitable element of $\GL(4,\C)$, we may assume that the image of the map lies in $\GSp(4,\C)$. This map is known as the symmetric cube map, denoted as {${\rm sym^3}$}, and given by

\begin{align}
\label{sym^3map}
\sym^3: \GL(2,\C) &\rightarrow  \GSp(4,\C) \nonumber \\
\begin{bmatrix}a &b\\c&d\end{bmatrix} &\mapsto\begin{bmatrix}a^3 &a^2b&ab^2&-\frac 13 b^3\\3a^2c&2abc+a^2d&2abd+b^2c&-b^2d\\
3ac^2&2acd+bc^2&2bcd+ad^2&-bd^2\\-3c^3&-3c^2d&-3cd^2&d^3\end{bmatrix}.
\end{align}
Here, $\GSp(4)$ is the group of symplectic similitudes, defined by
\begin{equation*}
\GSp(4)=\left\{g \in \GL(4):{}^tgJg=\lambda(g)J \text{ for some } \lambda(g) \in \GL(1)  \right\} ,
\end{equation*}
where $J$ is the symplectic form given by the matrix $J=\begin{bsmallmatrix} &&&1\\&&1&\\&-1&&\\-1&&&\end{bsmallmatrix}$. We choose this particular symplectic form for simplicity of calculations over the local fields.

\subsection{Representation theoretic aspect of the \texorpdfstring{${\rm sym^3}$}{} lifting}
Here, a ``lifting" means a functorial lift according to {\it the Langlands principle of functoriality}, a central conjecture in the Langlands program that describes the relationships between automorphic objects living on two different algebraic groups. Let $G, H$ be two (split) reductive algebraic groups defined over $\Q$. Attached to the groups $G$ and $H$ are their dual groups $\hat{G}$ and $\hat{H}$, which are complex reductive Lie groups whose root systems are dual to those of $G$ and $H$ respectively. By the principle of functoriality, every homomorphism of Lie groups $\hat{G} \rightarrow \hat{H}$ should give rise to a ``lifting" of automorphic representations of $G(\A_{\Q})$ to automorphic representations of $H(\A_{\Q})$ that connects their $L$-functions. Now, we have the map $\sym^3: \widehat{\GL(2)}=\GL(2,\C) \rightarrow \widehat{\GSp(4)}=\GSp(4,\C)$. So the principle of functoriality predicts that an automorphic representation of $\GL(2,\A_{\Q})$ should ``lift" to an automorphic representation of $\GSp(4,\A_{\Q})$, i.e., the following diagram should hold:
\begin{equation}
\label{Functoriality for sym3}
\left\{ \begin{tikzcd}\GL(2,\C) \arrow[r, "\sym^3"]& \GSp(4,\C) \end{tikzcd}\right\} \textcolor{blue}{\xRightarrow[\text{Functoriality}]{\text{Langlands}}} \Bigg\{ \begin{tikzcd}\\[-0.33 in] \text{auto.\ rep.\ of} &\text{auto.\ rep.\ of}\\[-0.33 in] \GL(2,\A_{\Q}) \arrow{r}{\sym^3}[swap]{\text{lifting}}&\GSp(4,\A_{\Q})\end{tikzcd}\Bigg\}.
\end{equation}
Ramakrishnan and Shahidi \cite{RamakrishnanShahidi2007} proved the following lifting from a cuspidal automorphic representation $\GL(2,\A_{\Q})$ to a cuspidal automorphic representation $\GSp(4,\A_{\Q})$.
\begin{theorem}[Ramakrishnan-Shahidi, 2007]
	\label{RS}
	Let $\pi=\bigotimes_{p}\pi_p$ be a cuspidal automorphic representation of $\GL(2,\A_{\Q})$ defined
	by a holomorphic, non-CM newform $\phi$ of even weight $k \geq 2$ and level N with trivial central character. Then there exists a cuspidal automorphic representation $\Pi=\bigotimes_{p}\Pi_p$ of $\GSp(4,\A_{\Q})$ with trivial central character, which is unramified at any prime 
	p not dividing N, such that
	\begin{enumerate}
		\item[(i)] Each non-archimedean component $\Pi_p$ is generic, with its parameter being $\sym^3$ of the  parameter of $\pi_p$.
		\item[(ii)] $\Pi_{\infty}$ is a holomorphic discrete series representation, with its parameter being $\sym^3$ of the archimedean parameter of $\pi$.
		\item[(iii)] $L(s,\Pi)=L(s, \pi, \sym^3)$.
	\end{enumerate}
\end{theorem}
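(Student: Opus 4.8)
\medskip
\noindent\textbf{Proof strategy.}
The plan is to realise $\Pi$ as the descent to $\GSp(4,\A_\Q)$ of the $\GL(4)$-valued symmetric cube of $\pi$. I would first invoke the Kim--Shahidi symmetric cube to obtain an isobaric automorphic representation $\tau:=\sym^3\pi$ of $\GL(4,\A_\Q)$. Since $\phi$ is non-CM of weight $k\ge2$, the representation $\pi$ is not dihedral, and it is not of tetrahedral or octahedral type either (such $\pi$ come from weight-one forms by Deligne--Serre); hence $\tau$ is cuspidal. As the central character of $\pi$ is trivial, $\pi^\vee\cong\pi$, so $\tau^\vee\cong\tau$, and the similitude character attached to \eqref{sym^3map} pulls the central character of $\GSp(4)$ back to $\omega_\pi^3=\triv$, so $\tau$ has trivial central character as well.

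The next step is to determine the type of $\tau$. Restricting $\GL(4,\C)$-representations along $\sym^3\colon\SL(2,\C)\hookrightarrow\GL(4,\C)$ gives $\wedge^2\circ\sym^3\cong\sym^4\oplus\triv$, hence the factorisation $L(s,\tau,\wedge^2)=L(s,\pi,\sym^4)\,\zeta(s)$. By Kim's symmetric fourth power lift, $L(s,\pi,\sym^4)$ is the standard $L$-function of an isobaric automorphic representation of $\GL(5,\A_\Q)$ with no trivial constituent (again because $\pi$ is non-CM), so it is holomorphic and non-vanishing at $s=1$ by Jacquet--Shalika; therefore $L(s,\tau,\wedge^2)$ has a simple pole at $s=1$, i.e.\ $\tau$ is a self-dual cuspidal representation of $\GL(4,\A_\Q)$ of \emph{symplectic} type.

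Finally I would descend $\tau$ to $\GSp(4,\A_\Q)$ through Arthur's endoscopic classification. Via the standard embedding $\widehat{\GSp(4)}=\GSp(4,\C)\hookrightarrow\GL(4,\C)$, the parameter $\psi=\tau$ is of general type; the classification for $\GSp(4)$ (equivalently, since the central character is trivial, for $\PGSp(4)\cong\SO(5)$, together with similitude refinements such as those of Gee--Taïbi) then says that the associated global packet is non-empty, that it consists of the restricted tensor products of members of the local packets, and that --- $\psi$ being of general type, so that the global component group is trivial --- every such product occurs in the \emph{cuspidal} spectrum with multiplicity one. I would take $\Pi_p$ to be the generic (Whittaker-normalised) member of the local $L$-packet for every finite $p$, and $\Pi_\infty$ to be the holomorphic discrete series in the archimedean $L$-packet attached to $\sym^3$ of the archimedean parameter of $\pi$ (a packet that also contains a generic discrete series, so that genericity is imposed only at the finite places). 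By the multiplicity formula, $\Pi:=\Pi_\infty\otimes\bigotimes_{p<\infty}\Pi_p$ is then a cuspidal automorphic representation of $\GSp(4,\A_\Q)$ with trivial central character, unramified outside $N$. Compatibility of the classification with the local Langlands correspondence at each place (Gan--Takeda in the non-archimedean case, Langlands--Shelstad in the archimedean case) forces the $\GSp(4,\C)$-parameter of $\Pi_v$ to be $\sym^3$ of the parameter of $\pi_v$ everywhere, which is (i) and (ii); and since the spin $L$-factor of $\Pi_v$ is, by definition, the $L$-factor of its parameter composed with the four-dimensional standard representation of $\GSp(4,\C)$ --- i.e.\ of $\sym^3$ of the parameter of $\pi_v$ --- we obtain $L(s,\Pi)=L(s,\pi,\sym^3)$ place by place, including the archimedean factor; this is (iii).

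The heart of the argument is the descent step: one must cite the endoscopic classification for $\GSp(4)$ itself, extract from the multiplicity formula for general-type parameters that the specific \emph{mixed} choice --- holomorphic at $\infty$, generic at all finite places --- lies in the cuspidal spectrum, and check that the local components produced carry exactly the $\sym^3$ parameters. Pinning down the holomorphic discrete series inside the archimedean $L$-packet of $\sym^3$ of the archimedean parameter of $\pi$ and confirming that its weight and spin $\Gamma$-factor come out as expected is the most delicate of the otherwise routine verifications; the steps upstream of the descent --- cuspidality of $\sym^3\pi$ and the symplectic-type computation --- are comparatively soft.
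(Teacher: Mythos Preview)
Your proposal is correct and follows essentially the same route as the paper: lift to $\GL(4)$ via Kim--Shahidi, establish that the lift is self-dual of symplectic type through the exterior-square factorisation $L(s,\tau,\wedge^2)=L(s,\pi,\sym^4)\zeta(s)$, and then descend via Arthur's classification for $\SO(5)\cong\PGSp(4)$, selecting the generic member at finite places and the holomorphic discrete series at infinity. The paper packages the last two steps by citing Proposition~1.2.1 and Lemma~1.3.1 of \cite{Ralf2018}, whereas you spell out the multiplicity-one argument for general-type parameters and invoke Gan--Takeda directly, but the substance is the same.
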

\noindent Using some results of James Arthur \cite{Arthur2013} and Ralf Schmidt \cite{Schmidt2018} that were not available at the time, we can now shorten the proof of Theorem~\ref{RS} as follows:
\begin{proof}
By the functorial symmetric cube transfer for cuspidal automorphic representations of $\GL(2,\A_{\Q})$ in \cite{KimShahidi2002}, we get a unitary cuspidal automorphic representation $\mu=\otimes_p \mu_p$ of $\GL(4,\A_{\Q})$ with trivial central character from $\pi$ such that $ L(s,\mu)=L(s, \pi, \sym^3)$. Also, $\mu$ has the following properties:
\begin{itemize}
	\item $\mu$ is symplectic, i.e., the exterior square $L$-function $L(s,\mu,\Lambda^2)$ has a pole at $s=1$. This is true because of the well known identity $L(s,\mu,\Lambda^2)=L(s,\pi,\sym^4)\zeta(s)$ and the fact that $L(s,\pi,\sym^4)$ has no zero at $s=1$. 
	\item $\mu$ is self-dual. This follows from the identity $L(s,\mu\times\mu)=L(s,\mu,\Lambda^2)L(s,\mu,\sym^2)$ and the facts that $L(s,\mu,\sym^2)$ has no zero at $s=1$ and the Rankin-Selberg $L$-function has a pole at $s=1$ if and only if $\mu$ is isomorphic to its contragredient $\mu^{\vee}$.
\end{itemize}
These two properties are explained in detail in \cite{RamakrishnanShahidi2007}. Now, since $\mu$ is a self-dual, symplectic, unitary, cuspidal automorphic representation of $\GL(4,\A_{\Q})$, $\psi=\mu \boxtimes 1$ is an Arthur parameter of general type for the group $\SO(5)$. Then by Arthur's classification in \cite{Arthur2013}, there is a packet $\Pi_\psi$ of cuspidal automorphic representations of $\SO(5,\A_{\Q})$ with $\psi$ as the Arthur parameter. Since $\SO(5)$ and $\PGSp(4)$ are isomorphic as algebraic groups, a representation of $\SO(5,\A_{\Q})$ can be viewed as a representation of $\GSp(4,\A_{\Q})$ with trivial central character. Furthermore, using Theorem~1.1 of \cite{Schmidt2018}, it follows that there exists an element $\Pi\cong\otimes\Pi_p$ in the packet $\Pi_\psi$ such that each non-archimedean local representation $\Pi_p$ is a generic representation and $\Pi_{\infty}$ is a holomorphic discrete series representation. We choose such a representation $\Pi$ in $\Pi_\psi$. So, we get a cuspidal automorphic representation $\Pi$ of $\GSp(4,\A_{\Q})$ with trivial central character which satisfies properties (i) and (ii) of Theorem~\ref{RS}. Also, it is evident from Lemma~1.2 of \cite{Schmidt2018} that the spin $L$-function of $\Pi$ coincides with the standard $L$-function $ L(s,\mu)$, i.e., $L(s,\Pi)=L(s,\mu)=L(s, \pi, \sym^3)$. 
\end{proof}
Moreover, this $\sym^3$ lifting (\ref{Functoriality for sym3}) is functorial at each place since the local Langlands correspondence is true for $\GL(2)$ and $\GSp(4)$. So, we can specifically study the ``local components" of the $\sym^3$ lifting, i.e., for an irreducible and admissible representation $\pi_p$ of $\GL(2,\Q_p)$, we can consider the representations $\Pi_p$ of $\GSp(4,\A_{\Q_p})$ whose $L$-parameter is the same as the $L$-parameter of $\mu_p$. For the purposes of this paper, it is important to study the ``local $\sym^3$ lift" at each place. 
\subsection{Level of the Siegel modular forms coming from the \texorpdfstring{${\rm sym^3}$}{} lifting}
\noindent For a general reference on Siegel modular forms in this article, we use \cite{Schmidt2017}. We are interested in a classical version of Theorem~\ref{RS} which associates a Siegel modular form to a non-CM cuspidal newform via the $\sym^3$ lifting. Moreover, we want to understand the level of the Siegel modular forms obtained by this lifting.  Ramakrishnan and Shahidi considered the principal congruence subgroup level, but principal congruence subgroups are not ideally suited for a correspondence between cuspidal automorphic representations of $\GSp(4,\A_{\Q})$ and Siegel modular forms. 
Instead we consider Siegel modular forms with respect to the paramodular group as in \eqref{paramodular group}.
There is a well understood connection between paramodular forms and cuspidal automorphic representations of $\GSp(4,\A_{\Q})$, and there is a nice newform theory for paramodular forms (see \cite{RobertsSchmidt2006,RobertsSchmidt2007}). These facts were not available at the time Ramakrishnan and Shahidi proved the result on the $\sym^3$ lifting. Now, using Theorem \ref{RS} and the paramodular newform theory we get the following corollary:
\begin{corollary}
\label{RS-classical}
Let $\phi$ be a non-CM cuspidal newform of even weight $k \geq 2$ and level N with trivial central character. Let $\pi=\bigotimes_{p}\pi_p$ be the cuspidal automorphic representation of $\GL(2,\A_{\Q})$ associated to $\phi$. Then there exists a vector valued cuspidal paramodular newform $f$ of weight ${\rm det}^{k+1}\times \sym^{k-2}$ and level equal to the conductor $a(\sym^3(\pi))$ of $\sym^3(\pi)$ such that $L(s,f)=L(s, \phi, \sym^3)$.
\end{corollary}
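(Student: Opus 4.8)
The plan is to deduce Corollary~\ref{RS-classical} from Theorem~\ref{RS} by translating the statement about automorphic representations of $\GSp(4,\A_\Q)$ into the language of classical paramodular forms, using the dictionary provided by the paramodular newform theory of Roberts--Schmidt. First I would apply Theorem~\ref{RS} to the cuspidal automorphic representation $\pi = \bigotimes_p \pi_p$ attached to $\phi$, obtaining a cuspidal automorphic representation $\Pi = \bigotimes_p \Pi_p$ of $\GSp(4,\A_\Q)$ with trivial central character, with each finite $\Pi_p$ generic, $\Pi_\infty$ a holomorphic discrete series representation, and $L(s,\Pi) = L(s,\pi,\sym^3)$. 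The archimedean parameter of $\pi$ attached to a weight-$k$ newform determines, via $\sym^3$, the infinitesimal/Harish-Chandra parameter of $\Pi_\infty$; a routine computation with the $\sym^3$ map on the discrete series parameter identifies $\Pi_\infty$ as the holomorphic (vector-valued) discrete series of $\GSp(4,\R)$ whose associated automorphy factor is $\det^{k+1}\otimes\sym^{k-2}$ (for $k=2$ this is the scalar weight $3$ case relevant to the elliptic curve application).

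Next I would invoke the local paramodular newform theory: for a generic irreducible admissible representation $\Pi_p$ of $\GSp(4,\Q_p)$ with trivial central character, the space of paramodular vectors of level $\p^n$ is nonzero for $n$ sufficiently large, and the minimal such $n$ is the paramodular conductor, which by the Roberts--Schmidt theory equals (the $p$-part of) the conductor $a(\Pi_p)$ of the $L$-parameter; moreover the space of newvectors at that level is one-dimensional. Taking the product over all finite $p$ (almost all factors being unramified, hence contributing $p^0$, since $\Pi_p$ is unramified outside $N$), the representation $\Pi$ has a one-dimensional space of paramodular newvectors of level $M = a(\sym^3(\pi)) = \prod_p p^{a(\Pi_p)}$. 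The strong multiplicity one / newform-theory package then guarantees that the adelic newvector, together with the choice of a holomorphic discrete series vector of lowest weight at infinity, corresponds under the standard isomorphism between automorphic forms on $\GSp(4,\A_\Q)$ and vector-valued Siegel modular forms to a single cuspidal paramodular newform $f$ of weight $\det^{k+1}\otimes\sym^{k-2}$ and level $M$. The cuspidality of $f$ follows from the cuspidality of $\Pi$.

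Finally, for the $L$-function identity, I would note that the classical spin $L$-function $L(s,f)$ is by definition the spin $L$-function $L(s,\Pi)$ attached to the automorphic representation (this is part of the paramodular dictionary, since $f$ is an eigenform for the relevant Hecke algebra and its Hecke eigenvalues encode the Satake parameters of $\Pi_p$ at good primes, with the bad-prime Euler factors determined by the local newform theory). Combining with Theorem~\ref{RS}(iii) gives $L(s,f) = L(s,\Pi) = L(s,\pi,\sym^3) = L(s,\phi,\sym^3)$.

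The main obstacle is the bookkeeping at the archimedean place and at the ramified primes: one must verify that the particular holomorphic discrete series $\Pi_\infty$ produced by Theorem~\ref{RS} really does correspond to the weight $\det^{k+1}\otimes\sym^{k-2}$ and not some other member of the $L$-packet or a different cohomological weight, and that the local conductors $a(\Pi_p)$ computed via the $\sym^3$ of the local parameters genuinely match the paramodular levels in the Roberts--Schmidt sense (including the subtlety that the paramodular conductor is defined via the epsilon-factor/parameter, so one needs the local Langlands correspondence for $\GSp(4)$ together with the compatibility of the newform theory with it). These facts are available in the literature (the Roberts--Schmidt monograph for the $p$-adic statement and the standard theory of holomorphic discrete series for the archimedean one), so the argument is essentially an assembly of known pieces; the delicate point is simply making sure the normalizations line up so that ``conductor of $\sym^3(\pi)$'' on the representation side equals ``paramodular level'' on the classical side.
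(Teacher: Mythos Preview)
Your proposal is correct and follows essentially the same route as the paper's proof: both start from Theorem~\ref{RS}, apply the Roberts--Schmidt paramodular newform theory to the generic local components $\Pi_p$ (existence of paramodular vectors, minimal level equal to the conductor $a(\Pi_p)$, one-dimensionality at that level), assemble the global newform at level $\prod_p p^{a(\Pi_p)}=a(\sym^3(\pi))$, and read off the weight $\det^{k+1}\otimes\sym^{k-2}$ from the archimedean parameter via the $\sym^3$ map. The paper is slightly more specific in citing Theorem~4.4.1, Theorem~7.5.4 and Corollary~7.5.5 of \cite{RobertsSchmidt2007} and the passage to classical forms via \cite{Ralf2017}, but your outline identifies exactly these ingredients.
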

\begin{proof}
By Theorem~\ref{RS}, $\pi$ lifts to a cuspidal automorphic representation $\Pi=\sym^3(\pi)$ of $\GSp(4,\A_{\Q})$ with trivial central character. Moreover, $\Pi$ is a representation of type {\bf(G)} (see  \cite{Schmidt2018}) such that each non-archimedean component $\Pi_p$ is generic and the archimedean component $\Pi_{\infty}$ is a holomorphic discrete series representation. Then, by Theorem~4.4.1 from \cite{RobertsSchmidt2007}, there exists a paramodular vector of minimal level at each non-archimedean place. Now, recall from Theorem~7.5.4 and Corollary~7.5.5 of \cite{RobertsSchmidt2007} that the minimal paramodular level of $\Pi_p$ at each finite place $p$ is the conductor $a(\Pi_p)$ of $\Pi_p$, and the dimension of the space of paramodular vectors at the minimal level is $1$. Since $a(\Pi)=\prod_{p} p^{a(\Pi_p)}$, we get a paramodular newform $f$ of level $a(\Pi)=a(\sym^3(\pi))$ associated to the automorphic representation $\Pi$ of $\GSp(4,\A_{\Q})$ such that $L(s,f)=L(s,\Pi)$ (see \cite{Schmidt2017}). Using part (ii) of Theorem~\ref{RS} we get $L(s,f)=L(s,\Pi)=L(s, \pi, \sym^3)=L(s, \phi, \sym^3)$.
Also, one can easily see that the weight of $f$ is ${\rm det}^{k+1}\times \sym^{k-2}$ by looking at the local parameter at the archimedean place and using the map (\ref{sym^3map}) (see \cite{Schmidt2017}). This concludes the proof.
\end{proof}
\begin{remark}
In order to construct holomorphic Siegel modular forms, we choose the non-archimedean components of $sym^3(\pi)$ to be generic and the archimedean components of $sym^3(\pi)$ to be holomorphic discrete series in Theorem~\ref{RS} and Corollary~\ref{RS-classical}. We can switch between generic and non-generic in each component since $sym^3(\pi)$ is of type {\bf(G)}. We consider $\sym^3(\pi_p)$ as the $L$-packet on $\GSp(4,\Q_p)$ whose $L$-parameter is the symmetric cube of the $L$-parameter of $\pi_p$. But, when we study global results in Section~\ref{Section global results}, we make the specific choice of local representations in the L-packet as in Theorem~\ref{RS}.
\end{remark}
We want to find {\it a formula for the level} of the paramodular form $f$ in terms of the data of the given newform $\phi$. Since $a(\sym^3(\pi))=\prod_{p} p^{a(\sym^3(\pi_p))}$, we need to calculate $a(\sym^3(\pi_p))$ for each local representation $\pi_p$ of $\GL(2,\Q_p)$ attached to $\phi$.
\subsection{Local \texorpdfstring{${\rm sym^3}$}{} lifting}
\label{local sym3 section}
Let $K$ be a non-archimedean field of characteristic zero with residual characteristic $p$. Let $\OF_K$ be the ring of integers of $K$, and let $\p$ be the maximal ideal of $\OF_K$. Let $k=\OF_K/\p$ be the residue field of $K$ of order $q$. Let $\Phi \in {\rm Gal}(\bar{K}/K)$ be an inverse Frobenius element, i.e., $\Phi$ induces the inverse of the map $x \rightarrow x^q$ on the residue class field extension $\bar{k}/k$. Let $K^{\text{un}}$ be the maximal unramified extension of $K$ inside $\bar{K}$. Let $W(\bar{K}/K)$ be the Weil group of $K$. Then by definition
\begin{equation}
	W(\bar{K}/K)=\bigsqcup_{n\in \Z} \Phi^n I_K,
\end{equation}
where $I_K$ is the inertia group, which can be identified with the Galois group ${\rm Gal}(\bar{K}/K^{\text{un}})$. A representation of $W(\bar{K}/K)$ is a continuous homomorphism $\varphi:W(\bar{K}/K) \rightarrow \GL(V)$, where $V$ is a finite-dimensional complex vector space. Let $\omega$ be the one-dimensional representation of $W(\bar{K}/K)$ with the property $\omega(I_K)=1$ and $\omega(\Phi)=q^{-1}$. A pair $(\varphi,N)$ is called a ``Weil-Deligne representation" if $\varphi$ is a representation of $W(\bar{K}/K)$ and $N$ is a nilpotent endomorphism of $V$ such that $\varphi(g) N \varphi(g)^{-1}=\omega(g)N$. For general reference in this section, see \cite{Rohrlich1994}. By the local Langlands correspondence, there is a one-to-one correspondence between the irreducible admissible representations $\pi$ of $\GL(2,K)$ and the two-dimensional Weil-Deligne representations $(\varphi,N)$. We refer to $(\varphi,N)$ as the local parameter of $\pi$. Now, there are three types of infinite dimensional irreducible admissible representations of $\GL(2,K)$: 
\begin{enumerate}
\item The principal series representations $\chi_1 \times \chi_2$, where $\chi_1,\chi_2$ are characters of $K^{\times}$. The local parameter is given by
\begin{equation}
\label{localpara for PS}
\varphi(w)= \begin{bmatrix}
 \chi_1(w)&\\
 &\chi_2(w)
 \end{bmatrix}, w \in W(\bar{K}/K)
\text{ and } N=0.
 \end{equation}
\item The (twisted) Steinberg representations $\chi \text{St}_{\GL(2,K)}$, where $\chi$ is a character of $K^{\times}$. The local parameter is given by
\begin{equation} 
\label{localpara for St}
\varphi(w)= \begin{bmatrix}
|w|^{\frac 12}\chi(w)&\\
&|w|^{-\frac12}\chi(w)
\end{bmatrix}, w \in W(\bar{K}/K)
\text{ and }
N=\begin{bmatrix}
0&1\\
0&0
\end{bmatrix}.
\end{equation}
\item The supercuspidal representations. The local parameter is an irreducible representation of $W(\bar{K}/K)$ with $N=0$.
\end{enumerate}
Here we identify the characters of $W(\bar{K}/K)$ and $K^{\times}$ via the Artin isomorphism. For $p \ge 3$, \emph{every} supercuspidal representation of $\GL(2,K)$ has a local parameter of the form
\begin{equation}
\label{local parameter of D.S.C}
\varphi=\ind_{W(\bar{K}/F)}^{W(\bar{K}/K)}(\xi) \text{ with } \xi\neq \xi^{\sigma},
\end{equation}
where $F$ is a quadratic extension of $K$ and $\xi$ is  a character of $W(\bar{K}/F)$. Here, $\sigma$ is the nontrivial element in $W(\bar{K}/K)\setminus W(\bar{K}/F)$ and the conjugate character $\xi^{\sigma}$ of $W(\bar{K}/F)$ is defined by $\xi^{\sigma}(x)=\xi(\sigma x \sigma^{-1}),\ x \in W(\bar{K}/F)$. With respect to a suitable basis, $\varphi$ has the following matrix form,
\begin{equation}
\label{local parameter for sc}
\varphi(x)=\begin{bmatrix}\xi(x) & \\ &\xi^{\sigma}(x)\end{bmatrix},\quad x \in W(\bar{K}/F), \qquad \text{ and }\qquad \varphi(\sigma)=\begin{bmatrix}& 1\\ \xi(\sigma^2)&\end{bmatrix}.
\end{equation}
The representation of $\GL(2,K)$ with a local parameter as in (\ref{local parameter of D.S.C}) is called a \emph{dihedral} supercuspidal representation of $\GL(2,K)$ and we denote it by $\omega_{F, \xi}$, where $\xi$ is a character of $F^{\times}$ corresponding to the character $\xi$ of $W(\bar{K}/F)$ via the Artin isomorphism. The conjugate character $\xi^{\sigma}$ of $W(\bar{K}/F)$ corresponds to a character $\xi^{\sigma}$ of $F^{\times}$ defined by $\xi^{\sigma}(x)=\xi(\sigma(x))$ for $x \in F^{\times}$. 
We have the following formula for the conductor $a(\varphi)$ of $\varphi$ (see VI.2 of \cite{Serre1979}),
\begin{equation}
\label{general_conductor_of_sc_parameter}
a(\varphi)=\text{dim}(\xi)d(F/K)+f(F/K)a(\xi),
\end{equation}
where $\text{dim}(\xi)=1$, $\xi$ being a character; $d(F/K)$ is the discriminant of the field extension $F/K$; $f(F/K)$ is the residue class degree, and $a(\xi)$ is the conductor of $\xi$, i.e., $a(\xi)$ is the smallest $n \in \mathbb{N}$ such that $\xi|_{1+\p^n\OF_F}=1$. Hence, for residue characteristic of $K$ odd, we have
\begin{equation}
\label{conductor_of_sc}
a(\varphi)=\begin{cases}
2a(\xi)\qquad &\text{ if }  F/K \text{ is unramified}\\
1+a(\xi)\qquad &\text{ if }  F/K \text{ is ramified}.
\end{cases}
\end{equation}
The following is a very useful fact about the dihedral supercuspidal representations of $\GL(2,K)$ with trivial central character. We will use this remark quite often in this article.
\begin{remark}
\label{remark that xi sigma=xi inverse}
The central character of the dihedral supercuspidal representation $\omega_{F, \xi}$ of $\GL(2,K)$ is 
$\xi|_{K^{\times}}\cdot \chi_{F/K}$,
where $\chi_{F/K}$ is the quadratic character of $K^{\times}$ associated to the quadratic extension $F/K$ such that $\chi_{F/K}\left({N_{F/K}(F^{\times})}\right)=1$. Here $N_{F/K}$ is the norm map from $F^{\times}$ to $K^{\times}$. If $\omega_{F, \xi}$ has trivial central character, i.e., $\xi|_{K^{\times}}\cdot \chi_{F/K}=1$, then by
evaluating $\xi$ at $N_{F/K}(y)$ for any $y \in F^{\times}$, we get $\xi^{\sigma}=\xi^{-1}$ on $F^{\times}$.
\end{remark}
For a character $\chi$ of $K^{\times}$, the smallest non-negative integer $n$ such that $\chi|_{1+\p^n\OF_K}=1$ is called the \emph{conductor} of $\chi$ and denoted by $a(\chi)$. Also, we denote the $L$-packet with the local parameter $\ind_{W(\overline{\Q}_p/F)}^{W(\overline{\Q}_p/\Q_p)}(\xi)\oplus\ind_{W(\overline{\Q}_p/F)}^{W(\overline{\Q}_p/\Q_p)}(\xi^3)$ as $\omega_{F,\xi^3} \oplus \omega_{F,\xi}$, where $F$ is a quadratic extension of $\Q_p$. When $p \ge 3$, using the local parameters in \eqref{localpara for PS}, \eqref{localpara for St}, \eqref{local parameter of D.S.C}, and the $\sym^3$ map in (\ref{sym^3map}), we describe of the conductor $a(\sym^3(\pi_p))$ of $\sym^3(\pi_p)$ in Table~\ref{gerenal conductor of sym3} for the local representations $\pi_p$ of $\GL(2,\Q_p)$ with trivial central character.
\begin{table}
\caption {Conductor of $\sym^3(\pi_p)$ for local representations $\pi_p$ of $\GL(2,\Q_p)$.}
\label{gerenal conductor of sym3}
\renewcommand{\arraystretch}{1.3}
\renewcommand{\arraycolsep}{.16cm}
\[ \begin{array}{ccccc} 
\toprule
\pi_p&\sym^3(\pi_p)&\text{Cond.\ on } \pi_p&a(\pi_p)&a(\sym^3(\pi_p))\\
\toprule
\chi \times \chi^{-1}& \chi^4\times\chi^2\rtimes\chi^{-3}&&a(\chi) + a(\chi^{-1})&2a(\chi^{-3})+a(\chi)+a(\chi^{-1})\\  
\midrule
\chi \text{St}_{\GL(2,\Q_p)}&   \chi^3 \text{St}_{\GSp(4,\Q_p)}&\chi \text{ is ram.}&2a(\chi) &4a(\chi^3)\\  
\cmidrule{3-5}
\chi^2=1&&\chi \text{ is unr.} &1&3\\  
\midrule
\omega_{F,\xi}& \omega_{F,\xi^3} \oplus \omega_{F,\xi}&F/{\Q_p} \text{ is unr.}&2a(\xi)&2a(\xi^3)+2a(\xi)\\    
\cmidrule{3-5}
&&F/{\Q_p} \text{ is ram.}&a(\xi)+1&a(\xi^3)+a(\xi)+2\\
\bottomrule
 \end{array}\]
\vspace*{-0.2in} 
 \end{table}

Notice that we need a description of the character involved in the $L$-parameter of $\pi_p$ in order to calculate $a(\sym^3(\pi_p))$ which involves the conductor of characters like $a(\chi^3)$ and $a(\xi^3)$.  So, the global question of finding the paramodular level coming from the $\sym^3$ lifting leads to the local question of determining the representation of $\GL(2,\Q_p)$ associated to a cusp form.	In principle the paper \cite{LoefflerWeinstein2012} contains an algorithm to determine the local representations attached to a modular form. However, the local parameter of the output of the algorithm is not always obvious. So, we consider a more specified problem, namely finding the level of the paramodular forms coming from elliptic curves through the $\sym^3$ lifting. Now, we need to find a detailed description of the local representation $\pi_p$ of $\GL(2,\Q_p)$ from the given elliptic curve $E$ over $\Q_p$. We will discuss this in the next section.
\section{Representations attached to elliptic curves over a local field}
\label{Section 2}
In this section we describe the representations associated to elliptic curves over a $p$-adic field. For elliptic curves with potentially multiplicative reduction, one can find the associated representations for any $p$-adic field. If an elliptic curve has additive but potentially good reduction over a local field with residual characteristic $\ge 5$, the associated representation is also known. The main focus of this section is to find a detailed description of the representations attached to elliptic curves with additive but potentially good reduction over a local field of residual characteristic $3$.
\subsection{Elliptic curves over local fields and their reduction types}
Let $K$ be a non-archimedean local field of characteristic zero with residual characteristic $p$. Let $\OF_K$ be the ring of integers of $K$, and let $\p$ be the maximal ideal of $\OF_K$. Let $\varpi_K$ be a uniformizer of $K$. Suppose that $v: K \rightarrow \Z$ is the normalized valuation on $K$, and $k=\OF_K/\p$ is the residue field of $K$ of order $q$. As before, let $\Phi \in {\rm Gal}(\bar{K}/K)$ be an inverse Frobenius element, $K^{\text{un}}$ be the maximal unramified extension of $K$ inside $\bar{K}$, and $I_K$ be the inertia group. Let $E$ be an elliptic curve over $K$ given by a Weierstrass equation of the form
\begin{equation}
\label{W.E of EC}
y^2+a_1xy+a_3y=x^3+a_2x^2+a_4x+a_6
\end{equation}
with the coefficients in $\OF_K$. The discriminant $\Delta$, the $j$-invariant $j(E)$, and the constants $c_4$, $c_6$ are the usual constants attached to (\ref{W.E of EC}) given as follows:
\begin{equation}
\begin{split}
\label{relations between WC}
b_2&=a_1^2+4a_2,\quad b_4=2a_4+a_1a_3,\\
b_6&=a_3^2+4a_6,\quad b_8=a_1^2a_6+4a_2a_6-a_1a_3a_4+a_2a_3^2-a_4^2,\\
c_4&=b_2^2-24b_4,\quad c_6=-b_2^3+36b_2b_4-216b_6,\\
\Delta&=-b_2^2b_8-8b_4^3-27b_6^2+9b_2b_4b_6=\frac{c_4^3-c_6^2}{1728},\\
j(E)&=\frac{c_4^3}{\Delta}.
\end{split}
\end{equation}
Given the minimal Weierstrass equation of $E/K$, we have the following standard results:
\begin{align*}
E \text{ has good (stable) reduction } \quad&\text{ if and only if } \quad v(\Delta)=0.\\
E \text{ has multiplicative (semistable) reduction }\quad&\text{ if and only if } \quad v(\Delta)>0 \text{ and } v(c_4)=0.\\
E \text{ has additive (unstable) reduction }\quad&\text{ if and only if }  \quad v(\Delta)>0 \text{ and } v(c_4)>0.\\
E \text{ has potentially multiplicative reduction }\quad&\text{ if and only if } \quad j(E) \not\in \OF_K.\\
E \text{ has potentially good reduction }\quad&\text{ if and only if } \quad j(E) \in \OF_K.
\end{align*}
If we assume $E/K$ has potentially multiplicative reduction, then $c_4$ and $c_6$ are non-zero since $j(E) \not\in \OF_K$. Then we define the $\gamma$-invariant of $E/K$ by
\begin{equation}
\gamma(E/K)=-\frac{c_4}{c_6} \in K^{\times}/K^{\times2}.
\end{equation}
This quantity is well defined and independent of the choice of the Weierstrass equation. Let $\sigma_E$ be the Weil-Deligne representation associated to $E$ (see sections 13-15 in \cite{Rohrlich1994}), which corresponds to an irreducible admissible representation $\pi_E$ of $\GL(2,K)$ by the local Langlands correspondence. We want a description of the representation $\pi_E$ in terms of the Weierstrass coefficients of $E$. When $E$ has potentially multiplicative reduction, we have the following result from \cite{Rohrlich1994}. 
\begin{theorem}
\label{potentially multiplicative}
Let $E/K$ be an elliptic curve with potentially multiplicative reduction, i.e., $j(E) \not\in \OF_K$. Then the $\GL(2,K)$ representation $\pi_E$ associated to $E$ is given by
$$\pi_E=(\gamma(E/K),\cdot)\St_{\GL(2,K)},$$
where $(\gamma(E/K),\cdot)$ is the quadratic character of $K^{\times}$ defined by the Hilbert symbol $(\cdot,\cdot)$. Furthermore, we have one of the following cases:
\begin{itemize}
	\item $(\gamma(E/K),\cdot)$ is trivial if and only if $E/K$ has split multiplicative reduction.
	\item $(\gamma(E/K),\cdot)$ is non-trivial and unramified if and only if  $E/K$ has non-split multiplicative reduction.
	\item $(\gamma(E/K),\cdot)$ is ramified if and only if  $E/K$ has additive reduction.
\end{itemize}
\end{theorem}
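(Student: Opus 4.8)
The plan is to reduce the statement to the Tate curve by a quadratic twist and then transport the computation through the local Langlands correspondence.

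\textbf{Reduction to the Tate curve and identification of the twist.} Since $j(E)\notin\OF_K$ we have $v(j(E))<0$, so inverting the $q$-expansion $j=q^{-1}+744+\cdots$ produces a unique $q\in K^{\times}$ with $v(q)=-v(j(E))>0$ and $j(E_q)=j(E)$, where $E_q/K$ is the Tate curve; it has split multiplicative reduction over $K$. Because $j(E)\neq0,1728$, any two elliptic curves over $K$ with that $j$-invariant are quadratic twists of one another, so $E\cong_K E_q^{(d)}$ for a unique class $d\in K^{\times}/K^{\times2}$, where $E^{(d)}$ denotes the quadratic twist of $E$ by $d$. The first key point is that $d$ coincides with $\gamma(E/K)$: under a quadratic twist $c_4(E^{(d)})=d^2 c_4(E)$ and $c_6(E^{(d)})=d^3 c_6(E)$, so $\gamma(E_q^{(d)}/K)=d^{-1}\gamma(E_q/K)$; and for the Tate curve $-c_4(E_q)/c_6(E_q)$ is a square in $K^{\times}$ (this is automatic for $p$ odd, since it is a unit $\equiv 1$ modulo $\p$, and for $p=2$ one uses $16\mid240$ and $8\mid504$ to see that $-c_4(E_q)/c_6(E_q)\in1+\p^{2e+1}\subseteq(\OF_K^{\times})^{2}$ with $e=v(2)$). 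Hence $\gamma(E/K)\equiv d\pmod{K^{\times2}}$.

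\textbf{The representation attached to $E_q$, and twisting.} The Tate uniformization $E_q(\bar{K})\cong\bar{K}^{\times}/q^{\Z}$ gives a nonsplit exact sequence $0\to\Q_{\ell}(1)\to V_{\ell}(E_q)\to\Q_{\ell}\to0$ of Galois modules, so the Weil--Deligne representation $\sigma_{E_q}$ is the special representation ($N\neq0$), which corresponds under the local Langlands correspondence to $\St_{\GL(2,K)}$; thus $\pi_{E_q}=\St_{\GL(2,K)}$. A quadratic twist multiplies the Weil--Deligne representation by the quadratic character of $W(\bar{K}/K)$ attached to $d$ via local class field theory, which is precisely the Hilbert-symbol character $(d,\cdot)$; since the local Langlands correspondence is compatible with character twists, $\pi_E=\pi_{E_q^{(d)}}=(d,\cdot)\cdot\St_{\GL(2,K)}=(\gamma(E/K),\cdot)\St_{\GL(2,K)}$, which is the asserted formula.

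\textbf{The trichotomy.} It remains to match the square class $d=\gamma(E/K)$ with the reduction type. The character $(d,\cdot)$ is trivial iff $d\in K^{\times2}$ iff $E\cong_K E_q$ iff $E$ has split multiplicative reduction; it is nontrivial and unramified iff $d$ is a non-square unit times a square, i.e.\ $E$ acquires split multiplicative reduction exactly over the unramified quadratic extension, which is the non-split multiplicative case (an unramified quadratic twist preserves multiplicative reduction and interchanges split with non-split); and it is ramified iff $E$ becomes multiplicative only over a ramified quadratic extension, which forces $E/K$ itself to have additive reduction (a ramified quadratic twist of a curve with multiplicative reduction has additive reduction). These last equivalences are standard consequences of Tate's algorithm.

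The part I expect to require the most care is the $2$-adic square-class bookkeeping in the first step --- pinning down $\gamma(E_q/K)\in K^{\times2}$ and distinguishing the ramified and unramified cases when $p=2$ --- together with fixing the normalization of the local Langlands correspondence so that split multiplicative reduction yields $\St_{\GL(2,K)}$ on the nose rather than an unramified twist of it; for $p$ odd everything is clean. Alternatively, one can simply invoke Sections~13--15 of \cite{Rohrlich1994}, where exactly this analysis is carried out.
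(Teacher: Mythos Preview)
Your argument is correct and is essentially the standard Tate-curve/quadratic-twist approach carried out in Rohrlich's article. Note, however, that the paper does not give its own proof of this theorem: it simply quotes the result from \cite{Rohrlich1994} (Sections~13--15), which you yourself reference at the end as the alternative. So there is nothing to compare against beyond observing that your sketch faithfully reproduces the cited source.
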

\subsection {The residual characteristic \texorpdfstring{$\ge 5$}{} case}
When $E$ has additive but potentially good reduction and the residual characteristic of $K$ is greater than or equal to $5$, the following results from \cite{Turki2015} describe the representation $\pi_E$.

\begin{theorem}
\label{principal series}
Let $E/K$ be an elliptic curve with additive but potentially good reduction, i.e., $j(E) \in \OF_K$. Assume that $(q-1)v(\Delta) \equiv 0\ ( \text{mod }12)$ and $e=\frac{12}{{\rm gcd}(v(\Delta),12)}$. Then the $\GL(2,K)$ representation $\pi_E$ associated to $E$ is a principal series representation, i.e., $\pi_E=\chi \times \chi^{-1}$, where $\chi$ is a character of $K^{\times}$ satisfying the following properties:
	\begin{enumerate}
		\item[(i)] The conductor $a(\chi)$ of $\chi$ is 1, i.e., $\chi$ is trivial on $1+\p\OF_K$,
		\item[(ii)] $\chi|_{W_{q-1}}$ is trivial on the index $e$-subgroup $W_{q-1}^e$, where $W_{q-1}$ is the group of $(q-1)$th root of unity,
	    \item[(iii)] The character of ${W_{q-1}}/{W_{q-1}^e} \cong \Z/e\Z$ induced by $\chi$ has order $e$.
	    \end{enumerate}
    Furthermore, there is a unique such character when $e=2$, and there are exactly two such characters when $e \in \lbrace 3,4,6\rbrace$ which are inverses of each other.
\end{theorem}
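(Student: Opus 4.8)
The plan is to argue entirely on the Galois side, with the Weil--Deligne representation $\sigma_E$ attached to $E$ (cf.\ sections 13--15 of \cite{Rohrlich1994}), and then transport the conclusions to $\pi_E$ through the local Langlands correspondence for $\GL(2,K)$, which respects twisting, conductors and central characters. First I would record two structural facts forced by potentially good reduction $j(E)\in\OF_K$: the monodromy operator of $\sigma_E$ vanishes, $N=0$ (so $\pi_E$ is not a twist of Steinberg), and the inertia group $I_K$ acts on $V_\ell(E)$ (for $\ell\neq p$) through a finite quotient. Since $p\ge5$ does not divide the order of the automorphism group of the reduced curve, this action is tamely ramified, so the image of $I_K$ is cyclic. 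Here I would invoke the N\'eron--Ogg--Shafarevich criterion together with the reduction theory of elliptic curves (Tate's algorithm / the list of Kodaira types): for $p\ge5$ and additive potentially good reduction, $E$ acquires good reduction exactly over the totally ramified cyclic extension $L=K(\varpi_K^{1/e})$ with $e=12/\gcd(v(\Delta),12)\in\{2,3,4,6\}$. Thus $\sigma_E|_{I_K}$ is a representation of the finite cyclic group $I_K/I_L\cong\Z/e\Z$, hence diagonalizable, $\sigma_E|_{I_K}=\psi_1\oplus\psi_2$ with $\psi_i$ tame of order dividing $e$; since $\det\sigma_E$ is (a twist of) the $\ell$-adic cyclotomic character, which is unramified at $p$, we get $\psi_1\psi_2=1$ on $I_K$, and since the image of $I_K$ has order exactly $e$, the character $\psi:=\psi_1$ has order exactly $e$ and $\sigma_E|_{I_K}=\psi\oplus\psi^{-1}$.

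The decisive step --- where the hypothesis $(q-1)v(\Delta)\equiv0\pmod{12}$ enters --- is to show that $\sigma_E$ is \emph{reducible} as a representation of the full Weil group, equivalently that $\pi_E$ is a principal series and not a (dihedral) supercuspidal. The hypothesis is equivalent to $e\mid q-1$: with $d=\gcd(v(\Delta),12)$ one has $12\mid(q-1)v(\Delta)\iff(12/d)\mid(q-1)$ (since $v(\Delta)/d$ is prime to $12/d$), and $12/d=e$. Now Frobenius conjugation acts on tame characters of $I_K$ by $\psi\mapsto\psi^{q}$ and must permute the eigensystem $\{\psi,\psi^{-1}\}$ of $\sigma_E|_{I_K}$; the only options are $\psi^{q}=\psi$ ($e\mid q-1$) and $\psi^{q}=\psi^{-1}$ ($e\mid q+1$). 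Our case is $\psi^{q}=\psi$, so Frobenius fixes each of the two $I_K$-eigenlines of $\sigma_E$ (these are distinct for $e\ge3$; for $e=2$, $\sigma_E|_{I_K}$ is scalar, so $\sigma_E=\tilde\psi\otimes\rho$ with $\tilde\psi$ extending $\psi$ and $\rho$ unramified two-dimensional, hence reducible directly). Therefore $\sigma_E=\chi_1\oplus\chi_2$ splits into $W_K$-stable lines with $\chi_i|_{I_K}=\psi^{\pm1}$; normalizing $\sigma_E$ so that $\pi_E$ has trivial central character (i.e.\ $\det\sigma_E$ trivial) forces $\chi_2=\chi_1^{-1}$, so $\pi_E=\chi\times\chi^{-1}$ with $\chi=\chi_1$. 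This principal series is irreducible as a $\GL(2,K)$-representation: in the unitary normalization $\chi$ is a unitary character, so $\chi/\chi^{-1}=\chi^2$ is unitary and cannot equal $|\cdot|^{\pm1}$. (In the complementary case $\psi^{q}=\psi^{-1}$, Frobenius swaps the two eigenlines, $\sigma_E$ is irreducible, and $\pi_E$ is the dihedral supercuspidal $\omega_{F,\xi}$ --- exactly the case excluded by the hypothesis.)

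It remains to extract (i)--(iii) and the counting statement. Since $\chi|_{I_K}=\psi$ is tame, $\chi$ is trivial on $1+\p\OF_K$, i.e.\ $a(\chi)=1$; this is (i). For (ii)--(iii) I would use local class field theory to identify $W_{q-1}=\mu_{q-1}(K)\subset\OF_K^{\times}$ with a set of Teichm\"uller representatives for the tame inertia quotient: the composite $W_{q-1}\hookrightarrow\OF_K^{\times}\to I_K^{\mathrm{ab}}\xrightarrow{\psi}\C^{\times}$ has image $\mu_e$ and kernel exactly $W_{q-1}^{e}$ (using $e\mid q-1$, so that $W_{q-1}^{e}$ is the subgroup of index $e$), whence $\chi|_{W_{q-1}}$ is trivial on $W_{q-1}^{e}$ and induces a faithful --- hence order-$e$ --- character of $W_{q-1}/W_{q-1}^{e}\cong\Z/e\Z$. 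Finally, (ii)--(iii) determine $\chi|_{\OF_K^{\times}}$ up to the choice of a faithful character of $\Z/e\Z$, of which there is exactly one when $e=2$ and exactly two --- mutually inverse --- when $e\in\{3,4,6\}$; combined with the fact that $\chi\ne\chi^{-1}$ once $e\ge3$ (then $\chi|_{\OF_K^{\times}}$ has order $\ge3$), this gives the asserted uniqueness/count.

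I expect the principal obstacle to be the reduction-theoretic input of the first paragraph: proving that the image of inertia is tame, cyclic, of order precisely $12/\gcd(v(\Delta),12)$, and that this value lies in $\{2,3,4,6\}$, rests on a base-change analysis of minimal Weierstrass models (or on Tate's algorithm and the classification of Kodaira types) and is the step most sensitive to the hypotheses $p\ge5$ and additive potentially good reduction. A secondary technical point is consistency of normalizations: the argument giving $\chi_2=\chi_1^{-1}$ uses self-duality of $\sigma_E$ together with the unitary normalization making the central character trivial, and the class-field-theoretic bookkeeping behind (ii)--(iii) must keep a consistent choice of Frobenius throughout.
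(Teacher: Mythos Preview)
The paper does not give its own proof of this theorem; it is quoted from \cite{Turki2015}. That said, your proposal is correct and follows the standard route. It also closely parallels the proof the paper \emph{does} give for the residual-characteristic-$3$ analogue (Theorem~\ref{thm1}): there the paper cites Kraus for the structure of the inertia image $\Lambda$, invokes Kobayashi's criterion for $W(L/K)$ to be abelian, and then applies the ``abelian image $\Leftrightarrow$ reducible'' principle (Remark~\ref{ableian-reducible}). Your argument makes the same moves but more explicitly at the level of tame characters: the equivalence $(q-1)v(\Delta)\equiv0\pmod{12}\iff e\mid q-1$ is precisely the condition that Frobenius conjugation fixes rather than swaps the two $I_K$-eigenlines, which is the concrete content of ``$W(L/K)$ abelian'' in this tamely ramified situation. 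What your version buys is that the dichotomy principal series vs.\ supercuspidal (Theorem~\ref{principal series} vs.\ Theorem~\ref{super cuspidal}) becomes visibly a single Frobenius computation.

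One small point worth making explicit: in your treatment of the case $e=2$ you need Frobenius-semisimplicity of $\sigma_E$ to split the unramified two-dimensional factor $\rho$ as a direct sum; this holds for Weil--Deligne representations attached to elliptic curves, but it is an input, not a formality.
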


\begin{theorem}
\label{super cuspidal}
	Let $E/K$ be an elliptic curve with additive but potentially good reduction, i.e., $j(E) \in \OF_K$. Assume that $(q-1)v(\Delta) \not\equiv 0 \ ( \text{mod }12)$ and $e=\frac{12}{{\rm gcd}(v(\Delta),12)}$. Then the corresponding $\GL(2,K)$ representation $\pi_E$ is a dihedral supercuspidal representation, i.e., $\pi_E=\omega_{F,\xi}$, where $F$ is the unramified quadratic extension of $K$ and $\xi$ is a character of $F^{\times}$ satisfying the following properties:
	\begin{enumerate}
		\item[(i)] The conductor $a(\xi)$ of $\xi$ is 1, i.e., $\xi$ is trivial on $1+\p\OF_{F}$,
		\item[(ii)] $\xi|_{\OF_{F}^{\times}}$ has order $e$,
		\item[(iii)] $\xi(\varpi_F)=-1$. Here, $\varpi_F$ is a uniformizer of $F$ chosen to be in $K$.
	\end{enumerate}
Furthermore, there are exactly two such characters which are Galois conjugates of each other for each $e$.
\end{theorem}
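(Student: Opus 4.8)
The plan is to read off $\pi_E$ from the Weil--Deligne representation $\sigma_E=(\varphi,N)$ attached to $E$ via the local Langlands correspondence. Since $E$ has potentially good reduction it acquires good reduction over a finite extension $L/K$, and because the residual characteristic is $\ge 5$ one may take $L/K$ to be tamely (in fact cyclically) ramified of degree $e=12/\gcd(v(\Delta),12)$; this is the standard structure theory for elliptic curves with potentially good reduction over a $p$-adic field with $p\ge 5$ (cf.\ \cite{Turki2015} and the analysis underlying Theorem~\ref{principal series}). In particular $N=0$, and $\varphi|_{I_K}$ factors through the procyclic tame quotient with image cyclic of order $e$. Since $\det\varphi$ is unramified (being a power of the cyclotomic character), $\varphi|_{I_K}$ is diagonalizable with $\varphi|_{I_K}=\psi\oplus\psi^{-1}$ for a tame character $\psi$ of $I_K$ of order exactly $e$. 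One checks that $(q-1)v(\Delta)\not\equiv 0\ (\mathrm{mod}\ 12)$ is equivalent to $e\nmid q-1$ (using $\gcd\bigl(v(\Delta)/\gcd(v(\Delta),12),\,e\bigr)=1$); in particular $e\ge 3$, so $\psi\neq\psi^{-1}$ and $\varphi|_{I_K}$ is multiplicity free, with distinct eigenlines $V_\psi,V_{\psi^{-1}}$.

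Next I would track the action of an inverse Frobenius $\Phi$. Since $I_K$ is normal in $W(\bar K/K)$, the element $\Phi$ permutes the two eigenlines $V_\psi,V_{\psi^{-1}}$, and on tame characters $\Phi$-conjugation is the $q$-power map; hence $\Phi$ fixes these eigenlines if and only if $e\mid q-1$. By hypothesis $e\nmid q-1$, so $\Phi$ \emph{swaps} $V_\psi$ and $V_{\psi^{-1}}$ (equivalently $\psi^q=\psi^{-1}$). Therefore $\varphi$ is irreducible as a representation of $W(\bar K/K)$, so $\pi_E$ is supercuspidal; moreover the stabilizer of $V_\psi$ in $W(\bar K/K)$ is the index-two subgroup generated by $I_K$ and $\Phi^2$, which equals $W(\bar K/F)$ for $F/K$ the \emph{unramified} quadratic extension. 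Writing $\xi$ for the character of $W(\bar K/F)$ (equivalently, via the Artin map, of $F^\times$) through which it acts on $V_\psi$, we obtain $\sigma_E=\ind_{W(\bar K/F)}^{W(\bar K/K)}(\xi)$, i.e.\ $\pi_E=\omega_{F,\xi}$.

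It then remains to verify (i)--(iii) and the count. For (i): $\xi$ is tame, so trivial on $1+\p\OF_F$, giving $a(\xi)\le 1$, while $\xi|_{I_F}=\psi$ has order $e\ge 3$, so $\xi$ is ramified and $a(\xi)=1$ (equivalently, combine $a(\sigma_E)=2$ with the unramified case of \eqref{conductor_of_sc}). For (ii): under the reciprocity isomorphism $\xi|_{\OF_F^\times}$ corresponds to $\xi|_{I_F}=\psi$, hence has order $e$. For (iii): $\pi_E=\omega_{F,\xi}$ has trivial central character, so by Remark~\ref{remark that xi sigma=xi inverse} $\xi|_{K^\times}=\chi_{F/K}$; as $F/K$ is unramified, $\chi_{F/K}$ is the unramified quadratic character of $K^\times$, so with $\varpi_F=\varpi_K$ we get $\xi(\varpi_F)=\chi_{F/K}(\varpi_K)=-1$. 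Finally, a character $\xi$ with $a(\xi)\le 1$ is determined by $\xi(\varpi_F)$ together with $\xi|_{\OF_F^\times}$, the latter factoring through the cyclic group $\OF_F^\times/(1+\p\OF_F)\cong\mathbb{F}_{q^2}^\times$; requiring $\xi(\varpi_F)=-1$ and $\xi|_{\OF_F^\times}$ of order exactly $e$ leaves precisely two choices for each $e\in\{3,4,6\}$, and these differ by the Galois action $x\mapsto x^q$ on $\mathbb{F}_{q^2}^\times$, which permutes the order-$e$ characters nontrivially exactly because $e\nmid q-1$; hence they are Galois conjugates (and both yield the same representation $\omega_{F,\xi}$).

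The main obstacle is the input used in the first paragraph: that for $p\ge 5$ the extension over which $E$ attains good reduction is tame of degree exactly $e=12/\gcd(v(\Delta),12)$, so that $\varphi|_{I_K}$ is tame cyclic of that order. This is the source both of the restriction on the residual characteristic and of the parameter $e$, and it rests on the structure of the N\'eron model and of the automorphism group of the reduced curve in characteristic $\ge 5$, together with the computation of $e$ from the minimal discriminant; I would invoke it from \cite{Turki2015}, noting that it is the same structural fact underlying Theorem~\ref{principal series}. Everything after that is soft: Clifford theory for the index-two subgroup $W(\bar K/F)$, the conductor formula \eqref{conductor_of_sc}, and the local class field theory dictionary relating ramified characters of $F^\times$ to characters of $I_F$.
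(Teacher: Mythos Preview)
Your proof is correct. Note, however, that the paper does not give its own proof of this theorem: it is quoted directly from \cite{Turki2015} as background for the residual-characteristic-$3$ analysis. The closest comparison is with the paper's proof of the analogous Theorem~\ref{thm2} (residual characteristic $3$), and there the structure is essentially the same as yours---show $\sigma_E$ is irreducible, then apply Clifford theory for the index-$2$ subgroup $W(\bar K/F)$ to write $\sigma_E=\ind_{W(\bar K/F)}^{W(\bar K/K)}(\xi)$, and read off the properties of $\xi$ from the central character and the order of $\sigma_E|_{I_K}$.

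The difference is in how irreducibility and the choice of $F$ are established. You work concretely: diagonalize $\varphi|_{I_K}=\psi\oplus\psi^{-1}$ using tameness, note that conjugation by $\Phi$ is the $q$-power map on tame characters, and translate the hypothesis $(q-1)v(\Delta)\not\equiv 0\pmod{12}$ into $e\nmid q-1$, which forces $\Phi$ to swap the eigenlines; irreducibility and $F=K^{\rm un}\cap(\text{quadratic})$ fall out immediately. The paper's argument for Theorem~\ref{thm2} is more abstract: it invokes Kraus's computation of $\Lambda$, Kobayashi's criterion for when $W(L/K)$ is abelian, Remark~\ref{ableian-reducible} to deduce irreducibility, and Lemma~\ref{index2subgroup} for the induction. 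Your route is slightly more self-contained for $p\ge 5$ (it uses only the tameness of $L/K^{\rm un}$ and elementary facts about Frobenius on tame inertia), whereas the paper's route generalizes more readily to wild ramification since it does not rely on the tame quotient being procyclic. Both approaches use the same endgame (Remark~\ref{remark that xi sigma=xi inverse} for $\xi(\varpi_F)=-1$, and the count via $\varphi(e)=2$).
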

In the next subsection, we will prove two theorems analogous to Theorem~\ref{principal series} and Theorem~\ref{super cuspidal} for residual characteristic $3$. For residual characteristic greater than $3$, the representation $\pi_E$ coming from $E$ is given in terms of $j(E)$ and $v(\Delta)$. When the residual characteristic of $K$ is equal to $3$, we describe the representation $\pi_E$ attached to $E/K$ in terms of $v(c_4)$, $v(c_6)$, $v(\Delta)$, and some conditions on the underlying field $K$.
\subsection {The residual characteristic \texorpdfstring{$3$}{} case}
Assume that the residual characteristic of $K$ is $3$ and $v(3)=1$. In Table~\ref{IndexNeron}, we define a list of conditions on $E$ in terms of the quantities $v(c_4)$, $v(c_6)$, and $v(\Delta)$. 
This table is reproduced from Table~II of \cite{Papadopoulos1993}. For $i=2$ or $5$, we define a condition ${\rm R}_i$ on $E/K$ by
\begin{equation}
\label{additional condition on EC}
{\rm R}_i:\quad x^3-3c_4x-2c_6\equiv 0 \ {\rm mod}\  (27\varpi_K^i)  \text{ for some } x \in \OF_{K}. 
\end{equation}
If an elliptic curve $E/K$ does not satisfy the condition ${\rm R}_i$, then we denote it by ``non ${\rm R}_i$" in Table~\ref{IndexNeron}. When $K=\Q_3$, $v(c_4)\ge2$, and $v(c_6)=3$, the condition ${\rm R}_2$ is equivalent to
\begin{equation}
(c_6/3^3)^2+2\equiv c_4/3 \ \rm{mod}\  9.
\end{equation}
When $K=\Q_3$, $v(c_4)\ge4$, and $v(c_6)=6$, the condition ${\rm R}_5$ is equivalent to
\begin{equation}
(c_6/3^6)^2+2\equiv c_4/3^3 \ \rm{mod}\  9.
\end{equation}
There are also some conditions on the underlying field $K$ in Table~\ref{IndexNeron}, called ``reducibility condition", that determines whether the Galois representation attached to $E$ is reducible or irreducible. The exponent $v( N)$ of the conductor $N$ of the elliptic curve $E$ appears on the last column of Table~\ref{IndexNeron}. The list of conditions does not depend on $v( N)$. In fact $v( N)$ can be determine in terms of the quantities $v(c_4)$, $v(c_6)$, and $v(\Delta)$. For each condition in Table~\ref{IndexNeron}, we will describe the $\GL(2,K)$ representation $\pi_E$ associated to $E$. The following lemma ensures that this way we get all the possible $\GL(2,K)$ representations associated to an elliptic curve $E/K$ with additive but potentially good reduction.  

Here we assume $v(3)=1$ for simplicity of the conditions in Table~\ref{IndexNeron}. One can also determine the $\GL(2,K)$ representations associated to elliptic curves over $K$ when $v(3)> 1$, the conditions in Table~\ref{IndexNeron} will be more complicated in that case.

\begin{lemma} 
Suppose that $E$ is an elliptic curve over $K$ given by a minimal Weierstrass equation of the form (\ref{W.E of EC}) with the coefficients in $\OF_K$. Let $\Delta$ be the discriminant, and $c_4$, $c_6$ be the usual constants attached to the equation (\ref{W.E of EC}) as defined in (\ref{relations between WC}). Then the following statements are equivalent:
\begin{enumerate}
\item[(i)] $E$ has additive but potentially good reduction.
\item[(ii)] $E$ satisfies one and only one of the conditions in Table~\ref{IndexNeron}.
\end{enumerate}
\end{lemma}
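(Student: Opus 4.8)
The statement is an "if and only if" which reduces to a classification exercise in the Kodaira–Néron theory of elliptic curves over a $3$-adic field. The plan is to organize the proof around Tate's algorithm, which classifies the reduction type of $E$ in terms of the valuations of the Weierstrass coefficients, and to exhibit the conditions in Table~\ref{IndexNeron} precisely as the branches of Tate's algorithm that produce additive but potentially good reduction when the residual characteristic is $3$ and $v(3)=1$. Concretely, I would first recall the two standard criteria already stated in the excerpt: $E$ has additive reduction iff $v(\Delta)>0$ and $v(c_4)>0$, and $E$ has potentially good reduction iff $j(E)=c_4^3/\Delta\in\OF_K$. Combining these, condition (i) is equivalent to the pair of inequalities $v(\Delta)>0$, $v(c_4)>0$, together with $3v(c_4)\ge v(\Delta)$ (so that $v(j)\ge0$). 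So the whole lemma amounts to showing that the list in Table~\ref{IndexNeron} is exactly the list of $(v(c_4),v(c_6),v(\Delta))$-profiles (refined by the reducibility condition, which does not affect the reduction type) compatible with these three inequalities and realizable by a \emph{minimal} Weierstrass equation.

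\textbf{Key steps.} First I would invoke minimality: since the equation \eqref{W.E of EC} is minimal, we cannot have $v(c_4)\ge4$ and $v(c_6)\ge6$ simultaneously (otherwise the substitution $x\mapsto 3^2x$, $y\mapsto 3^3 y$ would lower the discriminant); this bounds the cases. Second, I would run through Tate's algorithm in residual characteristic $3$: the additive cases are Kodaira types $\mathrm{II},\mathrm{III},\mathrm{IV},\mathrm{I}_0^*,\mathrm{I}_n^*\,(n\ge1),\mathrm{IV}^*,\mathrm{III}^*,\mathrm{II}^*$, and the potentially-multiplicative additive case $\mathrm{I}_n^*$ with $n\ge1$ corresponds exactly to $j\notin\OF_K$, hence must be excluded. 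I would check that each remaining Kodaira type corresponds to one (or, after splitting by the reducibility/root-number condition, two) of the rows ${\rm S}_1,\dots,{\rm S}_6'',\dots$ of Table~\ref{IndexNeron}, by matching the defining inequalities on $v(c_4),v(c_6),v(\Delta)$. Third, for the converse direction (ii)$\Rightarrow$(i), I would observe that each row of the table satisfies $v(\Delta)>0$, $v(c_4)>0$ and $3v(c_4)\ge v(\Delta)$ by inspection, giving additive and potentially good reduction; and that the rows are visibly pairwise mutually exclusive, giving the "one and only one" clause. Since the table presumably also records the reducibility condition, I should note that this condition partitions a given valuation-profile but never changes membership in class (i), so it is harmless for this lemma.

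\textbf{Main obstacle.} The genuine work is bookkeeping: verifying that Tate's algorithm in residual characteristic $3$, with $v(3)=1$, produces \emph{precisely} the inequalities appearing in Table~\ref{IndexNeron}, with no overlaps and no omissions. The subtle point is type $\mathrm{I}_0^*$, where potentially good versus potentially multiplicative both occur: one must carefully separate the subcase $j\notin\OF_K$ (excluded) from $j\in\OF_K$ (included), and translate this into the valuation inequality $3v(c_4)\ge v(\Delta)$ versus $3v(c_4)<v(\Delta)$; a parallel care is needed for the higher $\mathrm{I}_n^*$. The other place requiring attention is confirming that the "exactly one" exclusivity holds on the nose — i.e., that the defining inequalities of distinct rows carve up the relevant region of $\Z_{\ge0}^3$ without intersection — which is a finite but somewhat tedious case check. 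I expect no conceptual difficulty beyond this, since everything rests on the established reduction-type criteria recalled just before the lemma together with Tate's algorithm, which I may assume.
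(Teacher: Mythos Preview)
Your approach is essentially the paper's: both reduce the equivalence to matching the additive, potentially good Kodaira--N\'eron types against the valuation profiles listed in Table~\ref{IndexNeron}. The paper does this in one stroke by citing Table~II of Papadopoulos, which already tabulates the $(v(c_4),v(c_6),v(\Delta))$ data for each additive type in residue characteristic $3$, rather than re-running Tate's algorithm by hand.

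Two technical slips in your sketch are worth flagging. First, type $\mathrm{I}_0^*$ is \emph{always} potentially good; the dichotomy potentially good versus potentially multiplicative among additive types is between $\{\mathrm{II},\mathrm{III},\mathrm{IV},\mathrm{I}_0^*,\mathrm{IV}^*,\mathrm{III}^*,\mathrm{II}^*\}$ and $\{\mathrm{I}_n^*:n\ge1\}$, not a subcase split inside $\mathrm{I}_0^*$. Second, your minimality bound ``we cannot have $v(c_4)\ge4$ and $v(c_6)\ge6$ simultaneously'' is false in residue characteristic $3$: Table~\ref{IndexNeron} itself contains minimal rows with $v(c_4)\ge6$, $v(c_6)=8$, $v(\Delta)=13$ (type $\mathrm{II}^*$ under $\mathrm{S}_6''$). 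In characteristic $3$ the substitution $(x,y)\mapsto(\varpi^2x,\varpi^3y)$ need not yield an integral model even when $v(c_4)\ge4$, $v(c_6)\ge6$, $v(\Delta)\ge12$, so your case count is too small. Neither error derails the strategy, but both would need correcting in an actual writeup---or you can simply cite Papadopoulos as the paper does.
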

\begin{proof} The Kodaira-N\' eron type of $E/K$ is one of the types described in Theorem~8.2 of \cite{Silverman2009}, and it can be determined in terms of the coefficients of (\ref{W.E of EC}) using the Tate's algorithm. Now, the proof follows from the following equivalent steps:
	
\vspace*{-0.25 in}	
\begin{align*}
E &\text{ has additive but potentially good reduction}.\\
\Leftrightarrow &\ \text{The } j\text{-invariant } j(E) \text{ is integral, i.e., } v\left(j(E)\right)\ge0.\ (\text{See Proposition 5.5 in \cite{Silverman1994}}.)\\
\Leftrightarrow &\  3v\left(c_4\right) \ge v(\Delta).\ (\text{By definition of } j(E) \text{ as in (\ref{relations between WC})}.)\\
\Leftrightarrow &\ \text{The possible Kodaira-N\' eron types of } E \text{ are } {\rm I}_0^{*}, {\rm II}, {\rm II}^{*}, {\rm III}, {\rm III}^{*}, {\rm IV}, {\rm IV}^{*}.\ 
(\text{Table II in \cite{Papadopoulos1993}}.)\\
\Leftrightarrow &\ E \text{ satisfies one and only one of the conditions in Table~\ref{IndexNeron}}.\ (\text{By Table II in \cite{Papadopoulos1993}}.)\qedhere
\end{align*}
\end{proof}

\begin{table}
	\caption{Table of conditions in terms of the quantities $v(\Delta)$, $v(c_4)$, and $v(c_6)$.}
	\label{IndexNeron} 
	\renewcommand{\arraystretch}{0.8}
	\renewcommand{\arraycolsep}{0.23cm}
	\[ \begin{array}{cccccccc} 
	\toprule
	\text{Name of}& \text{Reducibility}&v(\Delta)&v(c_4)&v(c_6)& \text{Additional}&\text{N\'{e}ron}&v(N)\\
	\text{condition}&\text{condition} &&&&\text{condition} &\text{type}&\\
	\toprule
	{\rm P}_2&&6&2&3&&{\rm I}_0^{*}&2\\
	\cmidrule{4-6}
	&&&3&\ge 6&&&\\
	\toprule
	{\rm P}_4& -1 \in k^{\times 2}&3&\ge2&3&{\rm R}_2 \text{ as in }\eqref{additional condition on EC}&{\rm III}& 2\\
	\cmidrule{4-6}
	&&&2&\ge 5&&&\\\cmidrule{3-7}
	&&9&\ge4&6&{\rm R}_5 \text{ as in }\eqref{additional condition on EC}&{\rm III}^{*}&\\
	\cmidrule{4-6}
	&&&4&\ge 8&&&\\
	\toprule
	{\rm S}_4&-1 \not\in k^{\times2 }&3&\ge2&3&{\rm R}_2 \text{ as in }\eqref{additional condition on EC}&{\rm III}& 2\\
	\cmidrule{4-6}
	&&&2&\ge 5&&&\\\cmidrule{3-7}
	&&9&\ge4&6&{\rm R}_5 \text{ as in }\eqref{additional condition on EC}&{\rm III}^{*}&\\
	\cmidrule{4-6}
	&&&4&\ge 8&&&\\
	\toprule
	{\rm P}_3&\Delta \in K^{\times2 }&4&2&3&&{\rm II}&4\\
	\cmidrule{3-7}
	&&12&5&8&&{\rm II}^{*}&\\
	\toprule
	{\rm S}_3& \Delta\not\in K^{\times2 }&4&2&3&&{\rm II}&4\\
	\cmidrule{3-7}
	&&12&5&8&&{\rm II}^{*}&\\
	\toprule
	{\rm P}_6&\Delta \in K^{\times2 }&6&3&5&&{\rm IV}&4\\
	\cmidrule{3-7}
	&&10&4&6&&{\rm IV}^{*}&\\
	\toprule
	{\rm S}_6& \Delta \not\in K^{\times2 }&6&3&5&&{\rm IV}&4\\
	\cmidrule{3-7}
	&&10&4&6&&{\rm IV}^{*}&\\
	\toprule
	{\rm S}^{'}_6&\Delta \not\in K^{\times2 }&3&\ge2&3&{\rm non \ R}_2&{\rm II}&3\\
	\cmidrule{4-6}
	&&&2&4&&&\\
	\cmidrule{3-7}
	&&5&2&3&&{\rm IV}&\\
	\cmidrule{3-7}
	&&9&\ge4&6&{\rm non \ R}_5&{\rm IV}^{*}&\\
	\cmidrule{4-6}
	&&&4&7&&&\\
	\cmidrule{3-7}
	&&11&4&6&&{\rm II}^{*}&\\
	\toprule
	{\rm S}^{''}_6&\Delta \not\in K^{\times2 }&5&\ge3&4&&{\rm II}&5\\
	\cmidrule{3-7}
	&&7&\ge4&5&&{\rm IV}&\\
	\cmidrule{3-7}
	&&11&\ge5&7&&{\rm IV}^{*}&\\
	\cmidrule{3-7}
	&&13&\ge6&8&&{\rm II}^{*}&\\
	\bottomrule
	\end{array}\]
\end{table}

Assume that $E/K$ has additive but potential good reduction, i.e., $E$ satisfies one of the conditions in Table~\ref{IndexNeron}. Let $L/K^{\text{un}}$ be the smallest extension such that $E/L$ has good reduction. By the corollary after Lemme 3 of \cite{Kraus2007}, 
we have 
\begin{equation}
L=K^{\text{un}}(E[2], \Delta^{\frac 14}),
\end{equation}
where $K^{\text{un}}(E[2])$ is the splitting field of the polynomial on the right hand side of (\ref{W.E of EC}). Let $\sigma_E$ be the Weil-Deligne representation associated to $E/K$, which corresponds to an irreducible admissible representation $\pi_E$ of $\GL(2,K)$ by the local Langlands correspondence.
Now the kernel of $\sigma_E:W(\bar{K}/K) \rightarrow \GL(2,\C)$ is ${\rm Gal}(\bar{K}/L)$ (see section 2 of \cite{Rohrlich1993}). So, $\sigma_E:W(L/K) \cong W(\bar{K}/K)/{\rm Gal}(\bar{K}/L) \rightarrow \GL(2,\C)$ is a faithful representation. Let $\Lambda={\rm Gal}(L/K^{\text{un}})$. Then, we have
\begin{equation}
\label{def of W(L/K)}
I_K/{\rm Gal}(\bar{K}/L) \cong \Lambda  \text{ and } W(L/K)=\Lambda \rtimes \left\langle\Phi\right\rangle,
\end{equation}
and 
\begin{equation}
\label{sigmaE on lamda}
\sigma_E(I_K)=\sigma_E(I_K/{\rm Gal}(\bar{K}/L))=\sigma_E(\Lambda) \cong \Lambda.
\end{equation}
\begin{remark}
\label{remark1} 
Since $\sigma_E$ is faithful representation of $W(L/K)$, using (\ref{def of W(L/K)}) and (\ref{sigmaE on lamda}) we conclude that the order of $\sigma_E|_{I_K}$ is $|\Lambda|$. 
\end{remark}
\noindent Also, we will use the following remark in the proofs of the main theorems of this section.
\begin{remark}
	\label{ableian-reducible}
	A faithful two-dimensional semisimple complex representation of a group is reducible if and only if the group is abelian. So, the Weil-Deligne representation $\sigma_E$ associated to the given elliptic curve $E/K$ is reducible if and only if $W(L/K)$ is abelian, i.e., the image of $\sigma_E$ is abelian (also see Proposition 2 in \cite{Rohrlich1993}).	
\end{remark}
\subsection{Principal series representations in residual characteristic \texorpdfstring{$3$}{}}
We continue to assume that the residual characteristic of $K$ is $3$ and $v(3)$=1. The following theorem describes all possible principal series representations of $\GL(2,K)$ coming from elliptic curves over $K$ with additive but potentially good reduction.
\begin{theorem}
\label{thm1}
Let $E/K$ be an elliptic curve given by a minimal Weierstrass equation of the form (\ref{W.E of EC}) with the coefficients in $\OF_K$. Assume that $E$ satisfies one of the conditions in $\lbrace{{\rm P}_2, {\rm P}_4, {\rm P}_3, {\rm P}_6\rbrace}$ as defined in Table \ref{IndexNeron}. Then the corresponding $\GL(2,K)$ representation $\pi_E$ is a principal series representation, i.e., $\pi_E=\chi \times \chi^{-1}$, where $\chi$ is a character of $K^\times$ satisfying the following properties:
\begin{enumerate}
	\item[(i)]
	If $E$ satisfies the condition ${\rm P}_m$,  then $\chi|_{\OF_K^\times}$ has order $m$. Here, $m \in \left\{2,3,4,6\right\}$.
	\item[(ii)] The conductor $	a(\chi)$ of $\chi$ is given by
	\begin{equation*}
	a(\chi)=
	\begin{cases}
	1 \quad \text{if } E \text { satisfies } {\rm P_2} \text{ or }{\rm P_4},\\
	2 \quad \text{if } E \text { satisfies } {\rm P_3} \text{ or }{\rm P_6}.\\
	\end{cases}
	\end{equation*}
	\end{enumerate}
Furthermore, there is a unique such character on $\OF_K^\times$ when $E$ satisfies ${\rm P}_2$, and there are exactly two such characters on $\OF_K^\times$ when $E$ satisfies ${\rm P}_4$, which are inverses of each other.	
\end{theorem}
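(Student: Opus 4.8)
The plan is to combine the reducibility dichotomy recorded in Table~\ref{IndexNeron} with Remarks~\ref{remark1} and~\ref{ableian-reducible} and the conductor formula for a reducible Weil--Deligne representation. First, since $E$ satisfies one of ${\rm P}_2,{\rm P}_4,{\rm P}_3,{\rm P}_6$, the reducibility condition in that row holds (it is vacuous for ${\rm P}_2$, reads $-1\in k^{\times2}$ for ${\rm P}_4$, and reads $\Delta\in K^{\times2}$ for ${\rm P}_3$ and ${\rm P}_6$); feeding the description $L=K^{\text{un}}(E[2],\Delta^{1/4})$ into these conditions shows that $\Phi$ acts trivially by conjugation on $\Lambda=\Gal(L/K^{\text{un}})$, so $W(L/K)=\Lambda\rtimes\langle\Phi\rangle$ is abelian. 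By Remark~\ref{ableian-reducible}, $\sigma_E$ is then reducible; being semisimple and, in the normalization used throughout (cf.\ Theorem~\ref{potentially multiplicative}), of trivial determinant, it splits as $\sigma_E=\chi\oplus\chi^{-1}$ for a character $\chi$ of $W(\bar K/K)$, giving $\pi_E=\chi\times\chi^{-1}$, where $\chi$ also denotes the associated character of $K^\times$. Irreducibility of $\pi_E$ is automatic since $E$ has additive reduction, so the principal series is non-degenerate.

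For (i), Remark~\ref{remark1} gives that $\sigma_E|_{I_K}$ has order $|\Lambda|$; since $\sigma_E|_{I_K}=\chi|_{I_K}\oplus\chi^{-1}|_{I_K}$ has the same order as $\chi|_{I_K}$, we get $\mathrm{ord}(\chi|_{I_K})=|\Lambda|$, and since the inertia subgroup of $W(\bar K/K)^{\mathrm{ab}}$ corresponds to $\OF_K^\times\subseteq K^\times$ under reciprocity, $\mathrm{ord}(\chi|_{\OF_K^\times})=|\Lambda|$. It then remains to check $|\Lambda|=m$ for ${\rm P}_m$: the valuations $v(c_4),v(c_6),v(\Delta)$ in the row determine the Kodaira type (${\rm I}_0^*$ for ${\rm P}_2$, ${\rm III}/{\rm III}^*$ for ${\rm P}_4$, ${\rm II}/{\rm II}^*$ for ${\rm P}_3$, ${\rm IV}/{\rm IV}^*$ for ${\rm P}_6$), and a case-by-case computation of the extension $L/K^{\text{un}}$ (equivalently, the determination of the semistability defect in residue characteristic $3$ in \cite{Kraus2007}) yields $|\Lambda|=m$. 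This proves (i).

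For (ii), potentially good reduction forces the monodromy operator $N$ of $\sigma_E$ to vanish, so its Artin conductor is $a(\sigma_E)=a(\chi)+a(\chi^{-1})=2a(\chi)$; on the other hand $a(\sigma_E)$ equals the conductor exponent $v(N)$ of $E$, which by the last column of Table~\ref{IndexNeron} is $2$ in cases ${\rm P}_2,{\rm P}_4$ and $4$ in cases ${\rm P}_3,{\rm P}_6$. Hence $a(\chi)=1$ for ${\rm P}_2,{\rm P}_4$ and $a(\chi)=2$ for ${\rm P}_3,{\rm P}_6$, which is (ii); this is consistent with tameness (for ${\rm P}_2,{\rm P}_4$ one has $3\nmid m$) versus wildness (for ${\rm P}_3,{\rm P}_6$ one has $3\mid m$, forcing $a(\chi)\ge2$). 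Finally, when $a(\chi)=1$ the restriction $\chi|_{\OF_K^\times}$ is inflated from a character of the cyclic group $k^\times$ of order $q-1$: for ${\rm P}_2$ there is exactly one such character of order $2$ (as $q-1$ is even), and for ${\rm P}_4$ the hypothesis $-1\in k^{\times2}$ gives $4\mid q-1$, so there are exactly $\varphi(4)=2$ characters of exact order $4$, mutually inverse. This gives the concluding count.

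The main obstacle is the identification $|\Lambda|=m$ in step (i): because the residue characteristic is $3$, the extension $L/K^{\text{un}}$ need not be tame, so this is not a formula-driven step but a case analysis through the Kodaira types relying on \cite{Kraus2007}, and the same wild-ramification phenomena make the reducibility check -- that $\Phi$ fixes a generator of $\Lambda$ precisely under the stated conditions on $k$ and $\Delta$ -- the other delicate point.
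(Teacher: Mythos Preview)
Your proposal is correct and follows essentially the same route as the paper. Both arguments (a) invoke Th\'eor\`eme~1 of \cite{Kraus2007} to identify $|\Lambda|=m$, (b) use the abelianness of $W(L/K)$ together with Remark~\ref{ableian-reducible} to obtain $\sigma_E=\chi\oplus\chi^{-1}$, (c) read off $a(\chi)$ from the conductor exponent $v(N)$ in Table~\ref{IndexNeron}, and (d) count characters on $k^\times$ for the uniqueness statements; the only cosmetic difference is that you sketch the abelianness check directly from $L=K^{\rm un}(E[2],\Delta^{1/4})$ and the ``reducibility condition'' column, whereas the paper cites Propositions~3.2 and~3.3 of \cite{Kobayashi2002} for the same step.
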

\noindent Here, we consider the following diagram.
\vspace*{-0.32 in}
\begin{center}
\begin{equation}
\label{diagram for PS general case}
\begin{minipage}{80ex}
\[
\xymatrixrowsep{0.26in}
\xymatrixcolsep{0.9in}
\hspace{1.4 in} \xymatrix{\bar K\ar@{-}[d]\ar@/_1.5pc/@{-}[dd]_{I_K}\\L \ar@{-}[d]\ar@/^1.4pc/@{-}[d]^{\Lambda \phantom{xxxxxxxxxxxxxxxxxxx}}\\K^{\rm un}\ar@{-}[d]\ar@/_1.5pc/@{-}[d]_{\left\langle \Phi \right\rangle}\\K \ar@{-}[d]\\ \Q_3
}
\]
\end{minipage}
\end{equation}
\end{center}
The following lemma is very useful for Theorem~\ref{thm1}, which can easily be seen from the explicit form (\ref{localpara for PS}) of $\sigma_E$ and Remark~\ref{remark1}. 
\begin{lemma} 
	\label{order of chi}
	Assume that $\sigma_E \cong\chi \oplus \chi^{-1}$, where $\chi$ is a character of $W(L/K)$. Then $\chi|_{\Lambda}$ has order $|\Lambda|$, i.e.,\  $n=|\Lambda|$ is the smallest integer $n$ such that $\chi^{n}=1$ on $\Lambda$. 
\end{lemma}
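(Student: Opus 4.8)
The plan is to unwind the definitions so that the claim becomes a statement purely about the restriction $\sigma_E|_{I_K}$ and the shape of the parameter for a principal series. First I would recall from \eqref{localpara for PS} that if $\pi_E$ is the principal series $\chi \times \chi^{-1}$, then its Weil–Deligne parameter is $\sigma_E = \chi \oplus \chi^{-1}$ with $N = 0$, where by abuse of notation $\chi$ also denotes the character of $W(L/K) \cong W(\bar K/K)/\Gal(\bar K/L)$ obtained via the Artin reciprocity identification of characters of $K^\times$ with characters of $W(\bar K/K)$. Restricting to the inertia subgroup, $\sigma_E|_{I_K} = \chi|_{\Lambda} \oplus \chi^{-1}|_{\Lambda}$ under the identification $\sigma_E(I_K) = \sigma_E(\Lambda) \cong \Lambda$ from \eqref{sigmaE on lamda}.

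Next I would invoke Remark~\ref{remark1}: since $\sigma_E$ is a faithful representation of $W(L/K)$, the order of the finite group $\sigma_E(I_K)$ is exactly $|\Lambda|$. On the other hand, the image $\sigma_E(\Lambda)$ sits inside the diagonal torus as the set of matrices $\mathrm{diag}(\chi(g), \chi^{-1}(g))$ for $g \in \Lambda$, so the order of $\sigma_E(\Lambda)$ equals the order of the homomorphism $g \mapsto \chi(g)$ on $\Lambda$, i.e.\ the order of the character $\chi|_{\Lambda}$ as an element of the (cyclic) character group of $\Lambda$. Indeed $\mathrm{diag}(\chi(g),\chi^{-1}(g))$ is the identity precisely when $\chi(g) = 1$, so the kernel of $\sigma_E|_{\Lambda}$ equals the kernel of $\chi|_{\Lambda}$, whence $|\sigma_E(\Lambda)| = |\Lambda / \ker(\chi|_{\Lambda})| = \mathrm{ord}(\chi|_{\Lambda})$. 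Combining the two computations gives $\mathrm{ord}(\chi|_\Lambda) = |\Lambda|$, which is exactly the assertion that $n = |\Lambda|$ is the least $n$ with $\chi^n = 1$ on $\Lambda$.

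There is no real obstacle here; the only point requiring a word of care is the bookkeeping between the character $\chi$ of $K^\times$ and the character $\chi$ of $W(L/K)$ (and the fact that local class field theory sends $\OF_K^\times$, resp.\ $1+\p\OF_K$, to the inertia, resp.\ higher ramification, subgroups), together with the observation that $\Lambda = \Gal(L/K^{\mathrm{un}})$ is cyclic so that "order of $\chi|_\Lambda$ equals $|\Lambda|$" is equivalent to "$\chi|_\Lambda$ generates the character group", i.e.\ to the stated minimality of $n$. If one prefers, one can phrase the last step as: the faithfulness of $\sigma_E$ forces $\sigma_E|_{I_K}$ to be injective on $\Lambda$, and since $\sigma_E|_\Lambda$ and $\chi|_\Lambda$ have the same kernel, $\chi|_\Lambda$ is injective on $\Lambda$, hence of order $|\Lambda|$.
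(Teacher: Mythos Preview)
Your proposal is correct and follows essentially the same approach the paper indicates: the paper simply remarks that the lemma ``can easily be seen from the explicit form \eqref{localpara for PS} of $\sigma_E$ and Remark~\ref{remark1}'', and your argument is precisely the unpacking of that hint, namely that $\ker(\sigma_E|_{\Lambda})=\ker(\chi|_{\Lambda})$ together with the faithfulness from Remark~\ref{remark1} forces $\chi|_{\Lambda}$ to be injective, hence of order $|\Lambda|$. The aside about cyclicity of $\Lambda$ is unnecessary (the equivalence between ``$\chi|_{\Lambda}$ has order $|\Lambda|$'' and ``$n=|\Lambda|$ is minimal with $\chi^n=1$ on $\Lambda$'' is just the definition of order in the character group and holds for any finite $\Lambda$), but it does no harm.
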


\begin{proof}[\textbf{Proof of Theorem~\ref{thm1}}]
Suppose that $E$ satisfies the condition ${\rm P}_m$ for $m \in \lbrace{2,3,4,6\rbrace}$. By Th\'eor\`eme 1 of \cite{Kraus2007} (also see Theorem 3.1 of \cite{Kobayashi2002}), 
we have $\Lambda \cong \Z/m\Z$. Now, Proposition~3.2 of \cite{Kobayashi2002} implies that $W(L/K)$ is abelian, i.e.,\ the image of $\sigma_E$ is abelian. Then, using Remark \ref{ableian-reducible} (also see Proposition~3.3 of \cite{Kobayashi2002}), we get $\sigma_E\cong\chi \oplus \chi^{-1}$, where $\chi$ is a character of $W(L/K)$. By the local Langlands correspondence, the corresponding $\GL(2,K)$ representation $\pi_E$ is the principal series representation $\chi\times\chi^{-1}$, where $\chi$ is the corresponding character of $K^{\times}$. Using Lemma~\ref{order of chi} and the Artin isomorphism we see that $\chi|_{\OF_K^{\times}}$ has order $m$.
From Table~\ref{IndexNeron}, $a(\pi_E)=v(N)=2$ when $E$ satisfies either ${\rm P_2}$ or ${\rm P_4}$, which implies $a(\chi)=1$. In this case, we get an induced character $\chi: \OF_K^{\times}/(1+\p\OF_K)\cong k^{\times} \rightarrow \C^\times$. Note that $k^\times$ is a cyclic group of order $3^n-1$, where $K$ has degree $n$ over $\Q_3$. 

When $E$ satisfies the condition ${\rm P_2}$, the induced character $\chi: k^{\times} \rightarrow \C^\times$ has order $2$ and $2 \mid 3^n-1$. There is only one element of order $2$ in $k^{\times}$. So, the induced character is the unique such character of order $2$.

When $E$ satisfies the condition ${\rm P_4}$, the induced character $\chi: k^{\times} \rightarrow \C^\times$ has order $4$. Since $-1 \in k^{\times2}$, the quadratic extension $\Q_3(i)$ of $\Q_3$ is contained in $K$. Then $2\mid n$ and $4 \mid 3^{n}-1$. Now, there are exactly $\varphi(4)=2$ elements of order $4$ in $k^{\times}$. So, there are exactly two such characters $\chi$ on $ \OF_K^{\times}$, which are inverses of each other.

Similarly, when $E$ satisfies ${\rm P_3}$ or ${\rm P_6}$, it is easy to check that $a(\chi)=2$. Hence, we proved all the cases of Theorem~\ref{thm1}.
\end{proof}

\noindent The following result is a special but more precise version of Theorem~\ref{thm1} for $K=\Q_3$. Since $-1 \not\in \Q_3^{\times 2}$, an elliptic curve over $\Q_3$ {\it never} satisfies ${\rm P_4}$. 
\begin{corollary}
\label{corollary for Q3 in PS}
Let $E$ be an elliptic curve over $\Q_3$ given by a minimal Weierstrass equation of the form (\ref{W.E of EC}) with the coefficients in $\Z_3$. Assume that $E$ satisfies one of the conditions in $\lbrace{{\rm P}_2, {\rm P}_3, {\rm P}_6\rbrace}$ as defined in Table \ref{IndexNeron}. Then the corresponding $\GL(2,\Q_3)$ representation is a principal series representation, i.e., $\pi_E=\chi \times \chi^{-1}$, where $\chi$ is a character $\Q_3^\times$ satisfying the following properties:
\begin{enumerate}
	\item[(i)]	If $E$ satisfies the condition ${\rm P}_m$,  then $\chi|_{\Z_3^\times}$ has order $m$ with $m \in \left\{2,3,6\right\}$.
	\item[(ii)] The conductor $	a(\chi)$ of $\chi$ is given by
	\begin{equation*}
	a(\chi)=
	\begin{cases}
	1 \quad \text{if } E \text { satisfies } {\rm P_2},\\
	2 \quad \text{if } E \text { satisfies } {\rm P_3} \text{ or } {\rm P_6}.
	\end{cases}
	\end{equation*}
	\item[(iii)]
Furthermore, there is a unique such character on $\Z_3^{\times}$ when  $E$ satisfies ${\rm P}_2$, and there are exactly two such characters on $\Z_3^{\times}$ when $E$ satisfies ${\rm P}_3$ or ${\rm P}_6$, which are inverses of each other.	
\end{enumerate}
\end{corollary}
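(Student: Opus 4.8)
The plan is to deduce the corollary from Theorem~\ref{thm1} by specializing to $K=\Q_3$ and then pinning down the number of admissible characters using the smallness of the residue field $\mathbb{F}_3$. First I would observe that, because $-1\notin\Q_3^{\times2}$, the reducibility condition attached to ${\rm P}_4$ in Table~\ref{IndexNeron} (which requires $-1\in k^{\times2}$) can never be satisfied by an elliptic curve over $\Q_3$; consequently such a curve falling into the principal-series list $\{{\rm P}_2,{\rm P}_4,{\rm P}_3,{\rm P}_6\}$ of Theorem~\ref{thm1} must in fact satisfy one of ${\rm P}_2$, ${\rm P}_3$, ${\rm P}_6$. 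For each of these, Theorem~\ref{thm1} already yields $\pi_E=\chi\times\chi^{-1}$ together with the order of $\chi|_{\Z_3^\times}$ and the value of $a(\chi)$, so parts (i) and (ii) are immediate; the only work left is the counting in part (iii).

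For ${\rm P}_2$, the character $\chi|_{\Z_3^\times}$ has order $2$ and conductor $1$, hence factors through $\Z_3^\times/(1+3\Z_3)\cong\mathbb{F}_3^\times\cong\Z/2\Z$, a group with exactly one character of order $2$; this gives uniqueness. For ${\rm P}_3$ and ${\rm P}_6$, since $a(\chi)=2$, the character $\chi|_{\Z_3^\times}$ factors through $\Z_3^\times/(1+9\Z_3)$. Using the decomposition $\Z_3^\times\cong\mathbb{F}_3^\times\times(1+3\Z_3)$ together with $(1+3\Z_3)/(1+9\Z_3)\cong\Z/3\Z$, this quotient is cyclic of order $6$. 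A cyclic group of order $6$ has exactly $\varphi(m)$ characters of exact order $m$, and $\varphi(3)=\varphi(6)=2$, so in each case there are precisely two characters of the required order, and they form an inverse pair since the inverse of an order-$m$ character is again of order $m$ and there are only two of them.

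The one step I would write out carefully --- the closest thing to an obstacle --- is checking that an order-$3$ or order-$6$ character of $\Z_3^\times$ really has conductor exactly $2$ and not $1$: a conductor-$1$ character factors through $\mathbb{F}_3^\times\cong\Z/2\Z$, which has no element of order $3$ or $6$, so no such character descends further. This reconciles the count with the conductor values of Theorem~\ref{thm1}(ii) and finishes part (iii). Everything else is a direct transcription of Theorem~\ref{thm1} to $K=\Q_3$, so I do not anticipate any real difficulty.
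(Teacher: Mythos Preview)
Your proposal is correct and follows essentially the same route as the paper: specialize Theorem~\ref{thm1} to $K=\Q_3$, note that ${\rm P}_4$ cannot occur since $-1\notin\Q_3^{\times2}$, and for part~(iii) identify $\Z_3^\times/(1+9\Z_3)\cong(\Z/9\Z)^\times$ as cyclic of order~$6$ and count characters via $\varphi(3)=\varphi(6)=2$. Your decomposition $\Z/2\Z\times\Z/3\Z$ of this quotient and your extra verification that the conductor is exactly~$2$ are minor elaborations, but the core argument is identical to the paper's.
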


\begin{proof}
Since this corollary is a special case of Theorem~\ref{thm1} for $K=\Q_3$, most statements follow from Theorem~\ref{thm1} except for the property (iii) when $E$ satisfies ${\rm P}_3$ or ${\rm P}_6$. 

Note that when $E$ satisfies  ${\rm P}_3$ or ${\rm P}_6$, we have $a(\chi)=2$. So, the character $\chi|_{\Z_3^{\times}}$ induces a character on $\Z_3/(1+3^2\Z_3)\cong (\Z/9\Z)^{\times}$. Since $(\Z/9\Z)^{\times}$ is a cyclic group of order 6, there are exactly $\varphi(3)=2$ elements of order $3$ (resp.\ order $6$) in $(\Z/9\Z)^{\times}$. This concludes the property (iii) of the corollary when $E$ satisfies ${\rm P}_3$ or ${\rm P}_6$. 
\end{proof}

\subsection{Supercuspidal representations in residual characteristic \texorpdfstring{$3$}{}}
We continue to assume that the residual characteristic of $K$ is $3$ and $v(3)$=1. First, we state some facts that we use to prove Theorem \ref{thm2}. Let $G$ be a group, and $H$ an index-2 subgroup. All representations of these groups are assumed to be finite-dimensional and complex. Let $\sigma \in G \setminus H$, so that $G=H\cup \sigma H$. If $\xi$ is a representation of $H$, then the conjugate representation $\xi^{\sigma}$ is defined by $\xi^{\sigma}(h)=\xi(\sigma h \sigma^{-1})$. We denote the restriction functor by ${\rm res}_H^G$ and the induction functor by ${\rm ind}_H^G$. Then the following lemma is well known.

\begin{lemma}
\label{index2subgroup}
Let $G$ be a group, and $H$ be an index-2 subgroup of $G$. Let $\chi$ be the unique non-trivial character of $G/H$ and $\varphi$ be an irreducible representation of $G$. Then exactly one of the following statements holds-
\begin{enumerate}
\item $\varphi \not\cong \varphi \otimes \chi$ and ${\rm res}_H^G \varphi$ is irreducible. In that case, ${\rm ind}_H^G({\rm res}_H^G(\varphi))= \varphi \oplus (\varphi \otimes \chi)$.

\item $\varphi \cong \varphi \otimes \chi$ and ${\rm res}_H^G \varphi= \xi \oplus \xi^{\sigma}$, where $\xi$ is a representation of $H$. In that case,  $\xi\not\cong \xi^{\sigma}$, and $\varphi={\rm ind}_H^G(\xi)={\rm ind}_H^G(\xi^{\sigma})$.
\end{enumerate} 
\end{lemma}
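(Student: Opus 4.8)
The plan is to run the standard Clifford-theory argument, using only three ingredients: Frobenius reciprocity, the projection (push--pull) formula $\ind_H^G(\psi)\otimes\rho\cong\ind_H^G(\psi\otimes{\rm res}_H^G\rho)$, and Mackey's formula for the restriction of an induced representation. Throughout, $\langle\cdot,\cdot\rangle$ denotes the inner product of characters, so that $\langle\alpha,\beta\rangle$ counts common irreducible constituents, and a finite-dimensional complex representation $\rho$ is irreducible iff $\langle\rho,\rho\rangle=1$. First I would record the two identities that drive everything. Since $H$ has index $2$, the coset representation $\C[G/H]=\ind_H^G\triv$ is the regular representation of $G/H\cong\Z/2\Z$, hence $\ind_H^G\triv\cong\triv\oplus\chi$; feeding this into the projection formula with $\rho=\varphi$ gives $\ind_H^G{\rm res}_H^G\varphi\cong\varphi\otimes(\triv\oplus\chi)\cong\varphi\oplus(\varphi\otimes\chi)$, and $\varphi\otimes\chi$ is irreducible because $\chi$ is one-dimensional. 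On the other side, writing $G=H\sqcup\sigma H$, Mackey's formula yields for every representation $\psi$ of $H$ the identity ${\rm res}_H^G\ind_H^G\psi\cong\psi\oplus\psi^\sigma$.

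Next I would set $W={\rm res}_H^G\varphi$ and compute, by Frobenius reciprocity, $\langle W,W\rangle=\langle\ind_H^G W,\varphi\rangle=\langle\varphi\oplus(\varphi\otimes\chi),\varphi\rangle$, which equals $1$ when $\varphi\not\cong\varphi\otimes\chi$ and $2$ when $\varphi\cong\varphi\otimes\chi$; as these two alternatives are exhaustive and exclusive, so are the two cases of the lemma. In the first case $W$ is irreducible, and the identity $\ind_H^G W\cong\varphi\oplus(\varphi\otimes\chi)$ from the previous paragraph is exactly the asserted decomposition, which proves statement (1). In the second case $\langle W,W\rangle=2$ forces $W=\xi\oplus\xi'$ with $\xi,\xi'$ irreducible and $\xi\not\cong\xi'$. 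Moreover, since $\varphi$ is defined on all of $G$, the operator $\varphi(\sigma)$ intertwines $W$ with $W^\sigma$ (because $\varphi(\sigma h\sigma^{-1})=\varphi(\sigma)\varphi(h)\varphi(\sigma)^{-1}$), so $W^\sigma\cong W$, i.e.\ $\xi^\sigma\oplus\xi'^\sigma\cong\xi\oplus\xi'$.

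The one step that needs genuine care --- and what I expect to be the main obstacle --- is ruling out $\xi^\sigma\cong\xi$. I would argue by contradiction: if $\xi^\sigma\cong\xi$, then Frobenius reciprocity gives $\langle\ind_H^G\xi,\varphi\rangle=\langle\xi,W\rangle=1$, while $\dim\ind_H^G\xi=2\dim\xi=\dim W=\dim\varphi$, so $\ind_H^G\xi\cong\varphi$; but then Mackey forces $W={\rm res}_H^G\ind_H^G\xi\cong\xi\oplus\xi^\sigma\cong\xi\oplus\xi$, whose self-inner-product is $4\ne2$, a contradiction. Hence $\xi^\sigma\not\cong\xi$, and then $\xi^\sigma\oplus\xi'^\sigma\cong\xi\oplus\xi'$ together with $\xi^\sigma\not\cong\xi$ forces $\xi^\sigma\cong\xi'$, so $W={\rm res}_H^G\varphi\cong\xi\oplus\xi^\sigma$ with $\xi\not\cong\xi^\sigma$. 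Finally, $\langle\ind_H^G\xi,\varphi\rangle=\langle\xi,\xi\oplus\xi^\sigma\rangle=1$ and the equality $\dim\ind_H^G\xi=2\dim\xi=\dim\varphi$ give $\varphi\cong\ind_H^G\xi$, and the same computation with $\xi$ replaced by $\xi^\sigma$ (noting $(\xi^\sigma)^\sigma\cong\xi$ since $\sigma^2\in H$) gives $\varphi\cong\ind_H^G\xi^\sigma$ as well. This establishes statement (2) and finishes the proof.
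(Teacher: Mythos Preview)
The paper does not prove this lemma; it is simply stated as ``well known.'' Your argument is the standard Clifford-theory proof and is essentially correct.

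There is one small unjustified step. In your contradiction argument for case (2) you write $\dim\ind_H^G\xi=2\dim\xi=\dim W=\dim\varphi$, but at that point you only know $W=\xi\oplus\xi'$ with $\xi\not\cong\xi'$ irreducible, so $\dim W=\dim\xi+\dim\xi'$; the equality $\dim\xi=\dim\xi'$ has not been established. A cleaner way to finish avoids the contradiction entirely: since $\varphi\cong\varphi\otimes\chi$ in case (2), your identity $\ind_H^G W\cong\varphi\oplus(\varphi\otimes\chi)$ reads $\ind_H^G\xi\oplus\ind_H^G\xi'\cong 2\varphi$. Frobenius gives $\langle\ind_H^G\xi,\varphi\rangle=\langle\ind_H^G\xi',\varphi\rangle=1$, so each summand contains a copy of $\varphi$; as their dimensions add up to $2\dim\varphi$, this forces $\ind_H^G\xi\cong\varphi\cong\ind_H^G\xi'$. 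Then Mackey gives $W={\rm res}_H^G\ind_H^G\xi\cong\xi\oplus\xi^\sigma$, and matching this against $W=\xi\oplus\xi'$ with $\xi\not\cong\xi'$ yields $\xi^\sigma\cong\xi'\not\cong\xi$ directly.
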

\noindent The above lemma is useful when the Galois representation $\sigma_E$ attached to $E$ is irreducible. Also, we use the following remark for Theorem~\ref{thm2}.
\begin{remark}
	\label{remark about order of xi}
	Let $F$ be a quadratic extension of $K$. Assume that $\sigma_E=\text{ind }_{W(L/F)}^{W(L/K)}(\xi)$, where $\xi$ is a character of $W(L/F)$. Since $\sigma_E$ is a faithful representation of $W(L/K)$, using a similar argument as in Remark~\ref{remark1}, the order of $\sigma_E|_{{\rm Gal}(L/F^{\rm un})}$ is $|{\rm Gal}(L/F^{\rm un})|$. Then one can show that $\xi|_{{\rm Gal}(L/F^{\rm un})}$ also has order $|{\rm Gal}(L/F^{\rm un})|$ by looking at the explicit form of $\sigma_E$ as in \eqref{local parameter of D.S.C}. Note that, when $F/K$ is unramified, ${\rm Gal}(L/F^{\rm un})={\rm Gal}(L/K^{\rm un})=\Lambda$.
\end{remark}
\noindent The following theorem describes all the dihedral supercuspidal representations of $\GL(2,K)$ associated to elliptic curves over $K$ with additive but potentially good reduction.
\begin{theorem}
	\label{thm2}
	Let $E/K$ be an elliptic curve given by a minimal Weierstrass equation of the form (\ref{W.E of EC}) with the coefficients in $\OF_K$. Assume that $E$ satisfies one of the conditions in $\lbrace{{\rm S}_4, {\rm S}_3, {\rm S}_6,{\rm S}^{'}_6,{\rm S}^{''}_6 \rbrace}$ as defined in Table \ref{IndexNeron}. Then the associated $\GL(2,K)$ representation $\pi_E$ is a dihedral supercuspidal representation, i.e., $\pi_E=\omega_{F,\xi}$, where $F$ is a quadratic extension of $K$ and $\xi$ is a character of $F^{\times}$ satisfying the following properties:	
\begin{enumerate}
	\item[(i)]
		\begin{equation*}
	F=
	\begin{cases}
	K(i) &\text{is the unramified extension of } K \text{ if } E \text { satisfies } {\rm S_4},\\
	K(\sqrt{\Delta}) &\text{is the unramified extension of } K \text{ if } E \text { satisfies } {\rm S_3} \text{ or } {\rm S_6},\\
	K(\sqrt{\Delta}) &\text{is a ramified extension of } K \text{ if } E \text { satisfies } {\rm S}^{'}_6 \text{ or } {\rm S}^{''}_6.
	\end{cases}
	\end{equation*}
	\item[(ii)] The conductor $a(\xi)$ of $\xi$ is given by
	\begin{equation*}
a(\xi)=
\begin{cases}
1 \quad \text{if } E \text { satisfies } {\rm S}_4,\\
2 \quad \text{if } E \text { satisfies  a condition in } \left\{ {\rm S}_3,\rm {S}_6,{\rm S}^{'}_6\right\},\\
4 \quad \text{if } E \text { satisfies } {\rm S}^{''}_6.
\end{cases}
\end{equation*}
	\item[(iii)] 
If $E$ satisfies the condition ${\rm S}_m$ for $m \in \left\{3,4,6\right\}$, then $\xi|_{\OF_F^\times}$ has order $m$. If $E$ satisfies the condition ${\rm S}^{'}_6$ or ${\rm S}^{''}_6$, then $\xi|_{\OF_F^\times}$ has order $6$.
\item[(iv)]
		$\xi(\varpi_F)=-1$ when $E$ satisfies one of the conditions in $\lbrace{{\rm S}_4, {\rm S}_3, {\rm S}_6\rbrace}$. Here, $\varpi_F$ is a uniformizer of $F$ chosen to be in $K$.
\item[(v)] 
	In all cases, we have $\xi^{\sigma}=\xi^{-1} \text{ on } F^{\times}$.
\end{enumerate}
Furthermore, when $E$ satisfies ${\rm S}_4$, there are exactly two such characters on $\OF_F^{\times}$ and they are Galois conjugates of each other. 
\end{theorem}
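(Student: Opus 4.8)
The plan is to analyze each condition in $\{{\rm S}_4, {\rm S}_3, {\rm S}_6, {\rm S}_6', {\rm S}_6''\}$ along the lines of the proof of Theorem~\ref{thm1}, but now the image of $\sigma_E$ is nonabelian, so by Remark~\ref{ableian-reducible} the representation $\sigma_E$ is irreducible and we land in case (2) of Lemma~\ref{index2subgroup}. First I would invoke Th\'eor\`eme~1 of \cite{Kraus2007} (and Theorem~3.1 of \cite{Kobayashi2002}) to identify $\Lambda={\rm Gal}(L/K^{\rm un})$ for each condition: for ${\rm S}_4$ one gets $\Lambda$ cyclic of order $4$; for ${\rm S}_3,{\rm S}_6,{\rm S}_6',{\rm S}_6''$ one gets $\Lambda$ of order $3$, $6$, $6$, $6$ respectively, with $W(L/K)$ nonabelian in each case. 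The quadratic subextension $F$ is the fixed field of the (unique) index-$2$ subgroup of $W(L/K)$ corresponding to the kernel of the sign character $\chi_{F/K}$ such that $\sigma_E \cong {\rm ind}_{W(L/F)}^{W(L/K)}(\xi)$; identifying $F$ amounts to computing this quadratic subextension of $L/K$. Using the description $L=K^{\rm un}(E[2],\Delta^{1/4})$ and Proposition~3.2--3.3 of \cite{Kobayashi2002}, one checks that $F=K(i)$ (unramified) under ${\rm S}_4$ since $-1\notin k^{\times2}$ forces the order-$4$ piece to come from the unramified quadratic twist, that $F=K(\sqrt\Delta)$ is unramified under ${\rm S}_3$ or ${\rm S}_6$ because $\Delta\notin K^{\times2}$ but $\Delta$ becomes a square in the unramified quadratic extension, and that $F=K(\sqrt\Delta)$ is ramified under ${\rm S}_6'$ or ${\rm S}_6''$; this is part (i).

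Next, for (iii) and (v): by Lemma~\ref{index2subgroup}(2), ${\rm res}\,\sigma_E = \xi\oplus\xi^\sigma$ with $\xi\not\cong\xi^\sigma$, and since $E$ has trivial central character over the relevant base (the Weil--Deligne representation has determinant the cyclotomic-type character corresponding to a curve, and in the potentially-good additive case this restricts trivially to inertia up to the quadratic twist), Remark~\ref{remark that xi sigma=xi inverse} gives $\xi^\sigma=\xi^{-1}$ on $F^\times$ — this is (v). Then Remark~\ref{remark about order of xi} shows $\xi|_{{\rm Gal}(L/F^{\rm un})}$ has the same order as that group: when $F/K$ is unramified this is $|\Lambda|$, giving $\xi|_{\OF_F^\times}$ of order $m$ in the ${\rm S}_m$ cases; when $F/K$ is ramified (the ${\rm S}_6'$, ${\rm S}_6''$ cases) one has ${\rm Gal}(L/F^{\rm un})$ of order $3$ inside $\Lambda$ of order $6$, and a short argument (the order-$2$ part of $\Lambda$ is absorbed into the ramification of $F/K$) shows $\xi|_{\OF_F^\times}$ still has order $6$ rather than $3$ — I would extract this from the explicit form \eqref{local parameter for sc} together with the requirement that $\sigma_E$ be faithful on all of $W(L/K)$, since an order-$3$ restriction would make the image too small. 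This gives (iii).

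For (ii), the conductor: by the conductor formula \eqref{general_conductor_of_sc_parameter}, $a(\sigma_E)=d(F/K)+f(F/K)a(\xi)$, so $a(\sigma_E)=2a(\xi)$ when $F/K$ is unramified and $a(\sigma_E)=d(F/K)+a(\xi)$ when $F/K$ is ramified. Reading $a(\pi_E)=v(N)$ off the last column of Table~\ref{IndexNeron} ($v(N)=2$ for ${\rm S}_4$; $v(N)=4$ for ${\rm S}_3,{\rm S}_6$; $v(N)=3$ for ${\rm S}_6'$; $v(N)=5$ for ${\rm S}_6''$), together with $d(K(i)/K)$ in residual characteristic $3$ being $2$ for ramified quadratic extensions, one solves for $a(\xi)$: $a(\xi)=1$ for ${\rm S}_4$, $a(\xi)=2$ for ${\rm S}_3,{\rm S}_6$, $a(\xi)=3-1=2$ for ${\rm S}_6'$ (using $d(F/K)=1$? — rather, with $d=1$ for the tamely ramified case $p=3\nmid2$, so $a(\sigma_E)=1+a(\xi)=3$ gives $a(\xi)=2$), and $a(\xi)=5-1=4$ for ${\rm S}_6''$. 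Part (iv), $\xi(\varpi_F)=-1$ in the cases ${\rm S}_4,{\rm S}_3,{\rm S}_6$, follows as in Theorem~\ref{super cuspidal}: here $F/K$ is unramified so $\varpi_F=\varpi_K\in K$, and the Frobenius $\Phi$ acts on $\Lambda$ so that in the matrix form \eqref{local parameter for sc} the value $\xi(\sigma^2)=\xi(\varpi_F^2\cdot(\text{unit}))$ is forced; evaluating the central character triviality and using that $E$ acquires good reduction exactly over $L$ pins down $\xi(\varpi_F)=-1$, the nontrivial sign accounting for the quadratic twist by $\chi_{F/K}$. Finally the uniqueness/multiplicity statement for ${\rm S}_4$ is as in Corollary~\ref{corollary for Q3 in PS}: with $a(\xi)=1$ and order $4$, $\xi|_{\OF_F^\times}$ factors through $k_F^\times$ (cyclic of order $q_F-1$, with $4\mid q_F-1$ since $k_F$ contains a primitive $4$th root of unity), and there are exactly $\varphi(4)=2$ such characters, Galois-conjugate to one another.

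The main obstacle I expect is part (ii)/(iii) in the ramified cases ${\rm S}_6'$ and ${\rm S}_6''$: there $\Lambda$ has order $6$ but the relevant quadratic $F$ is ramified, so the order of $\xi|_{\OF_F^\times}$ and the exact value of $a(\xi)$ require carefully disentangling which part of the order-$6$ inertia action is "seen" by $\xi$ versus absorbed into $d(F/K)$ and $f(F/K)$, and why the answer is $a(\xi)=2$ in one case but $a(\xi)=4$ in the other even though both have $|\Lambda|=6$ — the discrepancy must be traced to the different $v(c_6),v(\Delta)$ data in Table~\ref{IndexNeron} feeding into Kraus's computation of the ramification filtration of $L/K$, and this is the step where I would have to do genuine local-field bookkeeping rather than cite a black box.
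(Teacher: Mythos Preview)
Your overall strategy matches the paper's, but there is a concrete factual error that derails the ramified cases. For ${\rm S}_6'$ and ${\rm S}_6''$ you assert that $\Lambda={\rm Gal}(L/K^{\rm un})$ has order $6$. In fact Th\'eor\`eme~1 of \cite{Kraus2007} (and Theorem~3.1 of \cite{Kobayashi2002}) give $\Lambda\cong \Z/3\Z\rtimes\Z/4\Z$, a nonabelian group of order $12$, in both of these cases. With the correct $\Lambda$ the picture becomes clean: $F=K(\sqrt{\Delta})$ is ramified, so $F^{\rm un}=FK^{\rm un}$ sits strictly between $K^{\rm un}$ and $L$, and ${\rm Gal}(L/F^{\rm un})$ is the unique index-$2$ subgroup of $\Lambda$, isomorphic to $\Z/6\Z$. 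Then Remark~\ref{remark about order of xi} directly gives that $\xi|_{\OF_F^\times}$ has order $|{\rm Gal}(L/F^{\rm un})|=6$, with no need for the ``short argument'' you sketch about an order-$3$ restriction being too small. Your statement that ``${\rm Gal}(L/F^{\rm un})$ has order $3$ inside $\Lambda$ of order $6$'' is simply wrong, and the convoluted faithfulness argument you propose to salvage order $6$ would not work as written.

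Two smaller points. First, your derivation of $a(\xi)$ in the ramified cases is correct in outcome but you overcomplicate it at the end: the difference between $a(\xi)=2$ for ${\rm S}_6'$ and $a(\xi)=4$ for ${\rm S}_6''$ is read off immediately from $v(N)=3$ versus $v(N)=5$ in Table~\ref{IndexNeron} and the formula \eqref{conductor_of_sc}; no ``genuine local-field bookkeeping'' of the ramification filtration is needed here. Second, your argument for (iv) is more roundabout than necessary. The paper's route is the one-line central-character computation: trivial central character means $\xi|_{K^\times}\cdot\chi_{F/K}=1$, and since $F/K$ is unramified, $\chi_{F/K}(\varpi_K)=-1$, whence $\xi(\varpi_F)=\xi(\varpi_K)=-1$.
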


\noindent Here, we need to consider the following two different diagrams depending on $F$ being the unramified or a ramified quadratic extension of $K$.
\begin{equation}
\label{field extension in sc case}
\begin{minipage}{80ex}
\[
\xymatrixrowsep{0.26in}
\xymatrixcolsep{0.9in}
\xymatrix{\bar K=\bar F\ar@{-}[d]\ar@/_1.5pc/@{-}[dd]_{I_K=I_{F}}\\L \ar@{-}[d]\ar@/^1.4pc/@{-}[d]^{\Lambda}\\K^{\rm un}={F}^{\rm un}\ar@{-}[d]\ar@/_1.5pc/@{-}[dd]_{\left\langle \Phi \right\rangle}\ar@/^1.4pc/@{-}[d]^{\left\langle \Phi^2 \right\rangle}\\F \ar@{-}[d]\ar@/^1.4pc/@{-}[d]^{\left\langle \sigma \right\rangle}\\ K
} \hspace{15ex}
\xymatrix{\bar K=\bar F\ar@{-}[d]\ar@/_1.5pc/@{-}[dd]_{I_F} \\L \ar@{-}[d]\ar@{-}[d]\ar@/^4.5pc/@{-}[dd]^{\Lambda}\\F^{\rm un}\ar@{-}[d]\ar@{-}[rd]\\K^{\text{un}} \ar@{-}[d]\ar@/_1.4pc/@{-}[d]_{\left\langle \Phi \right\rangle} & F\ar@/^.1pc/@{-}[ld]\\ K
}
\]
\end{minipage}
\end{equation}
\begin{proof}[\textbf{Proof of Theorem~\ref{thm2}}]
We distinguish two cases.

\noindent \textbf{Case 1:} Suppose that $E$ satisfies the condition ${\rm S}_m$ for some $m \in \left\{4,3,6\right\}$. Then by Th\'eor\`eme~1 of \cite{Kraus2007} (also see Theorem~3.1 of \cite{Kobayashi2002}), we have $\Lambda \cong \Z/m\Z$. Here we give a detailed proof for the condition ${\rm S}_4$ and we omit the details for the other two conditions since they are similar. For this case we consider the diagram on the left of (\ref{field extension in sc case}).

If $E$ satisfies the condition ${\rm S}_4$, then $-1 \not\in k^{\times 2}$. By Hensel's lemma $-1 \in \OF_K^{\times} \setminus \OF_K^{\times 2}$. So, the field $F=K(i)$ is the unique unramified extension of $K$ of degree $2$. Now, $W(L/K)=\Z/4\Z  \rtimes \left\langle\Phi\right\rangle$ and $\Phi^2$ acts trivially on $\Z/4\Z$ (since the non-trivial action of $\Phi$ on $\Z/4\Z$ sends $1 \mapsto 3$, $3 \mapsto 1$, and fixes $0,2$). So, we get $W(L/F)=\Z/4\Z  \times \left\langle\Phi^2\right\rangle$.
Note that $\sigma_E$ is a two dimensional faithful representation of $W(L/K)$. Since $-1 \not\in k^{\times 2}$, by Proposition~3.2 of \cite{Kobayashi2002}, $W(L/K)$ is not abelian. Then, by Remark~\ref{ableian-reducible} (also see Proposition~3.3 of \cite{Kobayashi2002}), $\sigma_E$ is an irreducible representation. Now, since $W(L/F)$ is abelian, by Remark \ref{ableian-reducible} and Lemma~\ref{index2subgroup}, $\text{res}_{W(L/F)}^{W(L/K)}(\sigma_E)= \xi \oplus \xi^{\sigma}$ for some character $\xi$ of $W(L/F)$ with $\xi \neq \xi^{\sigma}$ and $\sigma_E=\ind_{W(L/F)}^{W(L/K)}(\xi)$. Here $\sigma$ is the non-trivial element in $W(L/K)\setminus W(L/F)$. 
Hence, by the local Langlands correspondence, $\pi_E=\omega_{F,\xi}$, where $\xi$ is the corresponding character of $F^{\times}$ via the Artin isomorphism. 

The assertion (ii) is immediate from (\ref{conductor_of_sc}). The property (iii) follows from Remark~\ref{remark about order of xi} and the Artin isomorphism, i.e., $\xi|_{\OF_F^{\times}}$ has order $|\Lambda|=|\Z/4\Z|=4$. For (iv), note that the representation $\pi_E$ has trivial central character. By Remark~\ref{remark that xi sigma=xi inverse}, we have $$1=\xi|_{K^{\times}}(\varpi_K).\chi_{F/K}(\varpi_K)=\xi(\varpi_K)\cdot(-1) \quad (\text{since } \chi_{F/K}(\varpi_K)=-1).$$
Since $F$ is the unramified extension over $K$, we may assume $\varpi_K=\varpi_F$. Hence, $\xi(\varpi_F)=-1$. The statement (v) is immediate from Remark~\ref{remark that xi sigma=xi inverse}.

 When $E$ satisfies ${\rm S}_4$, we have a uniqueness result. Since $a(\xi)=1$, we get the induced character $\xi: \OF_{F}^{\times}/(1+\p\OF_F) \rightarrow \C^\times$ of order $4$. Note that $\OF_{F}^{\times}/(1+\p\OF_F)$ is a cyclic group of order $3^{2n}-1$, where $K$ has degree $n$ over $\Q_3$, and $4 \mid 3^{2n}-1$. Now, there are exactly $2$ elements of order $4$ in $\OF_{F}^{\times}/(1+\p\OF_F)$ which are inverses of each other. Since $\xi^{\sigma}=\xi^{-1}$ and $\xi^{\sigma} \neq \xi$, there are exactly two such characters in this case, which are Galois conjugates of each other.

\noindent \textbf{Case 2:} Suppose that $E$ satisfies either the condition ${\rm S}^{'}_6$ or ${\rm S}^{''}_6$. Then by Th\'eor\`eme~1 of \cite{Kraus2007} (also see Theorem 3.1 of \cite{Kobayashi2002}), we have $\Lambda \cong \Z/3\Z \rtimes \Z/4\Z$. We will give a proof for the condition ${\rm S}^{'}_6$; the other case is similar. For this case we consider the diagram on the right of (\ref{field extension in sc case}).
If $E$ satisfies the condition ${\rm S}^{'}_6$, then $\Delta \not\in K^{\times 2}$ and $v(\Delta)$ is odd. So, $\Delta=\varpi^{2l+1}\cdot u$ for some $u \in \OF_K^{\times}$, and 
\begin{equation*}
K(\sqrt{\Delta})=\begin{cases}
K(\sqrt{\varpi})\quad  &\text{if } u \in  \OF_K^{\times 2},\\
K(\sqrt{\varpi u})\quad  &\text{if } u \not \in  \OF_K^{\times 2}.
\end{cases} 
\end{equation*}
In either case, $K(\sqrt{\Delta})$ is a ramified extension of $K$ of degree $2$. By Proposition 3.2 of \cite{Kobayashi2002}, $W(L/K(\sqrt{\Delta}))$ is abelian since $\Delta \in K(\sqrt{\Delta})^{\times}$. Let $F=K(\sqrt{\Delta})$. Since $F^{\rm un}$ is the compositum of $F$ and $K^{\rm un}$, we get ${\rm Gal}(K^{\text{un}}/K) \cong {\rm Gal}(F^{\text{un}}/F)=\left\langle \Phi \right\rangle$. Here we consider an inverse Frobenius $\Phi \in {\rm Gal}(F^{\text{un}}/F)$ as an image of an inverse Frobenius of ${\rm Gal}(\bar{K}/K)$ inside ${\rm Gal}(F^{\text{un}}/F)$. Now, ${\rm Gal}(L/F^{\text{un}}) \cong \Z/6\Z$ is the unique subgroup of order $6$ in $\Lambda$. Hence, we get $W(L/F)= \Z/6\Z \times\left\langle \Phi \right\rangle$.
Then, using a similar argument as in case 1 we get $\pi_E=\omega_{F,\xi}$, where the field $F$ is a ramified quadratic extension of $K$ and $\xi$ is the corresponding character of $F^{\times}$. Also, one can easily prove the properties (ii), (iii), and (v) using similar arguments as in case 1.
\end{proof}
\noindent The following result is a special but more precise version of Theorem~\ref{thm2} for $K=\Q_3$:
\begin{corollary}
\label{corollary for Q3 for sc}
Let $E$ be an elliptic curve over $\Q_3$ given by a minimal Weierstrass equation of the form (\ref{W.E of EC}) with the coefficients in $\Z_3$. Assume that $E$ satisfies one of the conditions in $\lbrace{{\rm S}_4, {\rm S}_3, {\rm S}_6,{\rm S}^{'}_6,{\rm S}^{''}_6 \rbrace}$ as defined in Table~\ref{IndexNeron}. Then the corresponding $\GL(2,\Q_3)$ representation $\pi_E$ is a supercuspidal representation, i.e., $\pi_E=\omega_{F,\xi}$, where  $F$ is a quadratic extension of $\Q_3$ and $\xi$ is a character of $F^{\times}$ with the following properties:
\begin{enumerate}
         \item[(i)]
         \label{quadratic extension F for Q3}
	\begin{equation*}
	F=
	\begin{cases}
	\Q_3(i) &\text{is the unramified extension of } \Q_3 \text{ if } E \text { satisfies } {\rm S_4},\\
	\Q_3(\sqrt{\Delta}) &\text{is the unramified extension of } \Q_3 \text{ if } E \text { satisfies }{\rm S_3} \text{ or } {\rm S_6},\\
	\Q_3(\sqrt{\Delta}) &\text{is a ramified extension of } \Q_3 \text{ if } E \text { satisfies } {\rm S}^{'}_6 \text{ or } {\rm S}^{''}_6.
	\end{cases}
	\end{equation*}
	\item[(ii)] The conductor $a(\xi)$ of $\xi$ is given by
	\begin{equation*}
	a(\xi)=
	\begin{cases}
	1 \quad \text{if } E \text { satisfies } {\rm S}_4,\\
	2 \quad \text{if } E \text { satisfies  a condition in } \left\{ {\rm S}_3,\rm {S}_6,{\rm S}^{'}_6\right\},\\
	4 \quad \text{if } E \text { satisfies } {\rm S}^{''}_6.
	\end{cases}
	\end{equation*}
	\item[(iii)] 
If $E$ satisfies the condition ${\rm S}_m$ for $m \in \left\{3,4,6\right\}$, then $\xi|_{\OF_F^\times}$ has order $m$. If $E$ satisfies the condition ${\rm S}^{'}_6$ or ${\rm S}^{''}_6$, then $\xi|_{\OF_F^\times}$ has order $6$.
	\item[(iv)] For all cases we have $\xi^{\sigma}=\xi^{-1} \text{ on } F^{\times}$.
	\item[(v)]
Furthermore, when $E$ satisfies one of the conditions in $\lbrace{{\rm S}_4, {\rm S}_3, {\rm S}_6,{\rm S}^{'}_6 \rbrace}$, there are exactly two such characters on $\OF_F^{\times}$ which are Galois conjugates of each other. When $E$ satisfies ${\rm S}^{''}_6$, there are exactly six such characters on $\OF_F^{\times}$.
\end{enumerate}
\end{corollary}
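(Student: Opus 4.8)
The plan is to deduce (i)--(iv) as the $K=\Q_3$ specialisation of Theorem~\ref{thm2} and to concentrate the argument on the counting statement (v). For (i) one uses that $-1\notin\Q_3^{\times2}$, so $\Q_3$ has a single unramified quadratic extension $\Q_3(i)=\Q_9$; this is therefore the field $F$ of Theorem~\ref{thm2} in the cases ${\rm S}_4,{\rm S}_3,{\rm S}_6$, whereas $F=\Q_3(\sqrt\Delta)$ is ramified for ${\rm S}^{'}_6,{\rm S}^{''}_6$. For ${\rm S}^{''}_6$ I would further identify $F$ with $\Q_3(\sqrt{-3})=\Q_3(\zeta_3)$, either by reading off $\Delta$ modulo squares from the valuations of $c_4,c_6,\Delta$ tabulated in Table~\ref{IndexNeron}, or, more cleanly, by noting that Theorem~\ref{thm2} forces a character $\xi$ with $\xi|_{\OF_F^\times}$ of order $6$ and conductor $4$ to exist, while the other ramified quadratic extension $\Q_3(\sqrt3)$ admits no character of order $3$ and conductor $4$ on its unit group (its quotient $(1+\p_F)/(1+\p_F^4)$, where $\p_F$ is the maximal ideal of $\OF_F$, is $\Z/9\times\Z/3$, whose order-$3$ characters all have conductor $\le 3$). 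Parts (ii), (iii), (iv) are then immediate from Theorem~\ref{thm2}.

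For (v) I would first reduce to a pure character count. By Theorem~\ref{thm2}(ii),(iii) together with $\xi^\sigma=\xi^{-1}$, any $\xi$ occurring for $E$ restricts to a character $\psi:=\xi|_{\OF_F^\times}$ of the order prescribed in (iii), of conductor $a(\xi)$, with $\psi^\sigma=\psi^{-1}$; conversely every such $\psi$ extends to an admissible $\xi$ on $F^\times$, so it is enough to count the $\psi$ with these three properties. Decompose $\OF_F^\times=\mu\times(1+\p_F)$ with $\mu$ the Teichm\"uller lift of the residue-field units and $1+\p_F$ the pro-$3$ part: a character of $3$-power order kills $\mu$, and any order-$2$ factor of $\psi$ is the unique order-$2$ character of $\mu$, hence forced and $\sigma$-fixed. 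The pivotal observation is that $\psi^\sigma=\psi^{-1}$ means, on each $\sigma$-stable quotient $V$ of $\OF_F^\times$ through which $\psi$ factors, that $\psi|_V$ is trivial on $V^\sigma$, i.e.\ $\psi|_V$ lies in the $(-1)$-eigenspace of $\sigma$ acting on $\widehat V$; so the count reduces to bookkeeping eigenspace dimensions over $\mathbb{F}_3$, once the $\sigma$-module structure of these quotients is pinned down.

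Carrying this out case by case: for ${\rm S}_4$, $\psi$ has conductor $1$, factors through $\mu=\mathbb{F}_9^\times\cong\Z/8\Z$, and has order $4$; since $\sigma$ acts by $x\mapsto x^3$ and $3\equiv-1\pmod{4}$, the condition $\psi^\sigma=\psi^{-1}$ is automatic, giving exactly $\varphi(4)=2$ characters, interchanged by $\sigma$. For ${\rm S}_3$ (and, after recombining with the forced order-$2$ factor, ${\rm S}_6$ and ${\rm S}^{'}_6$), the order-$3$ part of $\psi$ has conductor $2$, hence factors through $(1+\p_F)/(1+\p_F^2)\cong\p_F/\p_F^2$, which is $\mathbb{F}_9$ (unramified $F$) or $\mathbb{F}_3$ (ramified $F$), with $\sigma$ acting by Frobenius resp.\ by negation of a uniformiser; in each case $V^\sigma$ has codimension $1$ over $\mathbb{F}_3$, so the admissible characters form a punctured $\mathbb{F}_3$-line --- exactly $2$, interchanged by $\sigma$. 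For ${\rm S}^{''}_6$, with $F=\Q_3(\zeta_3)$, I would use $1+\p_F\cong\mu_3\times(1+\p_F^2)$, with $1+\p_F^2\cong\Z_3^2$, with $\mu_3$ mapping isomorphically onto $(1+\p_F)/(1+\p_F^2)$, and with $(1+\p_F)^3=1+\p_F^4$; then an order-$3$, conductor-$4$ character of $1+\p_F$ is a pair $(\alpha,\delta)$ with $\alpha\in\Hom(\mu_3,\mu_3)$ and $\delta\in\Hom\big((1+\p_F^2)/(1+\p_F^4),\mu_3\big)$ nontrivial on $(1+\p_F^3)/(1+\p_F^4)$. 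Because $\sigma$ acts as $-1$ on $\mu_3$ and on $\p_F^3/\p_F^4$ but as $+1$ on $\p_F^2/\p_F^3$, the constraint $\psi^\sigma=\psi^{-1}$ leaves $\alpha$ free ($3$ choices) and forces $\delta$ through the line $(1+\p_F^3)/(1+\p_F^4)$, where it must be nontrivial ($2$ choices): $3\cdot2=6$ characters. (In every case one also checks that $\psi|_{\Z_3^\times}=\chi_{F/\Q_3}|_{\Z_3^\times}$ then holds automatically, matching the triviality of the central character via Remark~\ref{remark that xi sigma=xi inverse}.)

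The main obstacle is not the counting but the local-field input feeding it: establishing that $F=\Q_3(\zeta_3)$ in the ${\rm S}^{''}_6$ case, and computing the action of $\sigma$ on the graded pieces $\p_F^n/\p_F^{n+1}$ and the torsion of $1+\p_F$ for each relevant $F$ (which is precisely where $\Q_3(\zeta_3)$ versus $\Q_3(\sqrt3)$ matters, through the structure of $(1+\p_F)/(1+\p_F^4)$). With these ingredients in place, the eigenspace count over $\mathbb{F}_3$ is routine.
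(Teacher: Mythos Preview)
Your argument is correct and essentially complete. The approach differs from the paper's in organization rather than substance: the paper deduces (i)--(iv) from Theorem~\ref{thm2} just as you do, but for (v) it works via the norm-group description of the constraint $\xi^\sigma=\xi^{-1}$ (i.e., $\xi$ trivial on $N_{F/\Q_3}(\OF_F^\times)\subset\OF_F^\times$), explicitly computing the quotient $\OF_F^\times/(1+\p_F^{a(\xi)})$ modulo the appropriate norm image (using two elementary splitting lemmas for the unit filtration) and then counting characters of the prescribed order on that quotient. Your $\sigma$-eigenspace viewpoint---tracking the action of $\sigma$ on the graded pieces $\p_F^n/\p_F^{n+1}$ and on $\mu$, and reading off the $(-1)$-eigenspace over $\mathbb{F}_3$---is the dual formulation of the same constraint and yields a more uniform treatment across the five cases, at the cost of needing the $\sigma$-module structure of each graded piece as explicit input. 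For the identification $F=\Q_3(\zeta_3)$ in case ${\rm S}^{''}_6$, the paper computes $3^{-v_3(\Delta)}\Delta\equiv -1\pmod 3$ directly from the Weierstrass data (its Lemma~\ref{structure of 1+pF4}); your alternative argument via the non-existence of a conductor-$4$, order-$3$ unit character on $\Q_3(\sqrt3)$ (because $(1+\p_F)/(1+\p_F^4)\cong\Z/9\times\Z/3$ there, with $(1+\p_F^3)/(1+\p_F^4)$ coinciding with the subgroup of cubes) is a pleasant and independent route to the same conclusion.
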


\begin{proof}
All the properties of this corollary follow from Theorem \ref{thm2} except for the property (v) when $E$ satisfies one of the conditions in $\lbrace{ {\rm S}_3, {\rm S}_6,{\rm S}^{'}_6,{\rm S}^{''}_6\rbrace}$. We will use the following two lemmas to show the property (v). We omit the elementary proofs of the lemmas.
\begin{lemma}
	\label{general split exact sequence}
	Let $G$ be a group and $H$ be a subgroup of $G$ with order $k$. Let $C$ be a cyclic group of order $n$ such that we have the following exact sequence:
	\begin{equation*} 1\rightarrow H  \rightarrow G\rightarrow C \rightarrow 1.\end{equation*}
	If $\text{gcd }(k,n)=1$, then the above exact sequence splits.
\end{lemma}
\begin{lemma}
\label{split exact squence}
Let $F$ be a non-archimedean local field of characteristic $0$. Then
the following exact sequence 
\begin{equation*}
1\rightarrow (1+\p\OF_F)/(1+\p^2\OF_F)\cong \OF_F/\p\OF_F \xrightarrow{\alpha} \OF_{F}^{\times}/(1+\p^2\OF_F) \xrightarrow{\beta}
\OF_{F}^{\times}/(1+\p\OF_F) \rightarrow 1\end{equation*}
splits, i.e., $\OF_{F}^{\times}/(1+\p^2\OF_{F}) \cong (1+\p\OF_F)/(1+\p^2\OF_{F}) \times \OF_{F}^{\times}/(1+\p\OF_F)$.
\end{lemma}
\noindent Now, we will prove the property (v) of Corollary~\ref{corollary for Q3 for sc} in three cases.

\noindent \textbf{Case 1:} Assume that $E$ satisfies ${\rm S}_3$ or ${\rm S}_6$. Then $F=\Q_3(\sqrt{\Delta})$ is the unramified quadratic extension of $\Q_3$. Also, $a(\xi)=2$ and the order of $\xi|_{\OF_F^{\times}}$ is $3$ or $6$.

Now, $(1+\p\OF_F)/(1+\p^2\OF_{F}) \cong \OF_{F}/\p\OF_F$ has order $3^{2}$. Since characteristic of $F$ is $3$, we get
$3x=0$ for all $x \in \OF_{F}/\p\OF_F$. Then $(1+\p\OF_F)/(1+\p^2\OF_{F}) \cong  \OF_{F}/\p\OF_F \cong (\Z/3\Z)^{2}$. Also, $\OF_{F}^{\times}/(1+\p\OF_F)$ is the cyclic group of order $8$. Using Lemma~\ref{split exact squence}, we get
\begin{equation}
\OF_{F}^{\times}/(1+\p^2\OF_{F}) \cong (\Z/3\Z)^{2} \times \Z/8\Z.
\end{equation}
Since $a(\xi)=2$, we get an induced character $\xi:\OF_{F}^{\times}/(1+\p^2\OF_{F})\longrightarrow \C^{\times}$.
Again, using Lemma~\ref{split exact squence}, $\OF_K^{\times}/(1+\p^2\OF_{K})\cong \Z/3\Z \times \Z/2\Z$. Now, $\OF_K^{\times}/(1+\p^2\OF_{K})\hookrightarrow \OF_F^{\times}/(1+\p^2\OF_{F})$ (because $\OF_K^{\times}\cap(1+\p^2\OF_{F})=(1+\p^2\OF_{K})$). Since $\chi_{F/K}$ is the unramified quadratic character of $K^{\times}$, using Remark~\ref{remark that xi sigma=xi inverse}, $ \xi|_{\OF_K^{\times}/(1+\p^2\OF_{K})}=1$.
Then, we get the induced character
\begin{equation}
\xi: \left(\OF_{F}^{\times}/(1+\p^2\OF_{F})\right) / \left(\OF_K^{\times}/(1+\p^2\OF_K)\right)\cong \Z/12\Z \longrightarrow \C^{\times}
\end{equation}
of order $3$ or $6$ when $E$ satisfies ${\rm S_3}$ or ${\rm S_6}$ respectively. 
Now, $\Z/12\Z$ has exactly $2$ elements of order $3$ (resp.\  order $6$), which are inverses of each other. So, using Remark~\ref{remark that xi sigma=xi inverse}, we conclude that there are exactly two such characters of order $3$ (resp.\ order $6$) on $\OF_{F}^{\times}$ when $E$ satisfies ${\rm S_3}$ (resp.\  ${\rm S_6}$), and they are Galois conjugates of each other.

\noindent \textbf{Case 2:} Assume that $E$ satisfies ${\rm S}^{'}_6$. Then $F=\Q_3(\sqrt{\Delta})$ is a ramified quadratic extension of $\Q_3$, the order of $\xi|_{\OF_F^{\times}}$ is $6$, and $a(\xi)=2$. Now, we have
\begin{equation}
\OF_F/\p\OF_F \cong \Z_3/3\Z_3 \cong \Z/3\Z \text{ and } \OF_{F}^{\times}/(1+\p\OF_F) \cong (\Z_3/3\Z_3)^{\times} \cong \Z/2\Z.
\end{equation}
So, by Lemma~\ref{split exact squence}, $\OF_{F}^{\times}/(1+\p^2\OF_{F}) \cong \Z/3\Z \times \Z/2\Z \cong \Z/6\Z$. Since $a(\xi)=2$, we get the induced character
\begin{equation}
\xi:\OF_{F}^{\times}/(1+\p^2\OF_{F}) \cong  \Z/6\Z \longrightarrow \C^{\times}
\end{equation}
of order $6$. There are exactly two elements of order $6$ in $\Z/6\Z$, which are inverses of each other. Hence, using Remark~\ref{remark that xi sigma=xi inverse}, there are exactly two such characters of order $6$ on $\OF_{F}^{\times}$ when $E$ satisfies ${\rm S}^{'}_6$, and they are Galois conjugates of each other.

\noindent \textbf{Case 3:} Assume that $E$ satisfies ${\rm S}^{''}_6$. Then $F=\Q_3(\sqrt{\Delta})$ is a ramified quadratic extension of $\Q_3$, the order of $\xi|_{\OF_F^{\times}}$ is $6$, and $a(\xi)=4$. So, we get the induced character
\begin{equation*}
\xi:\OF_{F}^{\times}/(1+\p^4\OF_{F})\longrightarrow \C^{\times}.
\end{equation*}
Using Lemma \ref{general split exact sequence}, one can show that the following exact sequence 
\begin{equation*}
1\rightarrow (1+\p\OF_F)/(1+\p^4\OF_F)\rightarrow \OF_{F}^{\times}/(1+\p^4\OF_F) \rightarrow\OF_{F}^{\times}/(1+\p\OF_F) \rightarrow 1
\end{equation*} 
splits, i.e.,
\begin{equation}
\label{spliting for p^4}
\OF_{F}^{\times}/(1+\p^4\OF_F) \cong \OF_{F}^{\times}/(1+\p\OF_F) \times  (1+\p\OF_F)/(1+\p^4\OF_F).
\end{equation}
The following lemma gives the structure of the group $(1+\p\OF_F)/(1+\p^4\OF_F)$. 
\begin{lemma}
\label{structure of 1+pF4}
Suppose that $E/\Q_3$ satisfies ${\rm S}^{''}_6$. Then the representation $\pi_E$ of $\GL(2,\Q_3)$ associated to $E/\Q_3$ is $\pi_E=\omega_{F,\xi}$ with $F=\Q_3(\sqrt{-3})$. Moreover, \begin{equation}
(1+\p\OF_F)/(1+\p^4\OF_F) \cong	(\Z/3\Z)^3.
\end{equation}
\end{lemma}
\begin{proof}
When $E$ satisfies ${\rm S}^{''}_6$, we have $\pi_E=\omega_{F,\xi}$ with $F=\Q_3(\sqrt{\Delta})$. To see that $F=\Q_3(\sqrt{-3})$, we need to consider all possible values of $\left( v_3(\Delta),v_3(c_4),v_3(c_6)\right)$ in Table~\ref{IndexNeron} when $E$ satisfies ${\rm S}^{''}_6$, and use the relation $\Delta=\frac{c_4^3-c_6^2}{1728}$. One can easily check that $$3^{-v_3(\Delta)}\Delta\equiv -1 \pmod 3$$ when $E$ satisfies ${\rm S}^{''}_6$. Hence, $F=\Q_3(\sqrt{\Delta}) \cong \Q_3(\sqrt{-3})$. Now, by counting the number of elements of order $3$ in $(1+\p\OF_F)/(1+\p^4\OF_F)$, one can show that $(1+\p\OF_F)/(1+\p^4\OF_F) \cong	(\Z/3\Z)^3$.
\end{proof}
\noindent Since $\OF_{F}^{\times}/(1+\p\OF_F)\cong \Z/2\Z$, using \eqref{spliting for p^4} and Lemma~\ref{structure of 1+pF4}, we get $\OF_{F}^{\times}/(1+\p^4\OF_F) \cong \Z/2\Z \times(\Z/3\Z)^3$. Now, $(1+\p\OF_K)/(1+\p^2\OF_K) \hookrightarrow (1+\p\OF_F)/(1+\p^4\OF_F)$ since $\p\OF_K \cap \p^4\OF_F=\p^2\OF_K$. By corollary~$3$ of \S V3 in \cite{Serre1979}, $1+\p\OF_K=N_{F/K}(1+\p^2\OF_F)$. Since $\xi^{\sigma}=\xi^{-1}$ on $F^{\times}$, we get  $\xi(1+\p\OF_K)=\xi\left(N_{F/K}(1+\p^2\OF_F)\right)=1$. So, $\xi|_{(1+\p\OF_K)/(1+\p^2\OF_K)}=1$, where $(1+\p\OF_K)/(1+\p^2\OF_K)\cong \Z/3\Z$. Then we get the following induced character of order $6$:
\begin{equation*}
\xi: \left(\OF_{F}^{\times}/(1+\p^4\OF_F)\right)/\left((1+\p\OF_K)/(1+\p^2\OF_K)\right)\cong \Z/2\Z \times(\Z/3\Z)^2 \longrightarrow \C^{\times}.
\end{equation*}
Also, $\xi$ is nontrivial on $(1+\p^3\OF_F)/(1+\p^4\OF_F)$ since $a(\xi)=4$. So, there are exactly $6$ characters $\xi$ such that $a(\xi)=4$, the order of $\xi|_{\OF_F^{\times}}$ is $6$, and $\xi^{\sigma}=\xi^{-1}$ on $F^{\times}$. This completes the proof of the property (v) of Corollary \ref{corollary for Q3 for sc}.
\end{proof}
\section{Paramodular forms associated to elliptic curves via the \texorpdfstring{$\sym^3$}{} lifting}
\label{Section 3}
In this section, we will find a formula for the level of the paramodular forms obtained by the $\sym^3$ lifting from non-CM elliptic curves over $\Q$. Let $E$ be a non-CM elliptic curve over $\Q$. Note that $E/\Q$ has the global minimal Weierstrass equation of the form (\ref{W.E of EC}) with the coefficients in $\Z$. The discriminant $\Delta$, the $j$-invariant $j(E)$, and the quantities $c_4$, $c_6$ are the usual constants given by (\ref{relations between WC}). We can consider the same elliptic curve $E$ over $\Q_p$ for each prime $p$. There is a cuspidal automorphic representation $\pi=\otimes_p \pi_p$ of $\GL(2,\A_{\Q})$ associated to $E/\Q$ such that $\pi_p$ is the local representation of $\GL(2,\Q_p)$ attached to $E/\Q_p$. Here we have the following diagram:

\begin{tikzcd}
\textcolor{blue}{E/\Q_p}\arrow[d,blue]&E/\Q \text{ of conductor } N \arrow[r, "sym^3", rightsquigarrow,red]  \arrow[d] \arrow[l,blue]& f \in S_{3}(K(\textcolor{red}{M})) \\
\textcolor{blue}{\pi_p}\arrow[r,blue]&\pi=\bigotimes\limits_{p} \pi_p \text{ on }\GL(2,\A_{\Q})\arrow[r] &  \sym^3(\pi)=\bigotimes\limits_{p}  \sym^3(\pi_p) \text{ on }\GSp(4,\A_{\Q}) \arrow[u]
\end{tikzcd}

\noindent Then using similar arguments as in Corollary~\ref{RS-classical}, $E/\Q$ lifts to a paramodular newform $f$ of level $M=a(\sym^3(\pi))=\prod_{p} p^{a(\sym^3(\pi_p))}$. Our goal is to find $M$ in terms of the Weierstrass coefficients of the given elliptic curve $E/\Q$. 

\subsection{The \texorpdfstring{$\sym^3$}{} lift of the local representations attached to elliptic curves}
\label{sym3 of local reprepesentations}
Before discussing the global results on the $\sym^3$ lifting of $E/\Q$, we will look at the $\sym^3$ lifting of the local representation $\pi_p$ of $\GL(2,\Q_p)$ attached to $E/\Q_p$. We have briefly discussed the local $\sym^3$ lifting in Section~\ref{local sym3 section}. In this section, we specifically study the $L$-packet $\sym^3(\pi_p)$ for a local representation $\pi_p$ attached to an elliptic curve $E/\Q_p$ in detail. Even though we describe the results in this section for $\Q_p$, the same results hold for any non-achimedean local field $K$ of characteristic $0$. First, we will compute $a(\sym^3(\pi_p))$ in Tables~\ref{conductor of sym^3 for potentially multiplicative},  \ref{conductor of sym^3 for p>3} and \ref{conductor of sym^3 for p=3}. Let $v_p$ be the $p$-adic valuation on $\Q_p$.
\begin{theorem}
	Let $p$ be any prime and $E$ be an elliptic curve over $\Q_p$ given by a minimal Weierstrass equation of the form (\ref{W.E of EC}) with coefficients in $\Z_p$. Let $\pi_p$ be the irreducible, admissible representation of $\GL(2,\Q_p)$ attached $E$. 
	\begin{enumerate}
		\item[(i)] If $E$ has potentially multiplicative reduction, then $a(\sym^3(\pi_p))$ is given in Table~\ref{conductor of sym^3 for potentially multiplicative}.
		\item[(ii)] If $E$ has additive but potentially good reduction at $p$ for $p\ge 5$, then $a(\sym^3(\pi_p))$ is given in Table~\ref{conductor of sym^3 for p>3}.
		\item[(iii)] If $E$ has additive but potentially good reduction at $p=3$, then $a(\sym^3(\pi_3))$ is given in Table~\ref{conductor of sym^3 for p=3}.
	\end{enumerate} 
\end{theorem}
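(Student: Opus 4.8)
The plan is to reduce everything to the local conductor formula $a(\sym^3(\pi_p))$ already recorded in Table~\ref{gerenal conductor of sym3}, and then plug in the explicit description of $\pi_p$ coming from $E/\Q_p$ in each reduction type. Concretely, for part (i) I would invoke Theorem~\ref{potentially multiplicative}: since $\pi_E=(\gamma(E/\Q_p),\cdot)\St_{\GL(2,\Q_p)}$ is a twist of Steinberg by a quadratic character $\chi=(\gamma(E/\Q_p),\cdot)$, the relevant row of Table~\ref{gerenal conductor of sym3} gives $a(\sym^3(\pi_p))=3$ when $\chi$ is unramified (split or non-split multiplicative reduction, where $a(\pi_p)=v_p(N)=1$) and $a(\sym^3(\pi_p))=4a(\chi^3)=4a(\chi)=4$ when $\chi$ is ramified (additive potentially multiplicative reduction, where $a(\chi)=1$ since $p\neq 2$ forces a tamely ramified quadratic character; if $p=2$ this would need separate care, but the hypothesis rules this out or it is handled elsewhere). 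I would tabulate these against $j(E)\notin\Z_p$ and $v_p(c_4)$, matching the format of Table~\ref{conductor of sym^3 for potentially multiplicative}.

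For part (ii), with $p\ge 5$ and additive potentially good reduction, the dichotomy is governed by whether $(q-1)v_p(\Delta)\equiv 0\pmod{12}$. If it is, Theorem~\ref{principal series} gives $\pi_E=\chi\times\chi^{-1}$ with $a(\chi)=1$ and $\chi|_{W_{q-1}}$ of order $e=12/\gcd(v_p(\Delta),12)$; then the principal series row of Table~\ref{gerenal conductor of sym3} gives $a(\sym^3(\pi_p))=2a(\chi^{-3})+a(\chi)+a(\chi^{-1})=2a(\chi^3)+2$, and I would compute $a(\chi^3)$ by noting that $\chi^3$ is trivial on $1+\p$ and has order $e/\gcd(e,3)$ on the tame quotient, so $a(\chi^3)=1$ unless $e\mid 3$ (i.e.\ $e\in\{1,3\}$) in which case $\chi^3$ is unramified and $a(\chi^3)=0$; this yields $a(\sym^3(\pi_p))\in\{2,4\}$ according to whether $3\mid e$. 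If $(q-1)v_p(\Delta)\not\equiv 0\pmod{12}$, Theorem~\ref{super cuspidal} gives $\pi_E=\omega_{F,\xi}$ with $F/\Q_p$ unramified, $a(\xi)=1$, $\xi|_{\OF_F^\times}$ of order $e$; the unramified supercuspidal row gives $a(\sym^3(\pi_p))=2a(\xi^3)+2a(\xi)=2a(\xi^3)+2$, and the same order computation gives $a(\xi^3)\in\{0,1\}$. I would organize all cases of (ii) by $e$, i.e.\ by $v_p(\Delta)\bmod 12$, as in Table~\ref{conductor of sym^3 for p>3}.

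For part (iii), $p=3$, the work is genuinely finer: here I would run through the ten conditions of Table~\ref{IndexNeron}. For $E$ satisfying one of $\{{\rm P}_2,{\rm P}_3,{\rm P}_6\}$ (recall ${\rm P}_4$ cannot occur over $\Q_3$), Corollary~\ref{corollary for Q3 in PS} gives $\pi_E=\chi\times\chi^{-1}$ with $\chi|_{\Z_3^\times}$ of order $m\in\{2,3,6\}$ and $a(\chi)=1$ (for ${\rm P}_2$) or $a(\chi)=2$ (for ${\rm P}_3,{\rm P}_6$); then $a(\sym^3(\pi_p))=2a(\chi^3)+a(\chi)+a(\chi^{-1})$, where $a(\chi^3)$ depends on $m$ and on $a(\chi)$ — e.g.\ for ${\rm P}_2$, $\chi^3=\chi$ so $a(\chi^3)=1$; for ${\rm P}_3$, $\chi^3$ is trivial on $\Z_3^\times$ so $a(\chi^3)=0$; for ${\rm P}_6$, $\chi^3$ has order $2$ on $\Z_3^\times$ and I would need to check whether it remains ramified at level $\p^2$ or descends to level $\p$, which requires examining the structure of $(\Z/9\Z)^\times\cong\Z/6\Z$. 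For $E$ satisfying one of $\{{\rm S}_4,{\rm S}_3,{\rm S}_6,{\rm S}'_6,{\rm S}''_6\}$, Corollary~\ref{corollary for Q3 for sc} gives $\pi_E=\omega_{F,\xi}$ with $F/\Q_3$ unramified or ramified and $a(\xi)\in\{1,2,4\}$, and I would apply the appropriate supercuspidal row of Table~\ref{gerenal conductor of sym3}: $a(\sym^3(\pi_p))=2a(\xi^3)+2a(\xi)$ if $F$ unramified, $a(\sym^3(\pi_p))=a(\xi^3)+a(\xi)+2$ if $F$ ramified. The main obstacle, and where the bulk of the casework lives, is computing $a(\xi^3)$ (and $a(\chi^3)$) precisely when $a(\xi)=2$ or $a(\xi)=4$: cubing a character of order $6$ kills the $3$-part and leaves an order-$2$ character, and I would need to determine at which conductor level that quadratic character is actually supported. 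This requires the explicit group-structure computations — the splittings in Lemma~\ref{split exact squence} and Lemma~\ref{general split exact sequence}, and the structure $(1+\p\OF_F)/(1+\p^4\OF_F)\cong(\Z/3\Z)^3$ from Lemma~\ref{structure of 1+pF4} — to pin down whether $\xi^3$ is unramified, has conductor $1$, or has conductor $2$ in the ${\rm S}''_6$ case; analyzing $\xi^3$ on the wild part $(1+\p\OF_F)/(1+\p^4\OF_F)$ is the one step I expect to be delicate, since the wild inertia is a $3$-group and cubing it is not injective. Once these conductor computations are done, each of the three tables is assembled by bookkeeping.
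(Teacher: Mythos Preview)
Your proposal is correct and follows essentially the same route as the paper: reduce to the conductor formulas of Table~\ref{gerenal conductor of sym3}, feed in the explicit descriptions of $\pi_p$ from Theorems~\ref{potentially multiplicative}, \ref{principal series}, \ref{super cuspidal} and Corollaries~\ref{corollary for Q3 in PS}, \ref{corollary for Q3 for sc}, and then compute $a(\chi^3)$ or $a(\xi^3)$ case by case using the known order of the character on units. Your identification of the delicate point in part (iii)---that cubing kills the $3$-part of the character, so $\xi^3$ is supported on a strictly smaller level than $\xi$ in the wildly ramified cases ${\rm S}'_6$ and ${\rm S}''_6$---is exactly right, and the structural lemmas you cite are the ones the paper uses.

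One small correction in part (i): the hypothesis does \emph{not} rule out $p=2$. The theorem statement says ``let $p$ be any prime,'' and Table~\ref{conductor of sym^3 for potentially multiplicative} explicitly contains the rows $a(\pi_2)=4$ and $a(\pi_2)=6$ giving $a(\sym^3(\pi_2))=8$ and $12$. The paper handles this directly (not via Table~\ref{gerenal conductor of sym3}, which is stated for $p\ge 3$) by computing $\sym^3((\gamma,\cdot)\St_{\GL(2)})=(\gamma,\cdot)\St_{\GSp(4)}$ from the local parameter and observing that a ramified quadratic character of $\Q_2^\times$ has conductor $2$ or $3$. Your Steinberg formula $4a(\chi)$ is still valid there; you just need to insert these two possible values rather than defer the case.
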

\begin{proof}
	
(i) Assume that $E$ has potentially multiplicative reduction, i.e, $j(E) \not\in \Z_p$. By Theorem~\ref{potentially multiplicative}, the representation $\pi_p$ of $\GL(2,\Q_p)$ attached to $E$ is $\pi_p=(\gamma(E/\Q_p),\cdot)\St_{\GL(2,\Q_p)}$. Using the $\sym^3$ map in (\ref{sym^3map}) and the local parameter of $\pi_p$ in (\ref{localpara for St}) we get $$\sym^3(\pi_p)=(\gamma(E/\Q_p),\cdot)\St_{\GSp(4,\Q_p)},$$ and the conductor $a(\sym^3(\pi_p))$ of $\sym^3(\pi_p)$ is given by
\begin{equation*}
a(\sym^3(\pi_p))=\begin{cases}
4a((\gamma(E/\Q_p),\cdot)) \quad &\text{ if } (\gamma(E/\Q_p),\cdot) \text{ is ramified},\\
3 \quad &\text{ if } (\gamma(E/\Q_p),\cdot) \text{ is unramified}.
\end{cases}
\end{equation*}
When $(\gamma(E/\Q_p),\cdot)$ is ramified, it is well known that
\begin{equation*}
a\left((\gamma(E/\Q_p),\cdot)\right)=\begin{cases}
1\quad &\text{ for } p\ge 3,\\
2\text{ or } 3 \quad &\text{ for } p=2.
\end{cases}
\end{equation*}
Hence, we obtain the column of $a(\sym^3(\pi_p))$ in Table~\ref{conductor of sym^3 for potentially multiplicative}.

\vspace{0.1 in}
\noindent (ii) Let $e=\frac{12}{\text{gcd}(12,v_p(\Delta))}$. Assume that $E$ has additive but potentially good reduction, i.e, $j(E) \in \Z_p$.  Then we consider the following two cases.

\noindent \textbf{Case 1:} Suppose that $(q-1)v_p(\Delta) \equiv 0 \pmod{12}$. By Theorem~\ref{principal series}, the corresponding $\GL(2,\Q_p)$ representation $\pi_p$ is of the form $\pi_p=\chi \times \chi^{-1}$. Then, using the $\sym^3$ map in (\ref{sym^3map}) and the local parameter of $\pi_p$ in (\ref{localpara for PS}), we get $\sym^3(\pi_p)=\chi^4 \times \chi^2 \rtimes \chi^{-3}$ and the conductor $a(\sym^3(\pi_p))$ of $\sym^3(\pi_p)$ is given by
	\begin{equation}
	\label{conductor11}
	a(\sym^3(\pi_p))=2a(\chi^3)+2a(\chi).
	\end{equation}
By Theorem~\ref{principal series}, in this case $a(\chi)=1$. Using the fact that $\chi^e$ is unramified, one verifies easily that \eqref{conductor11} simplifies to the values given in Table~\ref{conductor of sym^3 for p>3}.
 
\noindent \textbf{Case 2:} Suppose that $(q-1)v_p(\Delta) \not\equiv 0  \pmod{12}$. By Theorem~\ref{super cuspidal}, the corresponding $\GL(2,\Q_p)$ representation $\pi_p$ is of the form $\pi_p=\omega_{F,\xi}$, where $F$ is the unramified quadratic extension of $\Q_p$ and $\xi$ is a character of $F^{\times}$. Then, using the $\sym^3$ map in (\ref{sym^3map}), the local parameter of $\pi_p$ as in (\ref{local parameter of D.S.C}), and Remark~\ref{remark that xi sigma=xi inverse}, the local parameter of $\sym^3(\pi_p)$ is \begin{equation}
\label{local para of sym3 of D.S.C}
\ind_{W(\overline{\Q}_p/F)}^{W(\overline{\Q}_p/\Q_p)}(\xi^3) \oplus \ind_{W(\overline{\Q}_p/F)}^{W(\overline{\Q}_p/\Q_p)}(\xi).
\end{equation}
Then, using (\ref{conductor_of_sc}) we get 
\begin{equation}
\label{conductor2}
a(\sym^3(\pi_p))=2a(\xi^3)+2a(\xi). 
\end{equation}
Also, in this case, $a(\xi)=1$ by Theorem~\ref{super cuspidal}. Using the fact that $\xi^e$ is unramified, one verifies easily that \eqref{conductor2} simplifies to the values given in Table~\ref{conductor of sym^3 for p>3}.

\vspace{0.1 in}
\noindent (iii) The proof of this part is similar to part (ii), so we will skip the details. Here also we consider two different cases. 

\noindent \textbf{Case 1:} By Corollary~\ref{corollary for Q3 in PS}, when $E$ satisfies one of the conditions in $\lbrace{{\rm P}_2, {\rm P}_3, {\rm P}_6\rbrace}$, the $\GL(2,\Q_3)$ representation $\pi_3$ is of the form $\pi_3=\chi \times \chi^{-1}$. Then, using similar arguments as in case 1 of part (ii), we get 
$a(\sym^3(\pi_3))=4$ when $E$ satisfies ${\rm P}_2$ or ${\rm P}_3$, and $a(\sym^3(\pi_3))=6$ when $E$ satisfies the condition ${\rm P}_6$.

\noindent \textbf{Case 2:} By Corollary~\ref{corollary for Q3 for sc}, when $E$ satisfies one of the conditions in $\lbrace{{\rm S}_4, {\rm S}_3, {\rm S}_6,{\rm S}^{'}_6,{\rm S}^{''}_6 \rbrace}$, the corresponding $\GL(2,\Q_3)$ representation $\pi_3$ is of the form $\pi_3=\omega_{F,\xi}$. Then, using the description of $F$ and $\xi$ from Corollary~\ref{corollary for Q3 for sc} and applying similar reasoning as in case 2 of part (ii), we obtain the column of $a(\sym^3(\pi_3))$ in Table~\ref{conductor of sym^3 for p=3}.
\end{proof}

\begin{table}
\caption {Conductor of $\sym^3(\pi_p)$ when $E/\Q_p$ has potentially multiplicative reduction. Here $\gamma=\gamma(E/\Q_p)$ is the $\gamma$-invariant of $E/\Q_p$ and $(\gamma,\cdot)$ is the Hilbert symbol.}
\label{conductor of sym^3 for potentially multiplicative}
\renewcommand{\arraystretch}{1.5}
\renewcommand{\arraycolsep}{0.24cm}
\[ \begin{array}{ ccccccc } 
\toprule
\text{Condition}&\GL(2,\Q_p)& \sym^3(\pi_p)& \text{Condition}&\text{Prime}&a(\pi_p)&a(\sym^3(\pi_p))\\[-1.1ex]
\text{on } E/\Q_p&\text{rep. }\pi_p&&\text{on }\pi_p &p &&\\
\toprule
 j(E) \not\in \Z_p&(\gamma,\cdot)\text{St}_{\GL(2)}&(\gamma,\cdot)\text{St}_{\GSp(4)}&\multirow{2}{*} { \begin{minipage}{11ex}
 	\vspace*{-0.07in}
 	\begin{center}
 	$(\gamma,\cdot)$
 	$\text{ is ram.}$
 	\end{center}
 	\end{minipage}}&\ge 3&2&4\\  
 \cmidrule{5-7}
&&&&2&4&8\\ 
&&&&&6&12\\ 
\cmidrule{4-7}
&&&\multirow{2}{*} { \begin{minipage}{11ex}
	\vspace*{-0.07in}
	\begin{center}
	$(\gamma,\cdot)$
	$\text{ is urm.}$
	\end{center}
	\end{minipage}}&&1&3\\[3ex]
\bottomrule
\end{array}\]
\caption {Conductor of $\sym^3(\pi_p)$ with $p\ge 5$ when $E/\Q_p$ has additive but potentially good reduction. Here we denote $\ind_{W(\overline{\Q}_p/F)}^{W(\overline{\Q}_p/\Q_p)}(\xi)$ as $\ind_{F}^{\Q_p} \xi$.}
\label{conductor of sym^3 for p>3}
\renewcommand{\arraystretch}{1.5}
\renewcommand{\arraycolsep}{0.15cm}
\[ \begin{array}{ cccccc } 
\toprule
\text{Condition} &\GL(2,\Q_p)&\text{Local parameter} & \text{Condition} &a(\pi_p)&a(\sym^3(\pi_p)) \\[-1.1ex]
\text{on } E/\Q_p&\text{rep. }\pi_p &\text{of }\sym^3(\pi_p) &\text{on } \pi_p& &\\
\toprule
\multirow{2}{*} { \noindent \begin{minipage}{20ex}
	\vspace*{-0.03 in}
	\begin{center}
	$j(E) \in \Z_p $
	$\scriptstyle{(p-1)v_p(\Delta) \equiv 0\ \text{mod } 12}$
	\end{center}
	\end{minipage}}
& \multirow{2}{*} {\begin{minipage}{9ex}
	\vspace*{-0.09 in}
	\begin{center}
	$\chi \times \chi^{-1}$
	$a(\chi)=1$
	\end{center}
	\end{minipage}}
&\chi^3\oplus \chi\oplus\chi^{-1}\oplus\chi^{-3} &\chi|_{\Z_p^{\times}}^2=1&2&4\\ 
\cmidrule{4-6}
&&&\chi|_{\Z_p^{\times}}^3=1&2&2\\ 
\cmidrule{4-6}
&&&\chi|_{\Z_p^{\times}}^4=1&2&4\\  
\cmidrule{4-6}
&&&\chi|_{\Z_p^{\times}}^6=1&2&4\\  
\toprule
 \multirow{3}{*} {\begin{minipage}{20ex}
	\vspace*{-0.27 in}
	\begin{center}
	$j(E) \in \Z_p $
	$\scriptstyle{(p-1)v_p(\Delta) \not\equiv 0\ \text{mod } 12}$
	\end{center}
	\end{minipage}}&
\multirow{3}{*} { \begin{minipage}{11ex}
	\vspace*{0.08in}
	\begin{center}
	$\pi= \omega_{F,\xi}$
	$\scriptstyle{F/\Q_p\text{ is unr.}}$
	$\scriptstyle{a(\xi)=1}$\\
	$\scriptstyle{\xi(\varpi_F)=1}$
	\end{center}
	\end{minipage}}
&\ind_F^{\Q_p} \xi \oplus \ind_F^{\Q_p} \xi^3&\xi|_{\OF_F^{\times}}^3=1&2&2\\  
\cmidrule{4-6}
&&&\xi|_{\OF_F^{\times}}^4=1&2&4\\
\cmidrule{4-6}   
&&&\xi|_{\OF_F^{\times}}^6=1&2&4\\ 
\bottomrule
\end{array}\]
\end{table}

\begin{table}
\caption {Conductor of $\sym^3(\pi_3)$ when $E/\Q_3$ has additive but potentially good reduction. Here we denote $\ind_{W(\overline{\Q}_3/F)}^{W(\overline{\Q}_3/\Q_3)}(\xi)$ as $\ind_{F}^{\Q_3} \xi$.}
\label{conductor of sym^3 for p=3}
\renewcommand{\arraystretch}{1.2}
\renewcommand{\arraycolsep}{0.25cm}
\[ \begin{array}{ cccccc } 
	\toprule
	\text{Condition} & \GL(2,\Q_3) & \text{Local }&\text{Condition}& a(\pi_3)& a(\sym^3(\pi_3)) \\[-0.3ex]
		\text{from}& \text{rep.}& \text{parameter}&\text{on }&&\\[-0.2ex]
        \text{Table }\ref{IndexNeron} &\pi_3& \text{of }\sym^3(\pi_3)&\pi_3&&\\
		  \toprule
{\rm P}_2& \chi \times \chi^{-1} & \chi^3\oplus \chi\oplus\chi^{-1}\oplus\chi^{-3} & a(\chi)=1 & 2  & 4\\
&&&\chi|_{\Z_3^{\times}}^2=1&&\\
\midrule
{\rm P}_3& \chi \times \chi^{-1}& \chi^3\oplus \chi\oplus\chi^{-1}\oplus\chi^{-3} &a(\chi)=2&4&4\\
&&&\chi|_{\Z_3^{\times}}^3=1&&\\
\midrule
{\rm P}_6& \chi \times \chi^{-1}&\chi^3\oplus \chi\oplus\chi^{-1}\oplus\chi^{-3}  &a(\chi)=2&4&6\\
&&&\chi|_{\Z_3^{\times}}^6=1&&\\
\midrule 
{\rm S}_4&\omega_{F,\xi}&\ind_F^{\Q_3} \xi \oplus \ind_F^{\Q_3} \xi^3&a(\xi)=1&2&4\\
&{\scriptstyle F = \Q_3(i)}&&\xi(\varpi_F)=-1&&\\
&\text{(unramified)}&&\xi|_{\OF_F^\times}^4=1&&\\
\midrule
{\rm S}_3& \pi=\omega_{F,\xi}&\ind_F^{\Q_3} \xi \oplus \ind_F^{\Q_3} \xi^3& a(\xi)=2&4&4\\
&{\scriptstyle F={\Q_3}(\sqrt{\Delta})}&&\xi(\varpi_F)=-1&&\\
&\text{(unramified)}&&\xi|_{\OF_F^\times}^3=1&&\\
\midrule
{\rm S}_6&\pi=\omega_{F,\xi}&\ind_F^{\Q_3} \xi \oplus \ind_F^{\Q_3} \xi^3&a(\xi)=2&4&6\\
&{\scriptstyle F={\Q_3}(\sqrt{\Delta})}&&\xi(\varpi_F)=-1&&\\
&\text{(unramified)}&&\xi|_{\OF_F^\times}^6=1&&\\
\midrule
{\rm S}^{'}_6&\pi=\omega_{F,\xi}&\ind_F^{\Q_3} \xi \oplus \ind_F^{\Q_3} \xi^3&a(\xi)=2&3&5\\
&{\scriptstyle F={\Q_3}(\sqrt{\Delta})}&&\xi|_{\OF_F^\times}^6=1&&\\
&\raisebox{0.2 in}{\text{(ramified)}}&&&&\\[-0.2in]
\midrule
{\rm S}^{''}_6&\pi=\omega_{F,\xi}&\ind_F^{\Q_3} \xi \oplus \ind_F^{\Q_3} \xi^3&a(\xi)=4&5&7\\
&{\scriptstyle F={\Q_3}(\sqrt{\Delta})}&&\xi|_{\OF_F^\times}^6=1&&\\
&\raisebox{0.2 in}{\text{(ramified)}}&&&&\\[-0.2in]
\bottomrule
\end{array}\]
\end{table}
\noindent \textbf{Other data of the $L$-packet $\sym^3(\pi_p)$:}

\noindent Now, we will describe the representation type, the $\varepsilon$-factor, and the spin $L$-factor of the $L$-packet $\sym^3(\pi_p)$ in Table~\ref{table of local data for p>5} and Table~\ref{table of local data for p=3}. It is easy to find the local parameter of $\sym^3(\pi_p)$ using the local parameter of $\pi_p$ as in \eqref{localpara for PS}, \eqref{localpara for St}, \eqref{local parameter of D.S.C}, and the $\sym^3$ map in \eqref{sym^3map}. Then, one can determine the representation type of $\sym^3(\pi_p)$ using its local parameter and Table~A.7.\ of \cite{RobertsSchmidt2007}. Here we will discuss the non-trivial case when $\pi_p$ is a dihedral supercuspidal representation, i.e., $\pi_p=\omega_{F,\xi}$. Note that $\ind_{W(\overline{\Q}_p/F)}^{W(\overline{\Q}_p/\Q_p)}(\xi)$ in \eqref{local para of sym3 of D.S.C} is always irreducible. So, we need to check whether $\ind_{W(\overline{\Q}_p/F)}^{W(\overline{\Q}_p/\Q_p)}(\xi^3)$ is reducible or irreducible. We have the following three cases:
\begin{enumerate}
\item[(1)] $\ind_{W(\overline{\Q}_p/F)}^{W(\overline{\Q}_p/\Q_p)}(\xi^3)$ is irreducible and isomorphic to $ \ind_{W(\overline{\Q}_p/F)}^{W(\overline{\Q}_p/\Q_p)}(\xi)$. This happens if and only if $\xi^3=\xi^{\sigma}$ on $F^{\times}$, i.e., $\xi^4=1$.
\item[(2)] $\ind_{W(\overline{\Q}_p/F)}^{W(\overline{\Q}_p/\Q_p)}(\xi^3)$ is reducible. This happens if and only if $\xi^3=(\xi^3)^{\sigma}$ on $F^{\times}$, i.e., $\xi^6=1$.
\item[(3)] $\ind_{W(\overline{\Q}_p/F)}^{W(\overline{\Q}_p/\Q_p)}(\xi^3)$ is irreducible and not isomorphic to $ \ind_{W(\overline{\Q}_p/F)}^{W(\overline{\Q}_p/\Q_p)}(\xi)$. This happens if and only if $\xi^3\neq \xi^{\sigma}$ and $\xi^3\neq(\xi^3)^{\sigma}$ on $F^{\times}$, i.e., $\xi^4\neq 1$ and $\xi^6\neq 1$.
\end{enumerate}
Then, using the description of $\xi$ in Tables~\ref{table of local data for p>5} and \ref{table of local data for p=3}, and Table~A.7.\ of \cite{RobertsSchmidt2007}, one can easily check that $\sym^3(\omega_{F,\xi})$ is of type VIII  for case (1), type X for case (2), and it is supercuspidal for case (3).

We can also find the spin $L$-factor $L(s,\sym^3(\pi_p))$ of degree $4$ using the local parameter of $\sym^3(\pi_p)$. In most cases, $L(s,\sym^3(\pi_p))=1$. If $E/\Q_p$ has potentially good reduction at $p\ge 3$, then $L(s,\sym^3(\pi_p))$ is nontrivial precisely when $v_p(\Delta)\equiv 0 \text{ mod } 4$.
We omit the column for $L(s,\sym^3(\pi_3))$ in Table~\ref{table of local data for p=3} since it is similar to Table~\ref{table of local data for p>5}.

Also, one can easily look up $\varepsilon\left(\frac 12,\sym^3(\pi_p)\right)$ for $\pi_p=\chi \times \chi^{-1}$ and $(\gamma(E/\Q_p),\cdot)\text{St}_{\GL(2)}$ using Table~A.9.\ in \cite{RobertsSchmidt2007}. (The additive character implicit in $\varepsilon\left(\frac 12,\sym^3(\pi_p)\right)$ is understood to have conductor exponent $0$.) Finding the $\varepsilon$-factor $\varepsilon\left(\frac 12,\sym^3(\pi_p)\right)$ for a dihedral supercuspidal representation $\pi_p=\omega_{F,\xi}$ requires more work. In this case, we have 
 \begin{equation}
\begin{split}
\varepsilon\left(\frac 12,\sym^3(\omega_{F,\xi})\right)
&=\varepsilon\left(\frac 12, \omega_{F,\xi^3},\psi\right)\varepsilon\left(\frac 12, \omega_{F,\xi},\psi\right)\\
&= \varepsilon\left(\frac 12,\chi_{F/\Q_p},\psi\right)^2\varepsilon\left(\frac 12,\xi^3,\psi\circ\text{tr}\right) \varepsilon\left(\frac 12,\xi,\psi\circ\text{tr}\right).
\end{split}
\end{equation}
Here, $\psi$ is a non-trivial character of $\Q_p$ with the conductor (exponent) $a(\psi)=0$ and tr is the trace map from $F$ to $\Q_p$. When $F/\Q_p$ is unramified, we compute $\varepsilon\left(\frac 12,\xi,\psi\circ\text{tr}\right)$ and $\varepsilon\left(\frac 12,\xi^3,\psi\circ\text{tr}\right)$ for $\pi_p=\omega_{F,\xi}$ using Theorem~3 from \cite{FrohlichQueyrut1973}, and we get 
 \begin{equation}
\varepsilon\left(\frac 12,\sym^3(\omega_{F,\xi})\right)=(-1)^{a(\xi)+a(\xi^3)}.
\end{equation}
This gives us $\varepsilon\left(\frac 12,\sym^3(\pi_p)\right)$ in Table~\ref{table of local data for p>5} and Table~\ref{table of local data for p=3} when $\pi_p=\omega_{F,\xi}$ except for the cases when $E/\Q_3$ satisfies ${\rm S}_6^{'}$ and ${\rm S}_6^{''}$. If $E/\Q_3$ satisfies ${\rm S}_6^{'}$ or ${\rm S}_6^{''}$, using a similar argument as in Lemma~\ref{structure of 1+pF4}, we get the following description of the quadratic extension
 \begin{equation}
 \label{ramified quad field}
 F=\begin{cases}
\Q_3\left(\sqrt{\left(\frac {\Delta'}{3}\right)3}\right) &\text{ if } E \text{ satisfies } {\rm S}_6^{'},\\
 \Q_3(\sqrt{-3}) &\text{ if } E \text{ satisfies } {\rm S}_6^{''},
 \end{cases}
 \end{equation}
where $\Delta'=3^{-v_3(\Delta)}\Delta$. Using \eqref{ramified quad field} and the integration formula for the $\varepsilon$-factors as in $(\epsilon3)$ of \S11 of \cite{Rohrlich1994} we show that 
 \begin{equation}
 \varepsilon\left(\frac 12,\sym^3(\omega_{F,\xi})\right)=\varepsilon\left(\frac 12, \omega_{F,\xi},\psi\right) \text{ if $E/\Q_3$ satisfies  } {\rm S}_6^{'} \text{ or } {\rm S}_6^{''},
\end{equation}
for each possible character $\xi$ of $F/\Q_3$. 
\begin{table}
\caption{The representation type, $\varepsilon$-factor, spin $L$-factor of $\sym^3(\pi_p)$ attached to $E/\Q_p$ with $p\ge 5$. Here $E$ has either potentially multiplicative reduction or potentially good reduction and $\gamma=\gamma(E/\Q_p)$ is the $\gamma$-invariant of $E/\Q_p$.}
	\label{table of local data for p>5}
	\renewcommand{\arraystretch}{2}
	\renewcommand{\arraycolsep}{0.04cm}
	\[ \begin{array}{ cccccc} 
	\toprule
\text{Condition} &\GL(2,\Q_p)& \text{Condition} &\text{Rep.}&\varepsilon\left(\frac 12,\sym^3(\pi_p)\right)& L(s,\sym^3(\pi_p)) \\[-2.4ex]
\text{on } &\text{ rep.}&\text{on} & \text{type of}&&\\[-2.2ex]
E/\Q_p& \pi_p &\pi_p&\sym^3(\pi_p)&&\\
	\toprule
 j(E)\not\in\Z_p&(\gamma,\cdot)\text{St}_{\GL(2)}&(\gamma,\cdot)&\text{IVa}&1&1\\  [-0.14in]
&&\text{ is ram.}&&&\\ 
	\cmidrule{3-6}
	&&(\gamma,\cdot) \text{ is}&\text{IVa}& -1&  \frac{1}{1-p^{-3/2-s}}\\ [-0.14in]
	&&\text{trivial}&&&\\ 
	\cmidrule{3-6}
	&&(\gamma,\cdot)\text{ is unr.}&\text{IVa}&1& \frac{1}{1+p^{-3/2-s}}\\ [-0.14in]
	&&\text{nontrivial}&&&\\ 
	\toprule
  \multirow{2}{*} {\begin{minipage}{19ex}
	\vspace*{-0.06 in}
	\begin{center}
	$j(E) \in \Z_p $
	$\scriptstyle{(p-1)v_p(\Delta) \equiv 0\ \text{mod } 12}$
	\end{center}
	\end{minipage}}&  \multirow{2}{*} { \begin{minipage}{9ex}
	\vspace*{-0.09 in}
	\begin{center}
	$\chi \times \chi^{-1}$
	$a(\chi)=1$
	\end{center}
	\end{minipage}}&\chi|_{\Z_p^{\times}}^2=1&\text{I}&1&1\\ 
 	\cmidrule{3-6}
&&\chi|_{\Z_p^{\times}}^3=1&\text{I}&1&\scriptstyle{ L(s,\chi^3)L(s,\chi^{-3})}\\  
	\cmidrule{3-6}
	&&\chi|_{\Z_p^{\times}}^4=1&\text{I}&1&1\\  
	\cmidrule{3-6}
	&&\chi|_{\Z_p^{\times}}^6=1&\text{I}&1&1\\  
	\toprule
 \multirow{3}{*} { \begin{minipage}{19ex}
	\vspace*{-0.27 in}
	\begin{center}
	$j(E) \in \Z_p $
	$\scriptstyle{(p-1)v_p(\Delta) \not\equiv 0\ \text{mod } 12}$
	\end{center}
	\end{minipage}}& \multirow{3}{*} { \begin{minipage}{16ex}
	\vspace*{0.08in}
	\begin{center}
	$\pi= \omega_{F,\xi}$
	$\scriptstyle{F/\Q_p\text{ is unr.}}$
	$\scriptstyle{a(\xi)=1}$\\
		$\scriptstyle{\xi(\varpi_F)=1}$
	\end{center}
	\end{minipage}}&\xi|_{\OF_F^{\times}}^3=1&\text{X}&-1&\frac{1}{1+p^{-2s}}\\  
		\cmidrule{3-6}
&&\xi|_{\OF_F^{\times}}^4=1&\text{VIII}&1&1\\  
		\cmidrule{3-6}
	&&\xi|_{\OF_F^{\times}}^6=1&\text{X}&1&1\\  
	\bottomrule
	\end{array}\]
\end{table}

\begin{table}
\caption{The Langlands parameter, representation type, and $\varepsilon$-factor of $\sym^3(\pi_3)$ attached to $E/\Q_3$. Here we denote $\ind_{W(\overline{\Q}_3/F)}^{W(\overline{\Q}_3/\Q_3)}(\xi)$ as $\ind_{F}^{\Q_3} \xi$. Also, $\mu$ is an irreducible parameter of $\GL(2,\Q_3)$ with $\text{det}(\mu)=\chi_{F/\Q_3}$, and $\sigma$ is a character of $\Q_3^{\times}$ such that $\xi^3=\sigma\circ N_{F/\Q_3}$. The additive character implicit in the $\varepsilon$-factors is understood to have conductor exponent $0$.}
\label{table of local data for p=3}
\renewcommand{\arraystretch}{1.2}
\renewcommand{\arraycolsep}{0.19cm}
	\[ \begin{array}{ ccccccc } 
	\toprule
	\text{Condition} & \GL(2,\Q_3) &\text{Condition}&\text{Langlands}& \text{Rep.}&\varepsilon\left(\frac 12,\sym^3(\pi_3)\right)\\
	\text{from}&\text{rep.}&\text{on }&\text{parameter}& \text{type of}&\\
	\text{table }\ref{IndexNeron}&\pi_3&\pi_3&\text{for }\sym^3(\pi_3)&\sym^3(\pi_3)&\\
	\toprule
	{\rm P_2} &\chi \times \chi^{-1} & a(\chi)=1  & \chi^3\oplus \chi\oplus \chi^{-1}\oplus\chi^{-3} &{\rm I}& 1\\
	&&\chi|_{\Z_3^{\times}}^2=1&&\\
	\midrule
	{\rm P_3 }& \chi \times \chi^{-1}&a(\chi)=2& \chi^3\oplus \chi\oplus \chi^{-1}\oplus\chi^{-3} &{\rm I}&1\\
	&&\chi|_{\Z_3^{\times}}^3=1&&\\
	\midrule 
	{\rm P_6}&\chi \times \chi^{-1} &a(\chi)=2& \chi^3\oplus \chi\oplus \chi^{-1}\oplus\chi^{-3} &{\rm I}&1\\
	&&\chi|_{\Z_3^{\times}}^6=1&&\\
	\midrule	
	{\rm S_4}&\omega_{F,\xi}&a(\xi)=1&\varphi_{\pi_3}\oplus\varphi_{\pi_3}&{\rm VIII}&1\\
	&{\scriptstyle F={\Q_3}(i)}&\xi(\varpi_F)=-1& \varphi_{\pi_3}=\ind_{F}^{\Q_3} \xi&\\
	&&\xi|_{\OF_F^\times}^4=1&&\\
	\midrule
	{\rm S_3}&\omega_{F,\xi}& a(\xi)=2&\sigma \chi_{F/\Q_3}\oplus\sigma\mu\oplus\sigma &{\rm X}&1\\
	&{\scriptstyle F={\Q_3}(\sqrt{\Delta})}&\xi(\varpi_F)=-1&\sigma\mu=\ind_{F}^{\Q_3} \xi&\\
	&&\xi|_{\OF_{F}^\times}^3=1&&\\
	\midrule
	{\rm S_6}&\omega_{F,\xi}&a(\xi)=2&\sigma \chi_{F/{\Q_3}}\oplus \sigma\mu\oplus\sigma &{\rm X}&-1\\
	&{\scriptstyle F={\Q_3}(\sqrt{\Delta})}&\xi(\varpi_F)=-1&\sigma\mu=\ind_{F}^{\Q_3} \xi&\\
	&&\xi|_{\OF_F^\times}^6=1&&\\
	\midrule
		{\rm S}_6^{'}&\omega_{F,\xi}&a(\xi)=2&\ind_F^{\Q_3} \xi \oplus \ind_F^{\Q_3} \xi^3&\text{super-}&\varepsilon\left(\frac 12,\pi_3\right)\\
	&{\scriptstyle F={\Q_3}(\sqrt{\Delta})}&\xi|_{\OF_F^\times}^6=1&&\text{cuspidal}&\\
	\midrule
	{\rm S}_6^{''}&\omega_{F,\xi}&a(\xi)=4&\ind_F^{\Q_3} \xi \oplus \ind_F^{\Q_3} \xi^3&\text{super-}&\varepsilon\left(\frac 12,\pi_3\right)\\
	&{\scriptstyle F={\Q_3}(\sqrt{\Delta})}&\xi|_{\OF_F^\times}^6=1&&\text{cuspidal}&\\
	\bottomrule
	\end{array}\]
\end{table}
\subsection{Global results}
\label{Section global results}
In this section, we will discuss the global results on the $\sym^3$ lifting of  non-CM elliptic curves over $\Q$. These results are special and more precise versions of Theorem~\ref{RS} and Corollary~\ref{RS-classical} for non-CM elliptic curves.
\begin{theorem}
\label{global theorem}
Let $\pi=\bigotimes_{p}\pi_p$ be the cuspidal automorphic representation of $\GL(2,\A_{\Q})$ with trivial central character attached to a non-CM elliptic curve $E/\Q$ given by the global minimal Weierstrass equation \eqref{W.E of EC} with coefficients in $\Z$. So, the conductor $a(\pi)$ of $\pi$ equals the conductor $N$ of $E$. Suppose that $E$ has good or potentially multiplicative reduction at $p=2$. Then there exists a cuspidal automorphic representation $\Pi=\bigotimes_{p}\Pi_p$ of $\GSp(4,\A_{\Q})$ with trivial central character which is unramified at each prime $p$ not dividing $N$, such that
	\begin{enumerate}
		\item[(i)] $\Pi_p$ is a generic representation at each finite prime $p$. 
		\item[(ii)] $\Pi_{\infty}$ is a holomorphic discrete series representation with  minimal $K$-type $(3,3)$.
		\item[(iii)] $L(s,\Pi)=L(s, \pi, \sym^3)$.
	\end{enumerate}
Moreover, the conductor of $E$ and the conductor $a(\Pi)$ of $\Pi$ are as follows
\begin{equation}
\label{N and M}
N= \prod_{p \mid \Delta}p^i \quad\text{ and }	\quad a(\Pi)= \prod_{p \mid \Delta}p^k,
\end{equation}
where the values of $i, k$ are given in Table~\ref{table for i,k} for each $p$ dividing the discriminant $\Delta$ of \eqref{W.E of EC}.
\end{theorem}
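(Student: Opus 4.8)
The plan is to deduce the existence of $\Pi$ directly from Theorem~\ref{RS} (equivalently Corollary~\ref{RS-classical}) applied to the weight-$2$, level-$N$, non-CM newform with trivial central character attached to $E$, and then to compute $a(\Pi)=\prod_p a(\Pi_p)$ place by place, using the tables of the preceding subsection together with the local descriptions of Section~\ref{Section 2}. First I would note that $\pi=\bigotimes_p\pi_p$ is the automorphic representation of a holomorphic non-CM newform of even weight $k=2$ with trivial central character, so Theorem~\ref{RS} yields a cuspidal $\Pi=\bigotimes_p\Pi_p$ of $\GSp(4,\A_{\Q})$ with trivial central character, unramified outside $N$, satisfying $L(s,\Pi)=L(s,\pi,\sym^3)$, with each $\Pi_p$ generic and $\Pi_\infty$ a holomorphic discrete series; this gives (i) and (iii). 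For (ii) I would compute the archimedean parameter: applying $\sym^3$ via \eqref{sym^3map} to the weight-$2$ parameter of $\pi_\infty$ gives the $L$-parameter of the $\GSp(4,\R)$ holomorphic discrete series with minimal $K$-type $(3,3)$, in agreement with the weight computation at the end of the proof of Corollary~\ref{RS-classical}, where $\det^{k+1}\times\sym^{k-2}=\det^3$ for $k=2$.

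Next, since $\Pi_p$ is the generic member of the local packet $\sym^3(\pi_p)$, its conductor equals the local Langlands invariant $a(\sym^3(\pi_p))$, so $a(\Pi)=\prod_p a(\Pi_p)=\prod_{p\mid N}a(\sym^3(\pi_p))$ because $\pi_p$ is unramified for $p\nmid N$; in particular the product ranges over a subset of the primes dividing $\Delta$. Then I would run the case analysis over $p\mid N$. For $p\ge5$: when $E$ has multiplicative reduction at $p$, or additive but potentially multiplicative reduction, use Theorem~\ref{potentially multiplicative} and Table~\ref{conductor of sym^3 for potentially multiplicative}; when $E$ has additive but potentially good reduction, use Theorem~\ref{principal series} or Theorem~\ref{super cuspidal} according as $(q-1)v_p(\Delta)\equiv0\bmod12$ or not, together with Table~\ref{conductor of sym^3 for p>3}, noting that here $a(\pi_p)=2$ so $v_p(N)=2$. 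For $p=3$: the multiplicative and potentially multiplicative cases are as above; for additive but potentially good reduction I would use the lemma classifying the conditions of Table~\ref{IndexNeron}, Corollary~\ref{corollary for Q3 in PS} and Corollary~\ref{corollary for Q3 for sc}, and Table~\ref{conductor of sym^3 for p=3}, reading $v_3(N)$ off the last column of Table~\ref{IndexNeron}. For $p=2$: by hypothesis $E$ has good reduction (contributing $2^0$) or potentially multiplicative reduction, so Theorem~\ref{potentially multiplicative} and Table~\ref{conductor of sym^3 for potentially multiplicative} apply; the additive-but-potentially-good case at $p=2$ is excluded, which is why it does not occur in the statement or in Table~\ref{table for i,k}.

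Assembling these contributions, with $i=v_p(N)$ determined from Tate's algorithm (Table~\ref{IndexNeron} in residual characteristic $3$, and $v_p(N)\le2$ in residual characteristic $\ge5$) and $k=a(\sym^3(\pi_p))$ read from Tables~\ref{conductor of sym^3 for potentially multiplicative}, \ref{conductor of sym^3 for p>3}, \ref{conductor of sym^3 for p=3}, yields the formulas $N=\prod_{p\mid\Delta}p^i$ and $a(\Pi)=\prod_{p\mid\Delta}p^k$ and produces the entries of Table~\ref{table for i,k}. I expect the main obstacle to be the bookkeeping in the residual characteristic $3$ case: one must match each Kodaira–N\'eron condition of Table~\ref{IndexNeron} to the correct local representation via Corollaries~\ref{corollary for Q3 in PS} and \ref{corollary for Q3 for sc}, track the conductors $a(\chi^3)$ and $a(\xi^3)$ (which depend on whether the relevant character has order divisible by $3$), verify the consistency of $v_3(N)$ with $a(\pi_3)$ in each row, and fold in the potentially multiplicative sub-cases at $p=3$ and $p\ge5$ without double counting. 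Everything else is a direct application of results already established in Sections~\ref{Section 1} and \ref{Section 2}.
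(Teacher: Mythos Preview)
Your proposal is correct and follows essentially the same approach as the paper: invoke Theorem~\ref{RS} for the existence and properties (i)--(iii) of $\Pi$, then read off $i=a(\pi_p)$ and $k=a(\sym^3(\pi_p))$ from Tables~\ref{conductor of sym^3 for potentially multiplicative}, \ref{conductor of sym^3 for p>3}, and \ref{conductor of sym^3 for p=3} to assemble \eqref{N and M}. Your explicit verification of the minimal $K$-type $(3,3)$ via the $k=2$ case of the weight formula in Corollary~\ref{RS-classical} is a small elaboration the paper leaves implicit, but otherwise the structure is identical.
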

\begin{proof}
Most of the statements of the theorem follow from Theorem~\ref{RS}. We only need to show that the conductor $N$ of $E$ and the conductor $a(\Pi)$ of $\Pi=\sym^3(\pi)$ are given by \eqref{N and M}. Since $N=a(\pi)=\otimes_p p^{a(\pi_p)}$ and $a(\Pi)=\otimes_p p^{a(\Pi_p)}$, considering all possible values of $a(\pi_p)$ and $a(\Pi_p)=a(\sym^3(\pi_p))$ from Tables~\ref{conductor of sym^3 for potentially multiplicative}, \ref{conductor of sym^3 for p>3}, and \ref{conductor of sym^3 for p=3}, we see that $N$ and $a(\Pi)$ are given by \eqref{N and M} where $i,k$ are given in Table~\ref{table for i,k}. This concludes the proof.
\end{proof}
\begin{table}
\caption {Exponents $i, k$ of $p$ in $N=a(\pi)$ and $a(\sym^3(\pi))$ respectively. We use this table in Theorem~\ref{global theorem} and Corollary~\ref{global corollary1}. Here, $e=\frac{12}{\left(v_p(\Delta),12\right)}$ with $v_p$ being the $p$-adic valuation.}
\label{table for i,k}
\renewcommand{\arraystretch}{1}
\renewcommand{\arraycolsep}{0.45 cm}
\[ \begin{array}{ ccccc } 
\toprule
\text{Condition} &\text{Condition}&i & k& \text{Reduction type}\\
\text{on }  p\mid N &\text{on } E&&&\text{of } E \text{ at } p\\
\toprule
\text{for any } p&v_p(\Delta) > 0, &1&3&\text{multiplicative}\\
p \mid\mid N&v_p(c_4)=0 &&&\text{reduction}\\
\midrule
\text{for any } p&j(E) \not\in \Z_p&i=2 \qquad \text{ if }p\ge 3&2i&\text{additive,}\\
p^i \mid\mid N&&i\in \left\{4,6\right\} \text{ if } p=2&&\text{potentially}\\
i \ge 2 &&&&\text{multiplicative}\\
\midrule
\text{for } p \ge 5&j(E)\in \Z_p,&2&4&\text{additive,}\\
p^2 \mid\mid N& e \in \left\{2,4,6\right\}&&&\text{potentially good}\\
\midrule
\text{for } p \ge 5&j(E)\in \Z_p,&2&2&\text{additive,}\\
p^2 \mid\mid N&e=3&&&\text{potentially good}\\
\midrule
\text{for } p=3&j(E)\in \Z_3,&2&4&\text{additive,}\\
p^2 \mid\mid N& E \text{ satisfies} &&&\text{potentially}\\
&{\rm P_2\ or\ S_4}&&&\text{good}\\
&\text{from Table } \ref{IndexNeron}&&&\\
\midrule
\text{for } p=3&j(E)\in \Z_3,&3&5&\text{additive,}\\
p^3 \mid\mid N& E \text{ satisfies } {\rm S}^{'}_6&&&\text{potentially}\\
&\text{from Table } \ref{IndexNeron}&&&\text{good}\\
\midrule
\text{for } p=3&j(E)\in \Z_3, &4&4&\text{additive,}\\
p^4 \mid\mid N& E \text{ satisfies} &&&\text{potentially}\\
&{\rm P_3\ or\ S_3}&&&\text{good}\\
&\text{from Table } \ref{IndexNeron}&&&\\
\midrule
\text{for } p=3&j(E)\in \Z_3,&4&6 &\text{additive,}\\
p^4 \mid\mid N& E \text{ satisfies} &&&\text{potentially}\\
&{\rm P_6\ or\ S_6}&&&\text{good}\\
&\text{from Table } \ref{IndexNeron}&&&\\
\midrule
\text{for } p=3&j(E)\in \Z_3, &5&7&\text{additive,}\\
p^5 \mid\mid N& E \text{ satisfies } {\rm S}^{''}_6&&&\text{potentially}\\
&\text{from Table } \ref{IndexNeron}&&&\text{good}\\
\bottomrule
\end{array}\]
\end{table}
\noindent Since there is a natural correspondence between the cuspidal automorphic representations of $\GSp(4,\A_{\Q})$ and paramodular forms as discussed in Corollary~\ref{RS-classical}, we get the following corollary as an immediate consequence of Theorem~\ref{global theorem}.
\begin{corollary}
	\label{global corollary1}
	Let $E$ be a non-CM elliptic curve over $\Q$ given by the global minimal Weierstrass equation \eqref{W.E of EC} with coefficients in $\Z$. Suppose that $E$ has good or potentially multiplicative reduction at $p=2$. Then there is a cuspidal paramodular newform $f$ of degree $2$, weight $3$, and level $M$ such that $ L(s,f)=L(s, E, \sym^3)$.
	Moreover, the conductor $N$ of $E$ and the level $M$ of $f$ are given by
	\begin{equation*}
	N= \prod_{p \mid \Delta}p^i \quad\text{ and }	\quad M= \prod_{p \mid \Delta}p^k,
	\end{equation*}
where the values of $i,k$ are given in the Table \ref{table for i,k} for $p$ dividing the discriminant $\Delta$ of \eqref{W.E of EC}.
\end{corollary}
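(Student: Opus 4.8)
The plan is to deduce Corollary~\ref{global corollary1} directly from Theorem~\ref{global theorem} together with the paramodular newform theory already invoked in the proof of Corollary~\ref{RS-classical}. First I would observe that Theorem~\ref{global theorem} produces a cuspidal automorphic representation $\Pi=\sym^3(\pi)$ of $\GSp(4,\A_{\Q})$ with trivial central character, all of whose finite components $\Pi_p$ are generic and whose archimedean component $\Pi_\infty$ is a holomorphic discrete series representation with minimal $K$-type $(3,3)$; this is exactly a representation of type $\mathbf{(G)}$ in the sense of \cite{Ralf2018}. I would then repeat verbatim the argument of Corollary~\ref{RS-classical}: by Theorem~4.4.1 of \cite{RobertsSchmidt2007} each $\Pi_p$ has a paramodular vector, by Theorem~7.5.4 and Corollary~7.5.5 of \cite{RobertsSchmidt2007} the minimal paramodular level of $\Pi_p$ equals the local conductor $a(\Pi_p)$ and the space of newvectors there is one-dimensional, and since $a(\Pi)=\prod_p a(\Pi_p)$ one obtains a paramodular newform $f$ of level $M=a(\Pi)$ attached to $\Pi$ with $L(s,f)=L(s,\Pi)$ (see \cite{Ralf2017}). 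The minimal $K$-type $(3,3)$ at the archimedean place translates, via the dictionary between holomorphic discrete series of $\GSp(4,\R)$ and Siegel modular forms, into weight $\det^{3}$, i.e.\ scalar weight $3$ and degree $2$; this is the $k=2$ case of the weight formula $\det^{k+1}\times\sym^{k-2}$ from Corollary~\ref{RS-classical}.

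Next I would pin down the $L$-function identity. By part~(iii) of Theorem~\ref{global theorem} we have $L(s,\Pi)=L(s,\pi,\sym^3)$, and since $\pi$ is the automorphic representation attached to $E/\Q$ we have $L(s,\pi,\sym^3)=L(s,E,\sym^3)$ by definition. Combining with $L(s,f)=L(s,\Pi)$ gives $L(s,f)=L(s,E,\sym^3)$, which is the asserted $L$-function statement. Since $E$ is non-CM, $\pi$ is not dihedral, so $\sym^3(\pi)$ is cuspidal (as established in the proof of Theorem~\ref{RS}), hence $f$ is genuinely a cusp form and not a lift from a smaller group.

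Finally, for the explicit formulas for $N$ and $M$, I would note that $N=a(\pi)=\prod_p p^{a(\pi_p)}$ and $M=a(\Pi)=\prod_p p^{a(\sym^3(\pi_p))}$, so it suffices to read off the local conductor pairs $\bigl(a(\pi_p),a(\sym^3(\pi_p))\bigr)$ for every prime $p\mid\Delta$. For $p$ of good reduction $a(\pi_p)=0$, so only primes dividing $\Delta$ contribute. For $p\mid N$ the possible values are exactly those tabulated: potentially multiplicative reduction gives the multiplicative row ($i=1$, $k=3$, from Table~\ref{conductor of sym^3 for potentially multiplicative}) or the potentially-multiplicative-additive rows ($i\ge 2$, $k=2i$); additive potentially good reduction at $p\ge5$ gives $(i,k)=(2,4)$ or $(2,2)$ according to whether $e\in\{2,4,6\}$ or $e=3$ (Table~\ref{conductor of sym^3 for p>3}); and additive potentially good reduction at $p=3$ gives the values listed in Table~\ref{conductor of sym^3 for p=3}, which match the $p=3$ rows of Table~\ref{table for i,k} case by case according to which of ${\rm P}_2,{\rm P}_3,{\rm P}_4,{\rm P}_6,{\rm S}_3,{\rm S}_4,{\rm S}_6,{\rm S}_6',{\rm S}_6''$ the curve satisfies. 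Since the hypothesis that $E$ has good or potentially multiplicative reduction at $p=2$ excludes the additive-at-$2$ cases, every prime falls into one of the rows of Table~\ref{table for i,k}, giving $N=\prod_{p\mid\Delta}p^i$ and $M=\prod_{p\mid\Delta}p^k$.

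The only genuine subtlety — and the step I expect to require the most care — is the bookkeeping in this last paragraph: one must verify that the three conductor tables are exhaustive (every reduction type of $E/\Q_p$ is covered, using the residual-characteristic-$3$ analysis of Section~\ref{Section 2} and the classical $p\ge5$ results) and that the entries are correctly collated into the consolidated Table~\ref{table for i,k}; but this is precisely the content already assembled in the proof of Theorem~\ref{global theorem}, so Corollary~\ref{global corollary1} follows formally.
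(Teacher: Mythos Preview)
Your proposal is correct and follows essentially the same approach as the paper: the paper states Corollary~\ref{global corollary1} as an immediate consequence of Theorem~\ref{global theorem} together with the paramodular newform correspondence already established in Corollary~\ref{RS-classical}, and your argument simply unpacks that one-line deduction in more detail. The extra bookkeeping you describe in the last two paragraphs is precisely what the paper has already absorbed into the proof of Theorem~\ref{global theorem}, so nothing further is needed.
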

\noindent Furthermore, if the conductor of an non-CM elliptic curve $E/\Q$ is given along with the Weierstrass equation, then we have the following refinement of Theorem~\ref{global theorem}.
\begin{corollary}
	\label{global corollary2}
	Let $\pi=\bigotimes_{p}\pi_p$ be the cuspidal automorphic representation of $\GL(2,\A)$ attached to a non-CM elliptic curve $E/\Q$ given by the global minimal Weierstrass equation \eqref{W.E of EC} with coefficients in $\Z$ and the conductor $N$. Suppose that $E$ has good or multiplicative reduction at $p=2$. Then $\Pi=\sym^3(\pi)$ is a cuspidal automorphic representation of $\GSp(4,\A_{\Q})$ with trivial central character, which is unramified at each prime $p$ not dividing $N$ and satisfies the properties (i), (ii), (iii) as in Theorem~\ref{global theorem}. Moreover, the conductor $a\left(\sym^3(\pi)\right)$ of $\sym^3(\pi)$ is given by
	\begin{equation}
	\label{formula of sym3 conductor}
	a\left(\sym^3(\pi)\right)= N\prod_{\substack{p \mid N \\ v_p(\Delta) \not\equiv 0 \ \mathrm{mod}\ 4}}p^2.
	\end{equation}
\end{corollary}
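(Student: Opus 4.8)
The plan is to deduce the formula from Theorem~\ref{global theorem} (or its paramodular restatement, Corollary~\ref{global corollary1}) by a prime‑by‑prime comparison, so that essentially no new representation theory is needed: the cuspidality of $\Pi=\sym^3(\pi)$, its trivial central character, the genericity/holomorphy statements (i)--(iii), and the $L$‑function identity are all already supplied there. Concretely, Theorem~\ref{global theorem} gives
\begin{equation*}
N=\prod_{p\mid\Delta}p^{\,i_p},\qquad a(\sym^3(\pi))=\prod_{p\mid\Delta}p^{\,k_p},
\end{equation*}
where $(i_p,k_p)$ is the pair recorded in Table~\ref{table for i,k} for the reduction type of $E$ at $p$. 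Because the given Weierstrass equation is globally minimal, ``$p\mid\Delta$'' is the same as ``$p\mid N$'' (both mean bad reduction at $p$), so the two products run over $p\mid N$. Hence the corollary is equivalent to the single assertion that, for every $p\mid N$,
\begin{equation*}
k_p-i_p=\begin{cases}2&\text{if }v_p(\Delta)\not\equiv0\ (\mathrm{mod}\ 4),\\0&\text{if }v_p(\Delta)\equiv0\ (\mathrm{mod}\ 4),\end{cases}
\end{equation*}
since this yields $a(\sym^3(\pi))=\Big(\prod_{p\mid N}p^{\,i_p}\Big)\Big(\prod_{p\mid N,\ v_p(\Delta)\not\equiv0\,(4)}p^{2}\Big)=N\prod_{p\mid N,\ v_p(\Delta)\not\equiv0\,(4)}p^{2}$.

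So the whole proof is a walk through the rows of Table~\ref{table for i,k}. First I would treat the (potentially) multiplicative primes: if $p\mid\mid N$ (multiplicative reduction) then $i_p=1$ and $k_p=3$; if $j(E)\notin\Z_p$ with $E$ additive at $p$ (potentially multiplicative), then the standing hypothesis on $p=2$ forces $p\ge3$, so $i_p=2$ and $k_p=2i_p=4$; in both situations $k_p-i_p=2$. Next, additive but potentially good reduction with $p\ge5$: by Theorem~\ref{principal series} or Theorem~\ref{super cuspidal}, $\pi_p$ is $\chi\times\chi^{-1}$ with $a(\chi)=1$, or a dihedral supercuspidal over the unramified quadratic extension with $a(\xi)=1$, and Table~\ref{table for i,k} gives $i_p=2$, with $k_p=4$ unless $e:=12/\gcd(v_p(\Delta),12)=3$, in which case $k_p=2$. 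The only point to check is that $e=3$ is equivalent to $v_p(\Delta)\equiv0\ (\mathrm{mod}\ 4)$: a minimal Weierstrass equation with additive potentially good reduction has $v_p(\Delta)\in\{2,3,4,6,8,9,10\}$ (Tate's algorithm / the Kodaira type list), and on this set $\gcd(v_p(\Delta),12)=4$ exactly when $v_p(\Delta)\in\{4,8\}$, i.e.\ exactly when $4\mid v_p(\Delta)$. Hence $k_p-i_p=0\iff v_p(\Delta)\equiv0\ (\mathrm{mod}\ 4)$ for $p\ge5$.

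The residual characteristic $3$ case requires the most care, and it is where I expect the real (purely bookkeeping) work to lie. Here I would use Corollary~\ref{corollary for Q3 in PS} and Corollary~\ref{corollary for Q3 for sc} to identify $\pi_3$, hence the relevant row of Table~\ref{table for i,k}, from the condition among $\{{\rm P}_2,{\rm P}_3,{\rm P}_4,{\rm P}_6,{\rm S}_3,{\rm S}_4,{\rm S}_6,{\rm S}_6',{\rm S}_6''\}$ that $E$ satisfies. Reading the $p=3$ rows of Table~\ref{table for i,k} (equivalently, comparing the $a(\pi_3)$ and $a(\sym^3(\pi_3))$ columns of Table~\ref{conductor of sym^3 for p=3}) shows $k_3-i_3=0$ precisely for ${\rm P}_3$ and ${\rm S}_3$ and $k_3-i_3=2$ in every other case. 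It then remains to match this with Table~\ref{IndexNeron}: ${\rm P}_3$ and ${\rm S}_3$ are exactly the conditions with N\'eron types ${\rm II}$ and ${\rm II}^{*}$, whose admissible discriminant exponents are $v_3(\Delta)\in\{4,12\}$, both divisible by $4$, whereas every other condition in Table~\ref{IndexNeron} has all of its admissible $v_3(\Delta)$ values in $\{3,5,6,7,9,10,11,13\}$, none divisible by $4$. This gives $k_3-i_3=0\iff v_3(\Delta)\equiv0\ (\mathrm{mod}\ 4)$ and finishes the verification. The hard part is thus organizational — correctly pairing the many lines of Table~\ref{IndexNeron} with the corresponding entries of Tables~\ref{conductor of sym^3 for p=3} and \ref{table for i,k}, and justifying the restricted exponent list $\{2,3,4,6,8,9,10\}$ for $p\ge5$ — rather than conceptual; the hypothesis that $E$ has good or multiplicative (not merely potentially multiplicative) reduction at $p=2$ is used only to exclude the $p=2$ rows of Table~\ref{table for i,k} with $k_p-i_p\in\{4,6\}$.
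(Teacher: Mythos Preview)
Your approach mirrors the paper's proof exactly: both reduce \eqref{formula of sym3 conductor} to the dichotomy ``$k_p=i_p\Leftrightarrow v_p(\Delta)\equiv0\pmod4$ and $k_p=i_p+2$ otherwise'' and then verify it by walking through the rows of Table~\ref{table for i,k} (and, for $p=3$, matching them against Table~\ref{IndexNeron}). The organizational points you flag---pairing the $p=3$ conditions with their discriminant exponents, and pinning down the admissible $v_p(\Delta)$ for $p\ge5$---are precisely the ones the paper handles, in the same way.

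There is, however, a gap in your treatment of the potentially multiplicative primes, and the paper's own proof shares it. You record that $k_p-i_p=2$ whenever $E$ has multiplicative reduction at $p$ (so $i_p=1$, $k_p=3$) or additive potentially multiplicative reduction at $p\ge3$ (so $i_p=2$, $k_p=4$), but you never verify that $v_p(\Delta)\not\equiv0\pmod4$ in these rows---and it need not be. For N\'eron type ${\rm I}_n$ one has $v_p(\Delta)=n$, and for ${\rm I}_n^*$ at $p\ge5$ one has $v_p(\Delta)=n+6$, so types ${\rm I}_4,{\rm I}_8,\dots$ and ${\rm I}_2^*,{\rm I}_6^*,\dots$ all give $v_p(\Delta)\equiv0\pmod4$ while $k_p-i_p=2$. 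Concretely, the non-CM curve $15{\rm a}1$ has good reduction at $2$ and ${\rm I}_4$ at both $3$ and $5$, so Table~\ref{conductor of sym^3 for potentially multiplicative} gives $a(\sym^3(\pi))=3^3\cdot5^3$, whereas \eqref{formula of sym3 conductor} would yield $15$. Thus the biconditional you are checking is valid only among the additive \emph{potentially good} primes; as written, the formula needs the condition $j(E)\in\Z_p$ (potentially good reduction) inserted into the product, and no amount of case-checking against Table~\ref{table for i,k} will close the gap without it.
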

\begin{proof}
	Here, we only need to prove the formula \eqref{formula of sym3 conductor} for $a\left(\sym^3(\pi)\right)$. Note that  \eqref{formula of sym3 conductor} is a consequence of the following facts from Table~\ref{table for i,k}:
	\begin{equation}
	\label{relation between i,k}
	\begin{split}
	k=i  &\quad\text{ if and only if }\quad v_p(\Delta) \equiv 0 \text{ mod }4. \\
	 k=i+2  &\quad\text{ if and only if }\quad v_p(\Delta) \not\equiv 0 \text{ mod }4.
	\end{split}
	\end{equation}
	To see this, note that $k=i$ in Table~\ref{table for i,k} precisely in the following cases:
	\begin{enumerate}
		\item $j(E)\in \Z_p$ and $e=3$ for $p\ge 5$
		\item $j(E)\in \Z_3$ and  $E$ satisfies ${\rm P_3\ or\ S_3}$ from Table~\ref{IndexNeron}.
	\end{enumerate}
	Now, $e=\frac{12}{\left(v_p(\Delta),12\right)}=3$ if and only if $v_p(\Delta) \equiv 0 \text{ mod }4$. Also, from Table~\ref{IndexNeron}, we see that $E$ satisfies ${\rm P_3\ or\ S_3}$ if and only if $v_p(\Delta) \equiv 0 \text{ mod }4$. Hence, $k=i$ if and only if $v_p(\Delta) \equiv 0 \text{ mod }4$. Moreover, observe that $k=i+2$ in Table~\ref{table for i,k} for all other cases except for the case when $E$ has additive but potentially multiplicative reduction at $p=2$. Since we have omitted this case in the statement of the corollary, we get $k=i+2$ if and only if $v_p(\Delta) \not\equiv 0 \text{ mod }4$. This proves \eqref{relation between i,k} and hence proves  \eqref{formula of sym3 conductor}.
\end{proof}
\begin{proof}[\textbf{Proof of Theorem~\ref{theorem in intro}}]
Most parts of Theorem~\ref{theorem in intro} including the statement (i) follow from Corollary~\ref{global corollary2}. Let $\pi=\otimes \pi_p$ be the cuspidal automorphic representation of $\GL(2,\A_{\Q})$ associated to $E/\Q$. The assertion (ii) follows from Table~\ref{table of local data for p>5} and \ref{table of local data for p=3}, and the fact that $L_p(s,f)=L(s,\sym^3(\pi_p))$. One can easily see the assertion (iii) using Table~\ref{table of local data for p>5} and \ref{table of local data for p=3}, and the fact that the Atkin-Lehner eigenvalue of $f$ at a finite place $p$ is equal to $\varepsilon\left(\frac 12,\sym^3(\pi_p)\right)$. Also, observe that $w(E/\Q_3)=\varepsilon(\frac12,\pi_3)$, where the local root number $w(E/\Q_3)$ can be computed from \cite{Kobayashi2002}.
\end{proof}

\vspace{0.1in}
\textbf{Acknowledgements.} I would like to thank Ralf Schmidt for guiding me patiently throughout this work. 
I am grateful to Cris Poor and David Yuen for helpful discussions. I would also like to thank the referee for the detailed comments and suggestions.
\bibliographystyle{acm}
\bibliography{Paramodular_EC_ref.bib}
\vspace{5ex}
\noindent Department of Mathematics, Fordham University, Bronx, New York 10458, USA.

\noindent E-mail address: {\tt manami.roy.90@gmail.com}
\end{document}